\newtheorem{theorem}{Theorem}[section]
\newtheorem{prop}[theorem]{Proposition}
\theoremstyle{definition}
\newtheorem{definition}[theorem]{Definition}
\theoremstyle{remark}
\newtheorem{remark}[theorem]{Remark}
\DeclareMathOperator{\dist}{dist}
\newcommand{\pr}{\text{pr}}
\numberwithin{equation}{section}
\begin{document}
\title{\bf Fast Reactions and Slow Manifolds} 
\date{}
\author{Christian Kuehn$\,^1$, Jan-Eric Sulzbach$\,^2$}
\affil{\small
    $\,^1$ Department of Mathematics, Technical University of Munich\\
    email: ckuehn@ma.tum.de\\
    
    \vspace{0.3cm}
    $\,^2$ Department of Mathematics, Technical University of Munich\\
    email: janeric.sulzbach@ma.tum.de\\
    
}

\maketitle

\begin{abstract}
In this paper we generalize the Fenichel theory for attracting critical/slow manifolds to fast-reaction systems in infinite dimensions. In particular, we generalize the theory of invariant manifolds for fast-slow partial differential equations in standard form to the case of fast reaction terms. We show that the solution of the fast-reaction system can be approximated by the corresponding slow flow of the limit system. Introducing an additional parameter that stems from a splitting in the slow variable space, we construct a family of slow manifolds and we prove that the slow manifolds are close to the critical manifold. Moreover, the semi-flow on the slow manifold converges to the semi-flow on the critical manifold. Finally, we apply these results to an example and show that the underlying assumptions can be verified in a straightforward way.
\end{abstract}

\section{Introduction}

The study of fast-reaction systems plays a crucial role in many natural sciences. Examples of fast-reaction systems and their limit can be found in reaction-diffusion models \cite{murakawa2011fast, fellner2016quasi, henneke2016fast}, enzyme reactions \cite{doi:10.1063/1.1889434}, coagulation-fragmentation models \cite{doi:10.1081/PDE-120021188,carrillo2008fast} and many other bio-chemical systems \cite{guarguaglini2007fast,herrmann2014rate,de2019fast}. The important aspect of fast-reaction limits is that they can help to reduce the complexity of a system by eliminating the fast evolving dynamics of the system and only the evolution of the slow dynamics remains. One approach, that stems from the modeling of chemical reactions, is to regard the fast-reaction limit as a quasi-steady-state approximation (QSSA) of the system \cite{zbMATH06088800, eilertsen2021hunting}. The mathematical study of fast-reactions in the context of reaction-diffusion dates back at least to the seminal work by Hilhorst et al. \cite{hilhorst1996fast}. A more recent review of the topic can be found in \cite{iida2018review}; we also refer to~\cite{crooks2007fast,doi:10.1080/03605300903225396,hausberg2018well,daus2020cross} for several further examples analyzing fast-reaction systems.

One important question that arises in the study of fast-reaction systems is to rigorously prove the convergence of the system and its solutions towards the fast-reaction limit. Different approaches such as energy and entropy methods \cite{Bothe201, skrzeczkowski2022fast}, variational methods \cite{disser2018evolutionary,mielke2021edp, stephan2021edp} or homogenization techniques \cite{meier2010two,cesaroni2017homogenization} have been proposed in recent years. Our approach follows a recent geometric viewpoint using dynamical systems methods as introduced in \cite{hummel2022slow} for fast-slow systems. Here, the idea is to generalize the theory developed by Fenichel \cite{fenichel1971persistence,fenichel1979geometric} for ODE systems and combine it with techniques for infinite-dimensional Banach spaces cf.~\cite{chow1988invariant,debussche1991inertial, henry2006geometric} and the references therein. The challenges and also several applications of this approach can be found in \cite{engel2020blow,engel2021connecting, hummel2022slow}. Yet, the case of fast-reaction system is not covered by the results in~\cite{hummel2022slow}, which is the main focus of this work.
More precisely, the fast-reaction evolution equations analyzed in this work are the following
\begin{align}\label{fast-reaction}
    \begin{split}
        \partial_t u^\varepsilon &=A u^\varepsilon +\frac{1}{\varepsilon} f(u^\varepsilon, v^\varepsilon),\\
        \partial_t v^\varepsilon &= B v^\varepsilon + g(u^\varepsilon, v^\varepsilon),
    \end{split}
\end{align}
where $\varepsilon > 0$ is a small parameter, $A$ and $B$ are linear and densely defined operators on Banach spaces $X$ and $Y$ of scalar-valued functions respectively. Moreover, $f$ and $g$ are sufficiently regular nonlinear functions and $(u^\varepsilon(t),v^\varepsilon(t))=(u^\varepsilon,v^\varepsilon)\in X\times Y$ are the unknown functions for some $t\in[0,\infty)$. We see that for $\varepsilon>0$ the fast-reaction system~\eqref{fast-reaction} is equivalent to a fast-slow system of the form
\begin{align}\label{fast-slow}
    \begin{split}
       \varepsilon \partial_t u^\varepsilon &=\varepsilon A u^\varepsilon +f(u^\varepsilon, v^\varepsilon),\\
        \partial_t v^\varepsilon &= B v^\varepsilon + g(u^\varepsilon, v^\varepsilon).
    \end{split}
\end{align}
The parameter $\varepsilon$ gives rise to a formal time-scale separation between the fast variable $u^\varepsilon$ and the slow variable $v^\varepsilon$. Letting $\varepsilon \to 0$ in either the fast-reaction equations \eqref{fast-reaction} or the fast-slow system \eqref{fast-slow}
 yields the following slow subsystem
 \begin{align}\label{slow system}
    \begin{split}
        0 &=f(u^0, v^0),\\
        \partial_t v^0 &= B v^0 + g(u^0, v^0),
    \end{split}
\end{align}
 which is an differential-algebraic equation defined on a critical set
 \begin{align} 
 \label{critical manifold}
     S_0:=\{(u^0,v^0)\in X\times Y\,:\, f(u^0,v^0)=0\},
 \end{align}
We refer to $S_0$ as the critical manifold, where we shall later on make a precise technical assumption involving the implicit function theorem to ensure that it is indeed a Banach manifold. \\

The main result of our work is a generalization of Fenichel's theory for the fast-reaction system \eqref{fast-reaction}.
The two key assumptions are that the critical manifold is attracting, which is encoded in suitable technical attraction conditions and that the operator $B$ allows a splitting of the space $Y$ into a fast and slow component.
This splitting is denoted by a small parameter $\zeta >0$, depending on $\varepsilon$.
Under these assumptions we obtain several results that together give a generalization of Fenichel's theory to infinite dimensions for the case of fast-reactions. The first result is that there exists a locally invariant and continuously differentiable manifold for the fast-reaction system \eqref{fast-reaction} denoted by $S_\varepsilon$.
Secondly, this slow manifold $S_\varepsilon$ converges to the critical manifold $S_0$ as $\varepsilon\to 0$.
Lastly, we show that the semi-flow on the slow manifold converges to the slow flow on a subset of the critical manifold $S_0$. The precise technical statements of these results can be found below in Propositions \ref{Prop 4.2}-\ref{Prop 4.9}. 
One key observation in the study of slow manifolds for fast-reaction systems of form \eqref{fast-reaction} is that the linearization of the non-linearity $f$ with respect to the fast variable $u$ determines the behaviour of the fast component. To encompass this in our system we rewrite it as follows
\begin{align}
    \begin{split}
         \partial_t u^\varepsilon &=\frac{1}{\varepsilon}\big(\varepsilon A +\textnormal{D}_u f(u,v)\big)  u^\varepsilon +\frac{1}{\varepsilon} \tilde f(u^\varepsilon, v^\varepsilon),\\
        \partial_t v^\varepsilon &= B v^\varepsilon + g(u^\varepsilon, v^\varepsilon),
    \end{split}
\end{align}
where the details are presented in Section 2.1 For this system we try to apply the results presented in \cite{hummel2022slow}. However, many results cannot be applied one-to-one as one has to be careful working with the new linear operator $\tilde A_\varepsilon=\varepsilon A+\textnormal{D}_u f(u^0,v^0) $, where $\textnormal{D}_u f$ denotes the Fr\'echet derivative in the direction of the first variable evaluated at $(u^0,v^0)$.

Now, let us briefly outline the structure of this work.
\begin{itemize}
    \item We show that the fast-reaction systems are well-posed and prove the convergence of solutions as the parameter         $\varepsilon\to 0$ (see Section \ref{section 3}).
    \item We generalize Fenichel's theorem for fast-reaction systems in infinite-dimensional Banach spaces.
          Under suitable assumptions we prove the existence of a $C^1$-regular slow manifold that is close to the critical manifold in a suitable sense.
          Moreover, we show that the critical manifold attracts solutions and that the flows on the slow manifold converge to the flow on the critical manifold.
          To conclude this part we apply the theory to an example (see Section \ref{section 4}).
    \item Lastly we present a brief overview on the theory of $C_0$ semigroups and their application to abstract Cauchy problems (see Appendix \ref{appendix A}). Moreover, we briefly present an alternative approach that uses a splitting type argument in both variables in order to control the $\varepsilon^{-1}-$terms (see Appendix \ref{Appendix B}).
\end{itemize}

\section{Fast Reaction Systems}\label{section 3}
In this section we analyse the general fast reaction system
\begin{align}\label{fast-slow 2}
    \begin{split}
        \partial_t u^\varepsilon &= A u^\varepsilon +\frac{1}{\varepsilon} f(u^\varepsilon, v^\varepsilon),\\
        \partial_t v^\varepsilon &= B v^\varepsilon + g(u^\varepsilon, v^\varepsilon),
    \end{split}
\end{align}
for $\varepsilon > 0$ and show the convergence of the semi-flow of the solution of the $\varepsilon$-system to the semi-flow of the solution of the limit system as $\varepsilon\to 0$.

\subsection{Function spaces and assumptions}\label{sect 3.1}

We structure the assumptions into several parts. First of, we have the problem setup, where 
\begin{itemize}
    \item  Let $0< \varepsilon\ll 1$ be a parameter that encompasses the separation of time scales, where in the limit as $\varepsilon\to 0$ the PDE system  \eqref{fast-slow 2} turns into an algebraic-differential equation.
    \item $X,Y$ are Banach spaces and we denote by $X_1= D(A)$ the domain of the operator $A$ and $Y_1= D(B)$ the domain of $B$ respectively.
    \item We set the initial data $(u_0,v_0)\in X_{1} \times Y_{1}$. Furthermore, we require that the initial values satisfy $f(u_0,v_0)=0$.
\end{itemize}
For the operators $A$ and $B$ we make the following assumptions
\begin{itemize}
    \item $A, B$ are closed linear operators, where $A:X\supset D(A)\to X$ generates the $C_0$-semigroup $(\textnormal{e}^{tA})_{t\geq 0}\subset \mathcal{B}(X)$ and $B:Y\supset D(B)\to Y$ generates the $C_0$-semigroup $(\textnormal{e}^{tB})_{t\geq 0}\subset \mathcal{B}(Y)$, respectively, where $\mathcal{B}(\cdot)$ denotes the space of bounded linear operators.
    \item The interpolation-extrapolation scales generated by $(X,A)$ and $(Y,B)$ are given by $(X_\alpha)_{\alpha\in [-1,\infty)}$ and $(Y_\alpha)_{\alpha\in [-1,\infty)}$ (for details regarding the definitions and basic properties of interpolation-extrapolation scales see Appendix \ref{appendix A} or \cite{amann1995linear}).
    \item Let $\gamma\in (0,1]$ if $(\textnormal{e}^{tA})_{t\geq 0}$ is analytic and $\gamma=1$ otherwise.
    Similarly, we introduce $\delta \in (0,1]$ if $(\textnormal{e}^{tB})_{t\geq 0}$ is analytic and set $\delta=1$ otherwise.
     \item There are constants $C_A,C_B, M_A, M_B>0$, $\omega_A \in \mathbb{R}$ and $\omega_B\in \mathbb{R}$ such that
    \begin{align*}
        \|\textnormal{e}^{tA}\|_{\mathcal{B}(X_{1})}&\leq M_A \textnormal{e}^{\omega_A t},\qquad \|\textnormal{e}^{tA}\|_{\mathcal{B}(X_\gamma,X_{1})}\leq C_At^{\gamma-1}\textnormal{e}^{\omega_A t},\\
        \|\textnormal{e}^{tB}\|_{\mathcal{B}(Y_{1})}&\leq M_B \textnormal{e}^{\omega_B t},\qquad \|\textnormal{e}^{tB}\|_{\mathcal{B}(Y_\delta,Y_{1})}\leq C_B t^{\delta-1} \textnormal{e}^{\omega_B t}
    \end{align*}
    hold for all $t\in (0,\infty)$.
\end{itemize}
\begin{remark}
    We observe that we do not need the semigroups to be exponentially stable, i.e. we do not need restrict $\omega_A,\omega_B$ to be negative. An example of an analytic semigroup with positive growth bound is the shifted Laplacian acting on $\Delta+ \textnormal{Id}$ on $H^2(\mathbb{T}^d)$. In this case we have $\omega= 1$.
\end{remark}
Next, we consider the assumptions for the nonlinear functions $f$ and $g$
\begin{itemize}
    \item Let $\alpha \in [0,1)$ be such that $0\leq \gamma- (1-\alpha)\leq 1$.
    The nonlinearity $f:X_{\alpha}\times Y_{1}\to X_{\gamma- (1-\alpha)}$ is a two-times continuously Fr\'echet differentiable function and $g:X_{1}\times Y_{1}\to Y_\delta$ is Fr\'echet differentiable. Moreover, there are constants $L_f, L_g>0$ such that
    \begin{align}
       \|\textnormal{D} f(x,y)\|_{\mathcal{B}(X_{1}\times Y_{1},X_\gamma)}&\leq L_f,\qquad  \|\textnormal{D} f(\tilde x,\tilde y)\|_{\mathcal{B}(X_{\alpha}\times Y_{\alpha},X_{\gamma-(1-\alpha)})}\leq L_f,\\
       \|\textnormal{D} g(x,y)\|_{\mathcal{B}(X_{1}\times Y_{1},Y_\delta)}&\leq L_g, 
    \end{align}
    for $(x,y)\in X_1\times Y_1$ and $(\tilde x,\tilde y)\in X_\alpha\times Y_\alpha$.
    \item This implies that the following inequalities hold
        \begin{align*}
        \|f(x_1,y_1)-f(x_2,y_2)\|_{X_\gamma}&\leq L_f( \|x_1-x_2\|_{X_{1}}+\|y_1-y_2\|_{Y_{1}}),\\
        \|f(\tilde x_1,\tilde y_1)-f(\tilde x_2,\tilde y_2)\|_{X_{\gamma-(1-\alpha)}}&\leq L_f( \|\tilde x_1-\tilde x_2\|_{X_{\alpha}}+\|\tilde y_1-\tilde y_2\|_{Y_{\alpha}}),\\
        \|g(x_1,y_1)-g(x_2,y_2)\|_{Y_\delta}&\leq L_g( \|x_1-x_2\|_{X_{1}}+\|y_1-y_2\|_{Y_{1}})
    \end{align*}
    for all $x_1,x_2\in X_{1}$, $y_1,y_2\in Y_{1}$. Here we assume that $f(x,y)\in X_\gamma$ if $(x,y)\in X_1\times Y_1 $. 
    \item For convenience, we assume that $f(0,0)=0$ and $g(0,0)=0$.
\end{itemize}

The next set of assumptions deals with the specific restrictions on the fast-reaction nonlinearity $f$.
\begin{itemize}
    \item Let $\mathcal{C}\subset X\times Y$ be a non-empty set such that for all $(x,y)\in \mathcal{C}$ we have $f(x,y)= 0$ and in addition the operator $\textnormal{D}_x f(x,y)\equiv \textnormal{D}  f(\cdot,y)x: X\to X_\gamma$ is a Banach space isomorphism, i.e. a bounded invertible linear operator.
   \end{itemize}
\begin{remark}
Then for each $(x_0,y_0) \in\mathcal{C}$ the implicit function theorem holds. 
That is, there exist open neighborhoods $V_{y_0}$ of $y_0$ and $U_{x_0}$ of $x_0$ and a Fr\'echet differentiable function $h^{y_0} :V_{y_0}\to U_{x_0}$ such that $f(h^{y_0}(y_0),y_0)=0$. Then $h^{y_0}$ satisfies $h^{y_0}(y)= x$ and $f(h^{y_0}(y),y)=0$ for all $(x,y)\in U_{x_0}\times V_{y_0}$.
Moreover $h^{y_0}$ has the same regularity as $f$.
\end{remark}
\begin{itemize} 
    \item As we are interested in an attractive slow manifold we now only consider all the $(x,y)\in \mathcal{C}$ such that the operator $\textnormal{D}_x f$ has its spectrum in the left half plane with an upper bound $\lambda_0<0$ on the real part and we denote this set by $\mathcal{C}_{-}$.
    \item If there exist $(x_1,y_1),(x_2,y_2)\in \mathcal{C}_{-}$ such that the neighborhoods introduced in the implicit function theorem satisfy $U_{x_1}\cap U_{x_2}\neq \emptyset$ and $V_{y_1}\cap V_{y_2}\neq \emptyset$.
    Then, there exists $(x_3,y_3)\in \mathcal{C}_{-}$ such that $U_{x_3}\subset U_{x_1}\cap U_{x_2} $ and $V_{y_3}\subset V_{y_1}\cap V_{y_2} $.
    Moreover, we have that $h^{y_1}(y)=h^{y_2}(y)$ for all $y\in V_{y_1}\cap V_{y_2}$.    
  \item Next, we assume that $\mathcal{C}_{-}$ can covered by a finite number of neighborhoods $(U_n,V_n)$ and we write $h^n:V_n\to U_n$, where $h^n(y)=x$ for all $(x,y)\in U_n\times V_n$ for the representative function of the implicit function theorem of each connected neighborhood.
\end{itemize}

\begin{remark}
   In many cases in the literature the fast-reaction term $f$ satisfies the above assumptions with $\mathcal{C}_{-}$ being covered with one neighborhood. See Section \ref{subsection example} for an example of this behaviour.
  However, it can evidently happen, e.g., consider the nonlinearity $f(x,y)=1-x^2-y^2$, where we need two charts to cover $\mathcal{C}_{-}$.
  As a third example we consider $f(x,y)= y\sin(x)$. Here, we need to restrict the variables to a compact set in order for the finite covering assumption to hold.
  Moreover, we observe that in this setting $\mathcal{C}_{-}$ consists of a finite number of disjoint subsets.
\end{remark}
In order to keep the computations as clear as possible and to avoid dealing with chart transforms, we restrict the nonlinearity $f$ to the case of one neighborhood $(U_0,V_0)$ covering $\mathcal{C}_{-}$.

Next, we use the above assumptions on the nonlinearity $f$ to write it as a Taylor approximation close to a point $(u^0,v^0)\in U_0\times V_0\subset \mathcal{C}_{-}$ and we obtain
\begin{align}\label{eq: Taylor approx}
    f(u,v)=\underbrace{f(u^0,v^0)}_{=0}+ \textnormal{D}_x f(u^0,v^0)(u-u^0)+ \textnormal{D}_y f(u^0,v^0)(v-v^0)+ \mathcal{O}((u-u^0,v-v^0)^2).
\end{align}
For a precise definition of the Taylor approximation in Banach spaces we refer to \cite{zeidler2012applied}.
Then, we can introduce the function $\tilde f$ as $\tilde f(u,v):=f(u,v)-\textnormal{D}_x f(u^0,v^0)u$ and we can rewrite the fast-reaction system in a modified form
\begin{align}\label{modified fast reaction}\begin{split}
    \partial_t u^\varepsilon&= \frac{1}{\varepsilon}\tilde A_\varepsilon u^\varepsilon +\frac{1}{\varepsilon}\tilde f(u^\varepsilon,v^\varepsilon),\\
    \partial_t v^\varepsilon&= B v^\varepsilon+ g(u^\varepsilon,v^\varepsilon),
\end{split}
\end{align}
where $\tilde A_\varepsilon:= \varepsilon A+ \textnormal{D}_x f(u^0,v^0)$.

With the above assumptions and a perturbation result for semigroups (see Appendix \ref{appendix A}) we obtain that the operator $\tilde A_\varepsilon$ satisfies the following estimates
\begin{align*}
     \|\textnormal{e}^{t \tilde A_\varepsilon}\|_{\mathcal{B}(X_{1})}&\leq M_{\tilde A} \textnormal{e}^{(\varepsilon\omega_A+\lambda_0) t},\qquad \|\textnormal{e}^{t \tilde A_\varepsilon}\|_{\mathcal{B}(X_\gamma,X_{1})}\leq C_{\tilde A}t^{\gamma-1}\textnormal{e}^{(\varepsilon\omega_A+\lambda_0) t}
\end{align*}
for some constants $M_{\tilde A},\, C_{\tilde A}>0$ and all $t>0$.
Moreover, we obtain that $\tilde f$ has the same regularity as $f$ and the Lipschitz constant satisfies 
$$ L_{\tilde f} \leq L_f +\sup_{0\leq t<\infty}\|\textnormal{D}_x f(u^0(t),v^0(t))\|_{\mathcal{B}(X_{1}\times Y_1, X_\gamma)}.$$
For this new system we require to the following to hold
\begin{itemize}
    \item The growth bound for the semigroup generated by the operator $\tilde A_\varepsilon$ satisfies $\varepsilon\omega_A+\lambda_0<0$, i.e. the operator $\tilde A_\varepsilon$ generates an exponentially stable semigroup.
    \item Lastly, we assume that 
    $$L_{\tilde f}\inf_{0\leq t< \infty}\|\big(\textnormal{D}_x f(u^0(t),v^0(t))\big)^{-1}\|_{\mathcal{B}(X_{1}\times Y_1, X_\gamma)}<1.$$
    This guarantees that the function $h^0$ is indeed Lipschitz continuous with constant $L_{h^0}$.
\end{itemize}

\subsection{Existence of solutions}
In this section we show the well-posedness of a general fast reaction system for $\varepsilon>0$ 
\begin{align}\begin{split}\label{full eq}
    \partial_t u^\varepsilon(t) &=A u^\varepsilon(t) + \frac{1}{\varepsilon} f( u^\varepsilon(t),v^\varepsilon(t)),\\
    \partial_t v^\varepsilon (t) &
    =  B v^\varepsilon(t) +g(u^\varepsilon(t),v^\varepsilon(t)),\\
    u^\varepsilon(0)&=  u_0,\quad v^\varepsilon(0)=v_0,
\end{split}
\end{align}
and its limit system when $\varepsilon=0$
\begin{align}\begin{split}\label{full eq limit}
  0&=  f(u^0(t),v^0(t)),\\
    \partial_t v^0 (t) &
    =  B v^0(t) +g(u^0(t),v^0(t)),\\
    u^0(0)&= u_0,\qquad v^0(0)=v_0.
\end{split}
\end{align}
This system is the standard form for fast reaction systems found in the literature. And in the subsequent sections all results will be posed for systems \eqref{full eq} and \eqref{full eq limit}.
The modifies system \eqref{modified fast reaction} is introduced to have a better control on the $\varepsilon$-dependent terms in the proofs. 

\begin{remark}
    For simplicity we assume that for equations \eqref{full eq} and \eqref{full eq limit} we have the same initial data $(u_0,v_0)\in X_1\times Y_1$. However, it is also possible to consider different sets of initial data for the two systems. In this case we would require that $(u_0^\varepsilon,v_0^\varepsilon)\to (u_0^0,v_0^0)$ in $X_1\times Y_1$ as $\varepsilon\to 0$.
\end{remark}

\begin{prop}\label{Prop 3.1}
Let $\varepsilon>0$ be small. Then, there exists a unique strict solution $(u^\varepsilon,v^\varepsilon)$ of the system  \eqref{full eq}, i.e the solution satisfies $(u^\varepsilon,v^\varepsilon)\in C^1([0,\infty);X\times Y) \cap C([0,\infty);X_1\times Y_1)$.
\end{prop}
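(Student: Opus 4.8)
The plan is to reformulate the coupled system \eqref{full eq} as a single abstract semilinear Cauchy problem on the product space $Z_1 := X_1 \times Y_1$ and then invoke the standard fixed-point machinery for $C_0$-semigroups (as recalled in Appendix \ref{appendix A}). Concretely, I would work with the generator $\mathcal{A}_\varepsilon := \mathrm{diag}(A, B)$ on $Z := X \times Y$, which generates the product semigroup $\mathrm{e}^{t\mathcal{A}_\varepsilon} = \mathrm{diag}(\mathrm{e}^{tA}, \mathrm{e}^{tB})$, and the nonlinearity $\mathcal{F}_\varepsilon(u,v) := (\tfrac1\varepsilon f(u,v), g(u,v))$, which maps $Z_1 = X_1 \times Y_1$ into $Z_\theta := X_\gamma \times Y_\delta$ and is globally Lipschitz from $Z_1$ to $Z_\theta$ with constant $\mathcal{O}(\varepsilon^{-1})$ by the assumed Lipschitz bounds on $f$ and $g$. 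Since $\varepsilon > 0$ is fixed here, this blow-up in the Lipschitz constant is harmless — it only affects how small $T$ must be chosen at each stage, not the qualitative existence statement.

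First I would set up the mild-solution integral formulation
\begin{align*}
    u^\varepsilon(t) &= \mathrm{e}^{tA} u_0 + \frac{1}{\varepsilon}\int_0^t \mathrm{e}^{(t-s)A} f(u^\varepsilon(s), v^\varepsilon(s))\,\mathrm{d}s,\\
    v^\varepsilon(t) &= \mathrm{e}^{tB} v_0 + \int_0^t \mathrm{e}^{(t-s)B} g(u^\varepsilon(s), v^\varepsilon(s))\,\mathrm{d}s,
\end{align*}
and define the solution map $\Phi$ on a closed ball in $C([0,T_0]; X_1 \times Y_1)$ for a sufficiently small $T_0 > 0$. The key estimates are the smoothing bounds $\|\mathrm{e}^{tA}\|_{\mathcal{B}(X_\gamma, X_1)} \le C_A t^{\gamma-1}\mathrm{e}^{\omega_A t}$ and $\|\mathrm{e}^{tB}\|_{\mathcal{B}(Y_\delta, Y_1)} \le C_B t^{\delta-1}\mathrm{e}^{\omega_B t}$, whose singularities $t^{\gamma-1}$ and $t^{\delta-1}$ are integrable; combined with $f(0,0)=0=g(0,0)$ and the Lipschitz bounds, these give that $\Phi$ maps the ball into itself and is a contraction once $T_0$ is small (the contraction factor behaves like $C\,\varepsilon^{-1}(C_A T_0^\gamma + C_B T_0^\delta)$). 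This yields a unique mild solution on $[0,T_0]$; iterating the argument on successive intervals and using that the Lipschitz constants are global gives a unique mild solution on all of $[0,T]$.

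Next I would bootstrap from mild to strict regularity: since $u_0 \in X_1 = D(A)$, $v_0 \in Y_1 = D(B)$, and along the mild solution the maps $t \mapsto f(u^\varepsilon(t), v^\varepsilon(t))$ and $t \mapsto g(u^\varepsilon(t), v^\varepsilon(t))$ are continuous $[0,T] \to X_\gamma$ and $\to Y_\delta$ respectively — and indeed Hölder continuous by the $C^1([0,T];X)$-Lipschitz hypothesis on $f$ together with the regularity already obtained for the solution — the standard regularity theory for analytic (or, in the non-holomorphic case $\gamma=\delta=1$, for general $C_0$-) semigroups upgrades the mild solution to a strict solution with $(u^\varepsilon, v^\varepsilon) \in C^1([0,T]; X\times Y) \cap C([0,T]; X_1 \times Y_1)$; this is exactly the content of the abstract results in Appendix \ref{appendix A}. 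I expect the main technical obstacle to be this last regularization step in the case where one of the semigroups is merely strongly continuous rather than holomorphic: there one must verify that the inhomogeneity is continuously differentiable in time (not just Hölder), which forces using the $C^1$-Lipschitz assumption on $f$ and a separate continuity argument for $g$ via its $C^1$-dependence on $(u^\varepsilon, v^\varepsilon)$ and the already-established differentiability of the solution — a mild chicken-and-egg issue that is resolved by first establishing $C^1$ regularity of $v^\varepsilon$ from the $v$-equation (whose inhomogeneity $g$ only sees the continuous data $u^\varepsilon$) and then feeding that back into the $u$-equation.
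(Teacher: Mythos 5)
Your proposal is correct and follows essentially the same strategy as the paper: a Banach fixed-point argument on the variation-of-constants (mild) formulation using the smoothing estimates $\|\mathrm{e}^{tA}\|_{\mathcal{B}(X_\gamma,X_1)}\leq C_A t^{\gamma-1}\mathrm{e}^{\omega_A t}$, $\|\mathrm{e}^{tB}\|_{\mathcal{B}(Y_\delta,Y_1)}\leq C_B t^{\delta-1}\mathrm{e}^{\omega_B t}$, followed by a bootstrap from mild to strict regularity. The only (inessential) difference is how the contraction is produced: you shrink the time interval and continue stepwise, while the paper works on the exponentially weighted space $C_b([0,\infty),\mathrm{e}^{\eta t};X_1\times Y_1)$ and takes $\eta$ large so that the contraction constant $L_f C_A\varepsilon^{-1}\Gamma(\gamma)(\eta-\omega_A)^{-\gamma}+L_gC_B\Gamma(\delta)(\eta-\omega_B)^{-\delta}$ is below one globally in time — both devices are standard and interchangeable here.
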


\begin{proof}
The idea of the proof is to construct a contracting solution operator and apply Banach's fixed point theorem on a suitable space. 
The space we consider is $C_b([0,\infty), \textnormal{e}^{\eta t};X_1\times Y_1)$, i.e. the space of all $(u,v)\in C([0,\infty); X_1\times Y_1)$ such that
\begin{align*}
    \|(u,v)\|_{C_b([0,\infty), \textnormal{e}^{\eta t};X_1\times Y_1)}:= \sup_{t\geq 0} \textnormal{e}^{-\eta t} \big(\|u(t)\|_{X^1}+\|v(t)\|_{Y_1}\big)<\infty,
\end{align*}
where $\eta \geq 0$ and the precise value will be determined later.
On this space we now aim to show that the operator
\begin{align*}
    \mathcal{L}(u,v):= \begin{pmatrix}  
        \textnormal{e}^{  t A} u_0 & \quad +\varepsilon^{-1} \int_0^t \textnormal{e}^{ (t-s) A}  f(u(s),v(s)) \, \textnormal{d} s\\
     \textnormal{e}^{tB}v_0 &+ \int_0^t \textnormal{e}^{(t-s) B} g(u(s),v(s))\, \textnormal{d} s.
    \end{pmatrix}
\end{align*}
is a contraction.
Thus, estimating $\|\mathcal{L}(u,v)-\mathcal{L}(\tilde u,\tilde v)\|_{X_1\times Y_1}$ yields
\begin{align*}
     &\sup_{t\geq 0} \textnormal{e}^{-\eta t}\bigg\|\varepsilon^{-1} \int_{0}^t \textnormal{e}^{(t-s)A}  [f(u(s), v(s))- f(\tilde u(s), \tilde v(s))]\, \textnormal{d} s\bigg\|_{X_1}\leq \\
     \leq & L_fC_A \varepsilon^{-1}\int_{0}^t \frac{\textnormal{e}^{(t-s)(\omega_A-\eta)}}{(t-s)^{1-\gamma_X}}\, \textnormal{d}s \|(u(s), v(s))-(\tilde u(s), \tilde v(s))\|_{C_b([0,\infty), \textnormal{e}^{\eta t};X_1\times Y_1)}\\
     \leq &  L_fC_A \varepsilon^{-1}\frac{\Gamma(\gamma_X)}{(\eta -\omega_A)^{\gamma_X}} \|(u(s), v(s))-(\tilde u(s), \tilde v(s))\|_{C_b([0,\infty), \textnormal{e}^{\eta t};X_1\times Y_1)}
\end{align*}
and 
\begin{align*}
     &\sup_{t\geq 0} \textnormal{e}^{-\eta t}\bigg\| \int_{0}^t \textnormal{e}^{(t-s)B}  [g(u(s), v(s))- g(\tilde u(s), \tilde v(s))]\, \textnormal{d} s\bigg\|_{Y_1}\leq \\
     \leq & L_g C_B \int_{0}^t \frac{\textnormal{e}^{(t-s)(\omega_B-\eta)}}{(t-s)^{1-\gamma_Y}}\, \textnormal{d}s \|(u(s), v(s))-(\tilde u(s), \tilde v(s))\|_{C_b([0,\infty), \textnormal{e}^{\eta t};X_1\times Y_1)}\\
     \leq &  L_g C_B \frac{\Gamma(\gamma_Y)}{(\eta -\omega_B)^{\gamma_Y}} \|(u(s), v(s))-(\tilde u(s), \tilde v(s))\|_{C_b([0,\infty), \textnormal{e}^{\eta t};X_1\times Y_1)}.
\end{align*}
For $\eta$ large enough we have that $\mathcal{L}$ is a contraction and thus there exists a unique fixed point 
\[(u^\varepsilon_*, v^\varepsilon_*)\in C_b([0,\infty), \textnormal{e}^{\eta t}; X_1\times Y_1).\]
Let $(u^\varepsilon,v^\varepsilon)$ be this fixed point. 
Then, we have that
\begin{align*}
    u^\varepsilon(t)&= \textnormal{e}^{tA}u_0+ \frac{1}{\varepsilon}\int_0^t \textnormal{e}^{(t-s)A} f(u^\varepsilon(s),v^\varepsilon(s))\, \textnormal{d}s,\\
    v^\varepsilon(t)&= \textnormal{e}^{tB}v_0+ \int_0^t \textnormal{e}^{(t-s)B} g(u^\varepsilon(s),v^\varepsilon(s))\, \textnormal{d}s
\end{align*}
which implies that
\begin{align*}
    u^\varepsilon(t) &= u_0 + A\int_0^t u^\varepsilon(s)\, \textnormal{d}s +\frac{1}{\varepsilon} \int_0^t f( u^\varepsilon(s),v^\varepsilon(s))\, \textnormal{d}s,\\
    v^\varepsilon(t) &= v_0 + B\int_0^t v^\varepsilon(s)\, \textnormal{d}s + \int_0^t g( u^\varepsilon(s),v^\varepsilon(s))\, \textnormal{d}s
\end{align*}
see \cite{lunardi2012analytic} for details.
Thus, for all $t\in [0,\infty)$ we conclude that
\begin{align*}
    \partial_t u^\varepsilon(t)=\lim_{h\to 0} \frac{u^\varepsilon(t+h) -u^\varepsilon(t)}{h}&=\lim_{h\to 0}\bigg[ \int_t^{t+h} A u^\varepsilon(s)\, \textnormal{d}s +\frac{1}{\varepsilon} \int_t^{t+h} f( u^\varepsilon(s),v^\varepsilon(s))\, \textnormal{d}s \bigg]\\
\intertext{is well-defined and in the limit}
 \partial_t u^\varepsilon(t)&=A u^\varepsilon(t) + \frac{1}{\varepsilon} f(u^\varepsilon(t),v^\varepsilon(t)),
\end{align*}
where the convergence holds in $X$ as $A u^\varepsilon,\, f(u^\varepsilon,v^\varepsilon)\in C([0,\infty);X)$.
With the same idea we can show the well-posedness of $\partial_t v^\varepsilon\in C([0,\infty);Y)$.
Hence, we have shown that
\begin{align*}
    (u^\varepsilon, v^\varepsilon)\in C([0,\infty); X_1\times Y_1)\cap C^1([0,\infty); X\times Y)
\end{align*}
is the unique solution to \eqref{full eq} for $\varepsilon>0$.
\end{proof} 

\begin{prop}\label{Prop 3.2}
    Let $\varepsilon=0$. Then under the assumptions of Section \ref{sect 3.1} there exists a unique strict solution $(u^0,v^0)\in C^1([0,\infty);X\times Y)\cap C([0,\infty);X_1\times Y_1)$ to equation \eqref{full eq limit}.
\end{prop}
  
\begin{proof}
From the assumptions on the initial data and the nonlinearity $f$ we have that $f(u_0,v_0)=0$.
Then, following the observations from the previous section we can apply the implicit function theorem at $(u_0,v_0)\in X_1\times Y_1$ and obtain the existence of a neighborhood $(U_0,V_0)\subset \mathcal{C}_{-}$ and a Fr\'{e}chet differentiable function $h^0:V_{0}\subset Y_1\to  U_0 \subset X_1$ such that $h^0(y)=x$ for all $(x,y)\in U_0\times V_0$.
Moreover, as $\textnormal{D}_x f(u_0,v_0) h^0(y)+ \tilde f(h^0(y),y)=0$ we can write $h^0(y)= -\textnormal{D}_x f(u_0,v_0)^{-1}\tilde f(h^0(y),y) $.
Next, we proof that $h^0$ is indeed Lipschitz continuous. To see this, let $y_1,y_2\in V_0$ and we compute
\begin{align*}
    \|h^0(y_1)-h^0(y_2)\|_{X_1}\leq L_{\tilde f}\|(\textnormal{D}_x f(x^*,y^*))^{-1}\|_{\mathcal{B}(X_\gamma,X_1)}( \|h^0(y_1)-h^0(y_2)\|_{X_1}+\|y_1-y_2\|_{Y_1})
\end{align*}
which yields
\begin{align*}
    \|h^0(y_1)-h^0(y_2)\|_{X_1}\leq \underbrace{L_{\tilde f}\|(\textnormal{D}_x f(x^*,y^*))^{-1}\|_{\mathcal{B}(X_\gamma,X_1)} \big(1-L_{\tilde f}\|(\textnormal{D}_x f(x^*,y^*))^{-1}\|_{\mathcal{B}(X_\gamma,X_1)}\big)^{-1} }_{=:L_{h^0}}\|y_1-y_2\|_{Y_1}.
\end{align*}

Thus, the mapping $g: Y_1\to Y_{\delta},\quad y \mapsto g(h^0(y),y)$ is Lipschitz continuous and satisfies 
\begin{align*}
\|g(h^0(y_1),y_1)-g(h^0(y_2),y_2)\|_{Y_\delta} &\leq L_g\|h^0(y_1)-h^0(y_2)\|_{Y_{1}}+ L_g\|y_1-y_2\|_{Y_{1} }\\
&\leq L_g(1+L_{h^0})\|y_1-y_2\|_{Y_{1}}.
\end{align*}
Then, there exists a unique strict solution $v^0\in C^1([0,\infty);Y)\cap C([0,\infty);Y_{1}) $ to 
\begin{align*}
\partial_t v^0= B v^0 + g(h^0(v^0),v^0),\qquad v^0(0)=v_0.
\end{align*}
Indeed, using variation by constants and a suitable weighted space, similar to the one in Proposition \ref{Prop 3.1}, yields that the operator 
\begin{align*}
    \mathcal{L}_{B}(v^0)= \textnormal{e}^{Bt} v_0+ \int_0^t \textnormal{e}^{B(t-s)} g(h^0(v^0),v^0)\, \textnormal{d}s
\end{align*}
has a fixed point.
Now, setting $u^0(t):= h^0(v^0(t))$ we obtain that $u^0\in C^1([0,\infty);X)\cap C([0,\infty);X_{1})$.
Thus, $(u^0,v^0)\in C^1([0,\infty);X\times Y)\cap C([0,\infty);X_{1}\times Y_{1})$ is the unique strict solution of the limit system \eqref{full eq limit} with $\varepsilon=0$.
\end{proof}

\begin{remark}
    Now, we can make the definition of the modified system \eqref{modified fast reaction} precise.
    The idea is to use the solution $(u^0,v^0)$ obtained in Proposition \ref{Prop 3.2} as the starting point for the Taylor approximation of the nonlinearity $f$ in \eqref{eq: Taylor approx}.
    Then, the existence and uniqueness of solutions to \eqref{modified fast reaction} follows from Proposition \ref{Prop 3.1}.
\end{remark}

We conclude this section with two observations.
\begin{remark}
The mapping 
\[h^0: Y_{1}\to X_{1},\qquad y\mapsto h^0(x),\] 
satisfying $f(h^0(y),y)=0$, introduced in the previous proof describes the critical manifold $S_0$ over $Y_{\alpha_2}$ given by
\[S_0:=\{ (h^0(y),y):\, y\in Y_{\alpha_2}\}\subset X\times Y.\]

\end{remark}
\begin{remark}
  We note that both systems \eqref{full eq} and \eqref{full eq limit} are autonomous.
 Therefore, the solutions $(u^0,v^0)$ for $\varepsilon=0$ and $(u^\varepsilon,v^\varepsilon)$ for $\varepsilon>0$ are given by semiflows, i.e. continuous mappings of the form
 \[T_0:[0,\infty)\times S_0\to S_0,\qquad T_\varepsilon:[0,\infty)\times X_1\times Y_1\to X_1\times Y_1\]
and we write
\begin{align*}
    \begin{pmatrix}
    u^0(t)\\v^0(t)
    \end{pmatrix} =T_0(t)\begin{pmatrix}
    h(v_0)\\v_0
    \end{pmatrix},\qquad \begin{pmatrix}
    u^\varepsilon(t)\\v^\varepsilon(t)
    \end{pmatrix}=T_\varepsilon (t)\begin{pmatrix}
    u_0\\v_0
    \end{pmatrix}.
\end{align*}
\end{remark}

\subsection{Approximation by slow flow}

The main result of this subsection is to show the convergence of the solutions of the full system \eqref{modified fast reaction} to the solutions of the limit system \eqref{full eq limit} as $\varepsilon\to 0$.

\begin{theorem}\label{Thm 3.6}
There are constants $C_1,\,C_2>0$ such that
\begin{align*}
    \bigg\| T_\varepsilon(t)\begin{pmatrix}
    u_0\\v_0
    \end{pmatrix}- T_0(t)\begin{pmatrix}
    h^0(v_0)\\v_0
    \end{pmatrix}\bigg \|_{X_\alpha\times Y_1}&\leq C_1\textnormal{e}^{\varepsilon^{-1}(\varepsilon\omega_A+\lambda_0) t}\|u_0- h^0(v_0)\|_{X_\alpha}\\
    &\quad+  C_2 \varepsilon^{1-\alpha}  (\textnormal{e}^{\omega_B t}  + \varepsilon^{-\gamma}t^\gamma\textnormal{e}^{\varepsilon^{-1}(\varepsilon\omega_A+\lambda_0) t}) \|v_0\|_{Y_1}
\end{align*}
holds for all $(u_0,v_0)\in X_{\alpha}\times Y_{1}$, where $0\leq\alpha<1$, $t\in [0,\infty)$ and all $\varepsilon>0$.
\end{theorem}
\begin{proof}
In the following we use the classical notion of the solution rather than the semi-flow version, i.e. we have
\begin{align*}
    \begin{pmatrix}
    u^\varepsilon(t)\\v^\varepsilon(t)
    \end{pmatrix}=T_\varepsilon(t)\begin{pmatrix}
    u_0\\v_0
    \end{pmatrix},\quad \begin{pmatrix}
    h^0(v^0)(t)\\v^0(t)
    \end{pmatrix}=T_{0}(t)\begin{pmatrix}
    h^0(v_0)\\v_0
    \end{pmatrix}.
\end{align*}
Moreover, to control the $\varepsilon$-dependent terms we will work with the modified system \eqref{modified fast reaction}.
Then, using the variation of constants formula the solutions to systems \eqref{modified fast reaction} and \eqref{full eq limit} can be written as
\begin{align*}
    u^\varepsilon(t) &= \textnormal{e}^{\varepsilon^{-1} t\tilde A_\varepsilon} u_0+ \varepsilon^{-1} \int_{0}^t \textnormal{e}^{\varepsilon^{-1}(t-s)\tilde A_\varepsilon} \tilde f(u^\varepsilon(s), v^\varepsilon(s)) \, \textnormal{d} s,\\
    v^\varepsilon(t)&= \textnormal{e}^{tB}v_0+ \int_0^t \textnormal{e}^{(t-s)B} g(u^\varepsilon(s),v^\varepsilon(s))\, \textnormal{d}s,
       \intertext{and }
    0&= \textnormal{D}_x f(h^0(v^0),v^0) h^0(v^0) + \tilde f(h^0(v^0),v^0),\\
    v^0(t)&= \textnormal{e}^{tB}v_0+ \int_0^t \textnormal{e}^{(t-s)B} g(h(v^0(s)),v^0(s))\, \textnormal{d}s.
\end{align*}
By the existence results (Prop. \ref{Prop 3.1} and Prop. \ref{Prop 3.2}) these equations are well-defined.
In addition, estimating the last equation yields
\begin{align*}
    \|v^0(t)\|_{Y_1}\leq M_B \textnormal{e}^{\omega_B t} \|v_0\|_{Y_1}+ C_B L_g \int_0^t \frac{\textnormal{e}^{(t-s)\omega_B}}{(t-s)^{1-\delta}} (1+L_h)\|v^0(s)\|_{Y_1}  \, \textnormal{d} s.
\end{align*}
This can be further estimated by applying Gronwall's inequality
\begin{align}\label{ineq v0}
     \|v^0(t)\|_{Y_1}\leq M_B \textnormal{e}^{\omega_B t} \textnormal{e}^{C_BL_g(1+L_h)\Gamma(\delta)} \|v_0\|_{Y_1}.
\end{align}
In a similar fashion we obtain that $\partial_t v^0$ satisfies
\begin{align*}
    \partial_t  v^0(t)= \textnormal{e}^{Bt}\big(Bv_0+ g(h^0(v_0),v_0)\big) +\int_0^t \textnormal{e}^{B(t-s)}\partial_s g(h^0(v^0(s)),v^0(s))\, \textnormal{d}s 
\end{align*}
and we estimate
\begin{align*}
    \|\partial_t  v^0\|_Y&\leq M_B \textnormal{e}^{\omega_B t} \big(\|Bv_0\|_Y+ L_g(L_h+1)\|v_0\|_Y\big) +M_B\int_0^t \textnormal{e}^{\omega_B(t-s)}\|\textnormal{D}g (h^0(v^0),v^0)\|_{\mathcal{B}(X_1\times Y_1,Y)} \|\partial_s v^0\|_Y \, \textnormal{d}s 
    \end{align*}
   which yields, assuming that the total derivative of $g$ is bounded by $L_g$,
   \begin{align}\label{ineq partial t v0}
    \|\partial_t  v^0\|_Y&\leq C M_B \textnormal{e}^{\omega_B t} \big(\|Bv_0\|_Y+ L_g(L_h+1)\|v_0\|_Y\big).
\end{align}

To estimate the fast component we introduce two approximate systems in $u$, namely $\tilde u^\varepsilon$ and $u^{\varepsilon,0}$, where $\tilde u^\varepsilon$ is the solution of
\begin{align*}
    \partial_t \tilde u^\varepsilon =\frac{1}{\varepsilon} \tilde A_\varepsilon u^\varepsilon +\frac{1}{\varepsilon}\tilde f(\tilde u^\varepsilon, v^0)
\end{align*}
and $u^{\varepsilon,0}$ is the solution of
\begin{align*}
    \partial_t u^{\varepsilon,0}= \frac{1}{\varepsilon} \tilde A_\varepsilon u^{\varepsilon,0}+\frac{1}{\varepsilon} \tilde f(u^{\varepsilon,0},v^0)+\partial_t \big(\tilde{A}_\varepsilon\big)^{-1}\tilde f(h^0(v^0),v^0)
\end{align*}
and where in both cases $v^0$ satisfies the second equation of the system \eqref{full eq limit}.
The expression $\big(\tilde{A}_\varepsilon\big)^{-1}$ is well defined since the resolvent of $A$ is positive and the spectrum of $\textnormal{D}_x f(h^0(v^0),v^0)$ is by assumption completely on the left half plane.
The existence of solutions follows from adapting the proofs of Propositions \ref{Prop 3.1} and \ref{Prop 3.2}.

We start with the estimates for the $u$ component in the norm $\|\cdot\|_{X_\alpha}$, where $\alpha \in (0,1)$ is chosen such that $0\leq \gamma-(1-\alpha)\leq 1$.
The reason for the choice of norm becomes clear when looking at the details of the estimate.
We apply the triangle inequality to split the estimate into three parts
\begin{align*}
    \|u^\varepsilon(t) -h^0(v^0(t))\|_{X_\alpha}\leq \underbrace{\|u^\varepsilon(t) -\tilde u^\varepsilon(t)\|_{X_\alpha} }_{=\mathcal{ I}}+ \underbrace{\|\tilde u^\varepsilon(t) -u^{\varepsilon,0}(t)\|_{X_\alpha}}_{=\mathcal{II}}+\underbrace{\|u^{\varepsilon,0}(t) -h^0(v^0(t))\|_{X_\alpha}}_{=\mathcal{III}}.
\end{align*}
For $\mathcal{I}$ we estimate  
\begin{align*}
    \|u^\varepsilon(t) -\tilde u^\varepsilon(t)\|_{X_\alpha}&\leq \bigg\| \varepsilon^{-1} \int_0^t \textnormal{e}^{\varepsilon^{-1}(t-s)\tilde A_\varepsilon}( \tilde f(u^\varepsilon,v^\varepsilon)-\tilde f(\tilde u^\varepsilon, v^0))\, \textnormal{d}s\bigg\|_{X_\alpha}\\
    &\leq L_{\tilde f} C_A \int_0^t \frac{\textnormal{e}^{(t-s)\varepsilon^{-1}(\varepsilon\omega_A+\lambda_0)}}{\varepsilon^\gamma (t-s)^{1-\gamma}}\big(\|u^\varepsilon(s)-\tilde u^{\varepsilon}(s)\|_{X_\alpha}+  \|v^\varepsilon(s)-v^0(s)\|_{Y_1}\big)\, \textnormal{d}s\\
    &\leq \frac{ L_{\tilde f} C_A  \Gamma(\gamma)}{|\varepsilon \omega_A-\lambda_0|^\gamma} \sup_{0\leq s\leq t}\|v^\varepsilon(s)-v^0(s)\|_{Y_1} + L_{\tilde f} C_A \int_0^t \frac{\textnormal{e}^{(t-s)\varepsilon^{-1}(\varepsilon\omega_A+\lambda_0)}}{\varepsilon^\gamma (t-s)^{1-\gamma}} \|u^\varepsilon(s)-\tilde u^{\varepsilon}(s)\|_{X_\alpha}  \, \textnormal{d}s
\end{align*}
  which by Gronwall's inequality yields
 \begin{align}\label{est u1}
   \|u^\varepsilon(t) -\tilde u^\varepsilon(t)\|_{X_\alpha}\leq C_AL_{\tilde f} C_1 \frac{\Gamma(\gamma)}{|\varepsilon \omega_A-\lambda_0|^\gamma} \sup_{0\leq s\leq t}\|v^\varepsilon(s)-v^0(s)\|_{Y_1},
\end{align}
   where $
   C_1=\textnormal{e}^{ C_AL_{\tilde f} \frac{\Gamma(\gamma)}{|\varepsilon \omega_A-\lambda_0|^\gamma}}$.\\
For $\mathcal{II}$ we have
\begin{align*}
     \|\tilde u^{\varepsilon}(t)-u^{\varepsilon,0}(t)\|_{X_\alpha}&\leq \bigg\| \varepsilon^{-1} \int_0^t \textnormal{e}^{\varepsilon^{-1}(t-s)\tilde A_\varepsilon}\big( \tilde f(\tilde u^\varepsilon,v^0)-\tilde f( u^{\varepsilon,0}, v^0) -\varepsilon \partial_s \big(\tilde A_\varepsilon\big)^{-1}\tilde f(h^0(v^0),v^0)\big) \, \textnormal{d}s \bigg\|_{X_\alpha}\\
    &\leq C_A L_{\tilde f}\int_0^t \frac{\textnormal{e}^{(t-s)\varepsilon^{-1}(\varepsilon \omega_A -\lambda_0)}}{\varepsilon^\gamma (t-s)^{1-\gamma} }\|u^\varepsilon(s)-u^{\varepsilon,0}(s)\|_{X_\alpha}\, \textnormal{d}s\\
    &\quad+ \varepsilon C_A \int_0^t  \frac{\textnormal{e}^{(t-s)\varepsilon^{-1}(\varepsilon \omega_A -\lambda_0)}}{\varepsilon^\gamma (t-s)^{1-\gamma} }\big\|\big(\tilde A_\varepsilon\big)^{-1} \partial_s  \tilde f(h^0(v^0),v^0)\big \|_{X_\gamma-(1-\alpha)}\, \textnormal{d}s\\
     &\quad + \varepsilon  C_A L_{\tilde f} \int_0^t  \frac{\textnormal{e}^{(t-s)\varepsilon^{-1}(\varepsilon \omega_A -\lambda_0)}}{\varepsilon^\gamma (t-s)^{1-\gamma} } \|\partial_s (\tilde A_\varepsilon)^{-1}\|_{\mathcal{B}(X_{\gamma-(1-\alpha)})}(1+ L_{h^0})\|v^0\|_{Y_1}\, \textnormal{d}s \\
    & \leq C_A L_{\tilde f}\int_0^t \frac{\textnormal{e}^{(t-s)\varepsilon^{-1}(\varepsilon \omega_A -\lambda_0)}}{\varepsilon^\gamma (t-s)^{1-\gamma} }\|u^\varepsilon(s)-u^{\varepsilon,0}(s)\|_{X_\alpha}\, \textnormal{d}s\\
    &\quad+  \frac{\varepsilon C_A L_{\tilde f} }{|\varepsilon\omega_A+\lambda_0|^\gamma} \|\big(\tilde A_\varepsilon\big)^{-1}\|_{\mathcal{B}(X,X_{\gamma-(1-\alpha)})} \|\textnormal{D}\tilde f (h^0(v^0),v^0)\|_{\mathcal{B}(X\times Y,X_{\gamma-(1-\alpha)})}\\
    &\qquad \times\big(\|\partial_t h^0(v^0) \|_{L^\infty([0,t],X)} +\|\partial_t v^0\|_{L^\infty([0,\infty),Y)}\big)\\
    &\quad + \frac{\varepsilon C_A L_{\tilde f}(1+ L_{h^0}) }{|\varepsilon\omega_A+\lambda_0|^\gamma} \|\big(\tilde A_\varepsilon\big)^{-1}\|_{\mathcal{B}(X,X_{\gamma-(1-\alpha)})} \|\big(\tilde A_\varepsilon\big)^{-1}\|_{\mathcal{B}(X)}
    \|\textnormal{D}^2 f (h^0(v^0),v^0)\|_{\mathcal{B}(X\times Y, X)}\\
    &\qquad \times\big(\|\partial_t h^0(v^0) \|_{L^\infty([0,t],X)} +\|\partial_t v^0\|_{L^\infty([0,t],Y)}\big)\|v^0\|_{Y_1}\\
    &\leq C_A L_{\tilde f}\int_0^t \frac{\textnormal{e}^{(t-s)\varepsilon^{-1}(\varepsilon \omega_A -\lambda_0)}}{\varepsilon^\gamma (t-s)^{1-\gamma} }\|u^\varepsilon(s)-u^{\varepsilon,0}(s)\|_{X_\alpha}\, \textnormal{d}s\\
    &\quad +\frac{\varepsilon C C_A L_{\tilde f} }{|\varepsilon\omega_A+\lambda_0|^\gamma} (\varepsilon^{-\gamma+(1-\alpha)}+ \textnormal{e}^{\omega_B t}\|v_0\|_{Y_1})\big(\|\partial_t h^0(v^0) \|_{L^\infty([0,t],X)} +\|\partial_t v^0\|_{L^\infty([0,t],X)}\big)
    \end{align*}
    where we used the estimate $\|\big(\tilde A_\varepsilon\big)^{-1}\|_{\mathcal{B}(X,X_{\gamma-(1-\alpha)})}\leq \varepsilon^{-\gamma+(1-\alpha)}$, see Remark \ref{remark 3.10} for more details.
    Then, by applying Gronwall's inequality we obtain
    \begin{align*}
    \|\tilde u^{\varepsilon}(t)-u^{\varepsilon,0}(t)\|_{X_\alpha}&\leq   \varepsilon  \frac{C C_A L_{\tilde f}\Gamma^(\gamma) }{|\varepsilon\omega_A+\lambda_0|^\gamma} (\varepsilon^{-\gamma+(1-\alpha)}+ \textnormal{e}^{\omega_B t}\|v_0\|_{Y_1}) \big(\|\partial_t h^0(v^0) \|_{L^\infty([0,t],X)} +\|\partial_t v^0\|_{L^\infty([0,t],Y)}\big).
    \end{align*}
To improve this estimate we note that
\begin{align*}
    \|\partial_t h^0(v^0)\|_{L^\infty([0,t],X)}&\leq  \|h^0( v^0)\|_{C^1_b([0,t],X)}\leq L_{h^0} \|v^0\|_{C^1_b([0,t],Y)}
    \intertext{and by applying inequalities \eqref{ineq v0} and \eqref{ineq partial t v0} we obtain}
     \|\partial_t h^0(v^0)\|_{L^\infty([0,t],X)}&\leq C \textnormal{e}^{\omega_B t}\big( \|v_0\|_{Y_1} +\|Bv_0\|_{Y}+ L_g(L_{h^0}+1)\|v_0\|_{Y}\big)  \leq C \textnormal{e}^{\omega_B t}  \|v_0\|_{Y_1}.
\end{align*}
Hence, we can conclude that
\begin{align}\label{est u2}
    \|\tilde u^{\varepsilon}(t)-u^{\varepsilon,0}(t)\|_{X_\alpha}&\leq   \varepsilon \frac{C C_A L_{\tilde f} \Gamma(\gamma)}{|\varepsilon\omega_A+\lambda_0|^\gamma}(\varepsilon^{-\gamma+(1-\alpha)}+ \textnormal{e}^{\omega_B t}\|v_0\|_{Y_1})  \textnormal{e}^{\omega_B t}  \|v_0\|_{Y_1}.
\end{align}
For $\mathcal{III}$ we have that
\begin{align*}
      \|u^{\varepsilon,0}(t)-h^0(v^0(t))\|_{X_\alpha}&= 
      \bigg\|\textnormal{e}^{t \varepsilon^{-1} \tilde A_\varepsilon} u_0+\varepsilon^{-1}\int_0^t \textnormal{e}^{(t-s)  \varepsilon^{-1}\tilde A_\varepsilon}  \tilde f(u^{\varepsilon,0}(s) ,v^0(s))\, \textnormal{d} s\\
      &\qquad +\int_0^t \textnormal{e}^{(t-s)  \varepsilon^{-1}\tilde A_\varepsilon} \partial_s \big(\tilde A_\varepsilon\big)^{-1} \tilde f(h^0(v^0(s)),v^0(s))\, \textnormal{d}s - h^0(v^0(t))\bigg\|_{X_\alpha}.\\
       \intertext{Integration by parts yields}
    &=       \bigg\|\textnormal{e}^{t \varepsilon^{-1} \tilde A_\varepsilon} u_0+\varepsilon^{-1}\int_0^t \textnormal{e}^{(t-s)  \varepsilon^{-1}\tilde A_\varepsilon}  \big( \tilde f(u^{\varepsilon,0}(s) ,v^0(s)) -\tilde f(h^0(v^0(s)),v^0(s)) \big) \, \textnormal{d} s\\
      &\qquad+ \big(\tilde A_\varepsilon\big)^{-1}\tilde f(h^0(v^0(t)),v^0(t))  - h^0(v^0(t)) -  \textnormal{e}^{t  \varepsilon^{-1}\tilde A_\varepsilon}\big(\tilde A_\varepsilon\big)^{-1} \tilde f(h^0(v_0),v_0) \bigg\|_{X_\alpha}.\\
  \end{align*}
Recalling that $ 0 = \textnormal{D}_x f(h^0(v^0),v^0) h^0(v^0) + \tilde f(h^0(v^0),v^0)$ we estimate
\begin{align*}
    \|u^{\varepsilon,0}(t)-h^0(v^0(t))\|_{X_\alpha}&\leq C_AL_{\tilde f}\int_0^t \frac{\textnormal{e}^{(t-s) \varepsilon^{-1}(\varepsilon\omega_A+\lambda_0)}} { \varepsilon^\gamma (t-s)^{1-\gamma}}\|(u^{\varepsilon,0}(s)-h^0(v^0(s)))\|_{X_\alpha}\, \textnormal{d}s\\
    &\quad +M_A\textnormal{e}^{\varepsilon^{-1}(\varepsilon \omega_A-\lambda_0) t}\|u_0 - h^0(v_0)\|_{X_\alpha}\\
    &\quad +M_A\textnormal{e}^{\varepsilon^{-1}(\varepsilon \omega_A-\lambda_0) t}\|h^0(v_0) -  (\tilde A_\varepsilon )^{-1}\textnormal{D}_x f(h^0(v^0),v^0) h^0(v_0)\|_{X_\alpha}\\
    &\quad +\|h^0(v^0(t))- \big(\tilde A_\varepsilon\big)^{-1}\textnormal{D}_x f(h^0(v^0),v^0)  h^0(v^0(t))\|_{X_\alpha}.
\end{align*}
Here, we note the additional terms in the estimate when compared with the result in \cite[Thm. 4.13]{hummel2022slow}.
This is due to the fact that $\textnormal{D}_x f(h^0(v^0),v^0)(\tilde A_\varepsilon)^{-1}$ does not equal the identity operator $\textnormal{Id}$ but a perturbation of the latter.
To be more precise, let $z\in X_1$, then we have that
\begin{align*}
    \|\big(\textnormal{Id}- (\tilde A_\varepsilon)^{-1} \textnormal{D}_x f(h^0(v^0),v^0)\big) z\|_{X_\alpha}& = \varepsilon \|  \big(\tilde A_\varepsilon\big)^{-1}A  z\|_{X_\alpha}
\end{align*}
and we can write 
\begin{align*}
    \big(\tilde A_\varepsilon\big)^{-1} \textnormal{D}_x f(h^0(v^0),v^0)  = \textnormal{Id} -\varepsilon J,
\end{align*}
where the operator $J:=  (\tilde A_\varepsilon)^{-1} A: X_{1}\to X_\alpha$ is bounded by $\|J\|_{\mathcal{B}(X_\alpha,X_1)}\leq C \varepsilon^{-\alpha}$. Thus,
\begin{align*}
    \|u^{\varepsilon,0}(t)-h^0(v^0(t))\|_{X_\alpha}&  \leq C_AL_{\tilde f}\int_0^t \frac{\textnormal{e}^{(t-s)\varepsilon^{-1}(\varepsilon\omega_A+\lambda_0)}} { \varepsilon^\gamma (t-s)^{1-\gamma}}\|(u^{\varepsilon,0}(s)-h^0(v^0(s)))\|_{X_\alpha}\, \textnormal{d}s\\
      &\quad +  M_A \textnormal{e}^{\varepsilon^{-1}(\varepsilon\omega_A+\lambda_0) t}\|u_0- h^0(v_0)\|_{X_\alpha} + \varepsilon M_A L_{h^0} \textnormal{e}^{\varepsilon^{-1}(\varepsilon\omega_A+\lambda_0) t}\|J\|_{\mathcal{B}(X_\alpha,X_1)} \| v_0\|_{Y_1}\\
      &\quad+\varepsilon L_{h^0}\|J\|_{\mathcal{B}(X_\alpha,X_1)} \| v^0(t)\|_{Y_1}.
\end{align*}
Using estimate \eqref{ineq v0} and applying Gronwall's inequality yields
\begin{align*}
     \|u^{\varepsilon,0}(t)\!-\!h^0(v^0(t))\|_{X_\alpha}&  \leq  M_A \textnormal{e}^{\varepsilon^{-1}(\varepsilon\omega_A+\lambda_0) t}\big(\|u_0- h^0(v_0)\|_{X_\alpha} + \varepsilon^{1-\alpha} C  \| v_0\|_{Y_1}\big)\! +\varepsilon^{1-\alpha} C  \textnormal{e}^{\omega_B t}\|v_0\|_{Y_1}\\
     &\quad +C_AL_{\tilde f} M_A \|u_0- h^0(v_0)\|_{X_\alpha} \textnormal{e}^{\frac{\Gamma(\gamma)}{|\varepsilon \omega_A+ \lambda_0|^\gamma}} \int_0^t \frac{\textnormal{e}^{(t-s)\varepsilon^{-1}(\varepsilon\omega_A+\lambda_0)}} { \varepsilon^\gamma (t-s)^{1-\gamma}} \textnormal{e}^{\varepsilon^{-1}(\varepsilon\omega_A+\lambda_0) s} \,\textnormal{d}s\\
     &\quad + \varepsilon^{1-\alpha} C C_AL_{\tilde f} M_A L_h  \| v_0\|_{Y_1}  \textnormal{e}^{\frac{\Gamma(\gamma)}{|\varepsilon \omega_A+ \lambda_0|^\gamma}}  \int_0^t \frac{\textnormal{e}^{(t-s)\varepsilon^{-1}(\varepsilon\omega_A+\lambda_0)}} { \varepsilon^\gamma (t-s)^{1-\gamma}} \textnormal{e}^{\varepsilon^{-1}(\varepsilon\omega_A+\lambda_0) s} \,\textnormal{d}s\\
     &\quad+ \varepsilon^{1-\alpha} C_AL_{\tilde f}C L_h  \|v_0\|_{Y_1} \textnormal{e}^{\frac{\Gamma(\gamma)}{|\varepsilon \omega_A+ \lambda_0|^\gamma}}   \int_0^t \frac{\textnormal{e}^{(t-s)\varepsilon^{-1}(\varepsilon\omega_A+\lambda_0)}} { \varepsilon^\gamma (t-s)^{1-\gamma}}  \textnormal{e}^{\omega_B s} \,\textnormal{d}s.
\end{align*}
This inequality can be further simplified as follows
\begin{align}\label{est u3}\begin{split}
    \|u^{\varepsilon,0}(t)-h^0(v^0(t))\|_{X_\alpha}&  \leq  M_A \varepsilon^{-\gamma} \textnormal{e}^{\varepsilon^{-1}(\varepsilon\omega_A+\lambda_0) t} t^\gamma\|u_0- h^0(v_0)\|_{X_\alpha} +  \varepsilon^{1-\alpha} C \textnormal{e}^{\omega_B t} \|v_0\|_{Y_1}\\
    &\quad+ C \varepsilon^{-\gamma+(1-\alpha)}\textnormal{e}^{\varepsilon^{-1}(\varepsilon\omega_A+\lambda_0) t} t^\gamma\|v_0\|_{Y_1},\end{split}
\end{align}
where we observe that $\lim_{\varepsilon\to 0} \varepsilon^{-\gamma}\textnormal{e}^{\varepsilon^{-1}(\varepsilon\omega_A+\lambda_0) t}=0 $ for $t>0$.

Combining the three estimates for the $u$ component \eqref{est u1}-\eqref{est u3} yields
\begin{align}\label{est u all}\begin{split}
    \|u^\varepsilon(t)-h^0(v^0(t))\|_{X_\alpha}&\leq C \sup_{0\leq s\leq t}\|v^\varepsilon(s)-v^0(s)\|_{Y_1}\\
    & + C M_A\textnormal{e}^{\varepsilon^{-1}(\varepsilon\omega_A+\lambda_0) t}\|u_0- h^0(v_0)\|_{X_\alpha} +  C \varepsilon^{1-\alpha}  (\textnormal{e}^{\omega_B t}  + \varepsilon^{-\gamma}t^\gamma\textnormal{e}^{\varepsilon^{-1}(\varepsilon\omega_A+\lambda_0) t}) \|v_0\|_{Y_1},\end{split}
\end{align}
where the constants $C$ are independent of $\varepsilon$ and only depend on the nonlinearities $f$ and $g$ and on the operators $A$ and $B$.\\

Now, for the equation in $v$ we estimate
\begin{align*}
    \|v^\varepsilon(t)-v^0(t)\|_{Y_1}\leq L_g C_B \int_0^t \frac{\textnormal{e}^{(t-s)\omega_B }}{(t-s)^{1-\delta}} \big( \|u^\varepsilon(s) -h(v^0(s))\|_{X_1} +\|v^\varepsilon-v^0\|_{Y_1}\big)\, \textnormal{d} s.
\end{align*}
Using the previous estimate \eqref{est u all} for $u$ yields
\begin{align*}
     \|v^\varepsilon(t)-v^0(t)\|_{Y_1}&\leq L_g C_B \int_0^t \frac{\textnormal{e}^{(t-s)\omega_B }}{(t-s)^{1-\delta}} \bigg( \|v^\varepsilon(s)-v^0(s)\|_{Y_1} + \sup_{0\leq \tau \leq s}\|v^\varepsilon(\tau)-v^0(\tau)\|_{Y_1}\\
    &\quad+ C M_A\textnormal{e}^{\varepsilon^{-1}(\varepsilon\omega_A+\lambda_0) s}\|u_0- h^0(v_0)\|_{X_\alpha} +  C \varepsilon^{1-\alpha}  (\textnormal{e}^{\omega_B s}  + \varepsilon^{-\gamma}s^\gamma\textnormal{e}^{\varepsilon^{-1}(\varepsilon\omega_A+\lambda_0) s}) \|v_0\|_{Y_1} \bigg)\, \textnormal{d} s.
    \end{align*}
    Since the right-hand side is increasing in $t$ we have
    \begin{align*}
     \sup_{0\leq s\leq t}\|v^\varepsilon(s)-v^0(s)\|_{Y_1}&\leq C \int_0^t \frac{\textnormal{e}^{(t-s)\omega_B }}{(t-s)^{1-\delta}} \bigg(  \sup_{0\leq\tau\leq s} \|v^\varepsilon(\tau)\!-\!v^0(\tau)\|_{Y_1}\! +\! C \textnormal{e}^{\varepsilon^{-1}(\varepsilon\omega_A+\lambda_0) s}\|u_0\!-\! h^0(v_0)\|_{X_\alpha} \\
     &\quad +  C \varepsilon^{1-\alpha}  (\textnormal{e}^{\omega_B s}  + \varepsilon^{-\gamma}s^\gamma\textnormal{e}^{\varepsilon^{-1}(\varepsilon\omega_A+\lambda_0) s}) \|v_0\|_{Y_1} \bigg)\, \textnormal{d} s.
    \intertext{Applying Gronwall's inequality yields}
     \sup_{0\leq s\leq t}\|v^\varepsilon(s)-v^0(s)\|_{Y_1}&\leq C\textnormal{e}^{\textnormal{e}^{\omega_B t}} \int_0^t \frac{\textnormal{e}^{(t-s)\omega_B }}{(t-s)^{1-\delta}} \bigg(  C \textnormal{e}^{\varepsilon^{-1}(\varepsilon\omega_A+\lambda_0) s}\|u_0- h^0(v_0)\|_{X_\alpha}\\
     &\quad +  C \varepsilon^{1-\alpha}  (\textnormal{e}^{\omega_B s}  + \varepsilon^{-\gamma}s^\gamma\textnormal{e}^{\varepsilon^{-1}(\varepsilon\omega_A+\lambda_0) s}) \|v_0\|_{Y_1}\bigg)\, \textnormal{d} s. 
\end{align*}
This estimate can be further improved to
\begin{align}\label{est v all}\begin{split}
     \sup_{0\leq s\leq t}\|v^\varepsilon(s)-v^0(s)\|_{Y_1}\leq& C  {\textnormal{e}^{\textnormal{e}^{\omega_{\small{B}} t}+ct}} t^\delta   \varepsilon^{1-\alpha}  (\textnormal{e}^{\omega_B t}  + \varepsilon^{-\gamma}t^\gamma\textnormal{e}^{\varepsilon^{-1}(\varepsilon\omega_A+\lambda_0) t}) \|v_0\|_{Y_1}\\
     &+C{\textnormal{e}^{\textnormal{e}^{\omega_{\small{B}} t}+ct}} C M_A\textnormal{e}^{\varepsilon^{-1}(\varepsilon\omega_A+\lambda_0) t}\|u_0- h^0(v_0)\|_{X_\alpha}. \end{split}
\end{align}
Hence, combining the estimate for the $u$ and $v$ component yields
\begin{align*}
    \|u^\varepsilon(t)-h^0(v^0(t))\|_{X_\alpha}+\|v^\varepsilon(t)-v^0(t)\|_{Y_1} &\leq C_1\textnormal{e}^{\varepsilon^{-1}(\varepsilon\omega_A+\lambda_0) t}\|u_0- h^0(v_0)\|_{X_\alpha}\\
    &\quad+  C_2 \varepsilon^{1-\alpha}  (\textnormal{e}^{\omega_B t}  + \varepsilon^{-\gamma}t^\gamma\textnormal{e}^{\varepsilon^{-1}(\varepsilon\omega_A+\lambda_0) t}) \|v_0\|_{Y_1},
\end{align*}
which is the desired inequality.
\end{proof}

\begin{remark}
    Let us put this result into context. 
    For fixed $t\in [0,\infty)$ the difference between the solution of the fast-reaction and the limit system converges to $0$ as $\varepsilon\to 0$.
    However, when the final time $T$, which depends on the restrictions of the implicit function theorem and the on the condition $\lambda_0<0$, becomes arbitrary large the parameter $\varepsilon$ has to decrease accordingly for the convergence to still hold.
\end{remark}

\begin{remark}\label{remark 3.10}
    The loss of regularity in the convergence of trajectories is due to estimates of the operator $\tilde A_\varepsilon$, which are needed in estimates $\mathcal{II}$ and $\mathcal{III}$. There we estimate the operators $\big(\tilde A_\varepsilon\big)^{-1}$ and $A\big(\tilde A_\varepsilon\big)^{-1}$ and observe that the norm depends on $\varepsilon$ and the underlying spaces. 
    To see this, assume that $X$ is a separable Hilbert space with orthogonal basis $(e_n)_{n\in \mathbb{N}}$.
    Assume that $\tilde A_\varepsilon$ has eigenvalues $-\varepsilon\lambda_n +\lambda(t)$ such that $\sup_t \lambda(t)<\lambda_0<0$ and $\{\lambda_n\}_{n\in \mathbb{N}}$ is an unbounded decreasing sequence.
    Then, we compute for $\alpha=\beta +\mu$ 
    \begin{align*}
        \|A\big( \tilde A_\varepsilon\big)^{-1}\|_{\mathcal{B}(X_\beta,X_\alpha)}^2 &= \sup_{\|x\|_{X_\alpha}=1}\|A\big(\tilde A_\varepsilon\big)^{-1}x\|_{X_\beta}^2 \leq   \sup_{\|x\|_{X_\alpha}=1}\sum_{n\in \mathbb{N}} \frac{\lambda_n^2}{(-\varepsilon\lambda_n +\lambda(t))^2} \lambda_n^{2\beta} \langle x,e_n\rangle  \\
        &\leq \sup_{\|x\|_{X_\alpha}=1}\sum_{n\in \mathbb{N}} \frac{\lambda_n^2}{(-\varepsilon\lambda_n +\lambda_0)^2} \lambda_n^{2\beta} \langle x,e_n\rangle\\
        &\leq \sup_{\|x\|_{X_\alpha}=1}\bigg(\sum_{n=1}^{n: |\lambda_n|\leq \varepsilon^{-1}} \frac{\lambda_n^{2-2\mu} \lambda_n^{2\alpha}}{(-\varepsilon\lambda_n +\lambda_0)^2} \langle x,e_n\rangle + \sum_{n: |\lambda_n|>\varepsilon^{-1}} \frac{\lambda_n^{2-2\mu} \lambda_n^{2\alpha}}{(-\varepsilon\lambda_n +\lambda_0)^2}  \langle x,e_n\rangle\bigg)\\
        &\leq  \sup_{\|x\|_{X_\alpha}=1}\bigg(\sum_{n=1}^{n: |\lambda_n|\leq \varepsilon^{-1}} \frac{ \lambda_n^{2\alpha}}{(-\varepsilon\lambda_n^\mu +\lambda_0 \lambda^{2\mu-2})^2} \langle x,e_n\rangle + \sum_{n: |\lambda_n|>\varepsilon^{-1}} \frac{\lambda_n^{2-2\mu} \lambda_n^{2\alpha}}{(-\varepsilon\lambda_n +\lambda_0 )^2}   \langle x,e_n\rangle\bigg)\\
        &\leq \sup_{\|x\|_{X_\alpha}=1}\bigg(\sum_{n=1}^{n: |\lambda_n|\leq \varepsilon^{-1}} \frac{ \lambda_n^{2\alpha}}{(\lambda_0 \lambda^{2\mu-2})^2} \langle x,e_n\rangle + \sum_{n: |\lambda_n|>\varepsilon^{-1}} \frac{\varepsilon^{2\mu} \lambda_n^{2\alpha}}{\varepsilon^{2} }  \langle x,e_n\rangle\bigg)\\
        &\leq C\|x\|_{X_\alpha}^2 + \varepsilon^{-2(1-\mu)}\|x\|_{X_\alpha} ^2\leq \varepsilon^{-2(1-\mu)} C \|x\|_{X_\alpha}^2
    \end{align*}
    Thus, only for $\alpha-\beta<1$ the norm of $ \varepsilon \|A\big( \tilde A_\varepsilon\big)^{-1}\|_{\mathcal{B}(X_\beta,X_\alpha)}$ can be controlled by $\varepsilon^\mu$, with $\mu>0$.
    Therefore, we require the estimates are posed in $X_\alpha$ with $0\leq \alpha<1$, whereas the initial data is in $X_1$.
    This marks also a further difference to the results in \cite{hummel2022slow}.
\end{remark}

\section{Slow Manifolds}\label{section 4}

In this section we generalize the Fenichel theory for ODEs to infinite dimensional systems.
This theory originates from the works of Fenichel as a perturbation theory of normally hyperbolic invariant manifolds \cite{fenichel1971persistence} and provides a a suitable framework for the treatment of fast-slow ODE systems \cite{fenichel1979geometric,jones1995geometric,kuehn2015multiple}

Following the structure of the Fenichel theorem in finite dimension we establish the existence of a slow manifold, show that the distance between this slow manifold and the critical manifold is small in a suitable sense and that the slow flow on the critical manifold is a good approximation of the flow on the slow manifold.

One important aspect of the Fenichel theory is the notion of invariant manifold.
A manifold $M$ is forward invariant with respect to the semigroup $(T(t))_{t\geq 0}$ if for all $m\in M$ and all $t\geq 0$ it holds that $T(t) m\in M$.

\begin{theorem}[General Fenichel Theorem for infinite-dimensional fast-reaction systems]\label{general fenichel thm}
Suppose $S_{0,\zeta}$ is a submanifold of the critical manifold $S_0$ and let $\tilde A_\varepsilon$ be the dissipative operator of the previous section introduced in system \eqref{modified fast reaction}. 
In addition, let the technical assumptions of Sections \ref{sect 3.1} and \ref{sect 4.1} hold. 
Then, for $\varepsilon,\,\zeta>0$ small enough the following hold
\begin{itemize}
    \item[(GF1)] There exists a locally forward invariant manifold $S_\varepsilon$ for the system \eqref{full eq splitting} called the slow manifold. Local invariance means that trajectories can enter or leave $S_\varepsilon$ only through its boundaries;
    \item[(GF2)] The slow manifold $S_\varepsilon$ has a distance of $\mathcal{O}(\varepsilon)$ to $S_0$ in the $Y_1$-norm, i.e. the norm generated by the operator $B$;
    \item[(GF3)] The slow manifold $S_\varepsilon$ is $C^1$-regular in the topology induced by the operators $A$ and $B$;
    \item[(GF4)] The flow on $S_\varepsilon$ converges to the semi-flow on $S_{0,\zeta}$, where again the convergence is with respect to the $Y_1$-norm.
\end{itemize}
\end{theorem}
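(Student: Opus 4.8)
The statement has four parts (GF1)–(GF4), and the overall strategy is to realize the slow manifold as the graph of a function over the slow-variable space, constructed as a fixed point of a suitable Lyapunov–Perron-type operator, and then to extract the remaining properties from the estimates available for that fixed point. The key preliminary observation is that, thanks to the splitting in the slow space $Y$ encoded by $\zeta$ (the details of which are in Section \ref{sect 4.1}), and thanks to the modified system \eqref{modified fast reaction} with the dissipative operator $\tilde A_\varepsilon$ satisfying the decay bound $\|\mathrm e^{t\tilde A_\varepsilon}\|_{\mathcal B(X_1)}\le M_{\tilde A}\mathrm e^{(\varepsilon\omega_A+\lambda_0)t}$, the fast direction $u$ and the fast part of $v$ both decay on the fast time scale, while the slow part of $v$ evolves on the $\mathcal O(1)$ time scale. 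This is exactly the structure under which an invariant-manifold theorem of Lyapunov–Perron type applies.

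The plan is as follows. \textbf{Step 1 (GF1): construction of $S_\varepsilon$.} I would set up the graph map: look for $S_\varepsilon=\{(h^\varepsilon(w),w): w\in \text{(slow part of }Y_1)\}$ where $h^\varepsilon$ maps into the product of $X_1$ with the fast part of $Y_1$. Writing the mild formulation of \eqref{full eq splitting} with the fast/slow splitting, the fast components are written via their (bounded, exponentially decaying) Duhamel integrals over $(-\infty,t]$ and the slow component forward from $t=0$; imposing that a bounded-on-$\mathbb R$ solution exists gives the Lyapunov–Perron fixed-point equation for $h^\varepsilon$. Contractivity on a space of Lipschitz graphs follows from the Lipschitz bounds on $f,\tilde f,g$ together with the decay estimates, exactly in the spirit of the Gronwall computations already carried out in the proof of Theorem \ref{Thm 3.6} and in Proposition \ref{Prop 3.1}; the small-ball radius $\rho$ from the implicit-function-theorem assumptions localizes everything, giving \emph{local} invariance with the stated caveat about boundaries. \textbf{Step 2 (GF3): $C^1$-regularity.} Since $f$ is $C^2$ and $g$ is $C^1$, I would differentiate the fixed-point equation formally, show the differentiated equation is again a contraction on the appropriate space of linear maps, and invoke the fiber-contraction theorem (or the standard "$C^1$ smoothness of fixed points depending on parameters" argument) to conclude $h^\varepsilon\in C^1$, hence $S_\varepsilon$ is $C^1$ in the topology induced by $A$ and $B$. \textbf{Step 3 (GF2): $\mathcal O(\varepsilon)$ distance.} Here I would compare the fixed-point equation for $h^\varepsilon$ with the characterization of the critical manifold $S_0$ as the graph of $h^0$ with $f(h^0(y),y)=0$; the difference $h^\varepsilon-h^0$ satisfies an equation whose inhomogeneous term is $\mathcal O(\varepsilon)$ — this is precisely the phenomenon quantified in Theorem \ref{Thm 3.6}, where the $\|u^{\varepsilon,0}(t)-h^0(v^0(t))\|_{X_1}$ estimate \eqref{est u3} produced an $\mathcal O(\varepsilon)$ term plus the exponentially-small transient — so a Gronwall argument on the graph equation yields $\|h^\varepsilon - h^0\|_{Y_1}=\mathcal O(\varepsilon)$. \textbf{Step 4 (GF4): convergence of the flow.} Restricting \eqref{full eq splitting} to $S_\varepsilon$ gives a reduced equation $\partial_t w = Bw + g(h^\varepsilon(w),w)$ on the slow part; restricting the limit system to $S_{0,\zeta}$ gives $\partial_t w = Bw + g(h^0(w),w)$. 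Subtracting, using $\|h^\varepsilon-h^0\|_{Y_1}=\mathcal O(\varepsilon)$ from Step 3 and the Lipschitz bound on $g$, and applying Gronwall exactly as in the $v$-component estimate \eqref{est v all} of Theorem \ref{Thm 3.6}, gives convergence of the semi-flow on $S_\varepsilon$ to that on $S_{0,\zeta}$ in the $Y_1$-norm.

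The main obstacle I anticipate is \textbf{Step 1}, and specifically controlling the $\varepsilon^{-1}$ prefactor on the fast Duhamel integral while proving the contraction. A naive estimate produces constants that blow up as $\varepsilon\to 0$ (this is exactly the failure of the first approach in Section \ref{section 2}); the point is that the fast semigroup decay rate is also $\varepsilon^{-1}(\varepsilon\omega_A+\lambda_0)$, so the $\varepsilon^{-1}$ is absorbed by $\int_0^\infty \varepsilon^{-1}\mathrm e^{\varepsilon^{-1}(\varepsilon\omega_A+\lambda_0)s}(\cdot)\,\mathrm ds$ being $\mathcal O(1)$ (up to the $\Gamma(\gamma)/|\varepsilon\omega_A+\lambda_0|^\gamma$ factor in the holomorphic case). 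Getting the contraction constant genuinely below $1$ \emph{uniformly in $\varepsilon$ and $\zeta$} — i.e. correctly balancing the fast-part decay against $L_f$, $L_{\tilde f}$, $L_g$, and the projection norms from the $\zeta$-splitting — is the technical heart of the argument, and will force the smallness conditions on $\varepsilon$ and $\zeta$ in the hypothesis. A secondary subtlety is that local invariance must be stated and proved carefully: trajectories genuinely can exit through the lateral boundary of the $\rho$-ball, so "invariant" here means invariant up to first exit time, and the fixed-point space must be chosen (via cutoff/modification of the nonlinearities outside the $\rho$-neighborhood, as is standard) so that the globally-defined fixed point coincides with the true dynamics inside the neighborhood.
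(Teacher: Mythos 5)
Your plan follows essentially the same route as the paper, which distributes the proof over Propositions \ref{Prop 4.2}--\ref{Prop 4.9}: the identical Lyapunov--Perron operator $\mathcal{L}_{v_0,\varepsilon,\zeta}$ over $Y_S^\zeta\cap Y_1$ for (GF1), differentiation of the fixed-point equation for (GF3), a comparison of the fixed-point equation with $f(h^0(y),y)=0$ for (GF2), and a Gronwall comparison with the reduced slow subsystem on $S_{0,\zeta}$ for (GF4), with the $\varepsilon^{-1}$ absorbed by the decay rate of $\mathrm{e}^{t\varepsilon^{-1}\tilde A_\varepsilon}$ exactly as you anticipate. The one place your plan understates the result is (GF2): since the graph also carries a $Y_F^\zeta$-component controlled only by the spectral gap, the actual bound in Proposition \ref{Prop 4.6} is $C\big(\varepsilon^\gamma + (N_S^\zeta-N_F^\zeta)^{-1}\big)\|v_0\|_{Y_1}$, not a pure $\mathcal{O}(\varepsilon)$.
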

The details as well as the proofs of each of the statements follow in  the subsequent subsections.

\begin{remark}
    We want to remark on the difference between the results presented in this section and the result by Bates et al. in  \cite{bates1998existence}.
    One key assumption in the later work is that 
    \begin{align*}
        \| T_{\varepsilon}(t)-T_0(t)\|_{C^1(N, X\times Y)} \to 0 \quad\text{as } \varepsilon\to 0,
    \end{align*}
    where $N$ is a neighborhood of $S_0$.
    When we compare this assumption with Theorem \ref{Thm 3.6} we observe that we only require
    \begin{align*}
        \| T_{\varepsilon}(t)-T_0(t)\|_{C(N, X_1\times Y_1)} \to 0 \quad\text{as } \varepsilon \to 0
    \end{align*}
    for $t\in [0,\infty)$ to hold.
    Hence, our result applies to a larger class of fast-reaction systems.
    One way to satisfy the assumption in \cite{bates1998existence} is to assume that the operator $B$ generates a $C^0$-group.
    We, however, choose a different approach in extending the Fenichel theory to infinite dimensions.
    Lastly, in the case of the equation for the slow component is an ODE or if $B$ generates a $C^0$-group we observe that the results of Theorem \ref{general fenichel thm} coincide with the ones in \cite{bates1998existence}. 
\end{remark}

\subsection{Assumptions}\label{sect 4.1}

To show the results in this section we need additional assumptions on the operator $B$ and the Banach space $Y$.
The reason for this is that we want to express all the terms that arise in the upcoming estimates in terms of the slow variable $v$.
However, when we rewrite the fast reaction system in the slow time scale by $\tau =\varepsilon t$
\begin{align*}\begin{split}
    \partial_\tau u^\varepsilon &= \tilde A_\varepsilon u^\varepsilon + \tilde f(u^\varepsilon,v^\varepsilon),\\
    \partial_\tau v^\varepsilon&= \varepsilon \big( Bv^\varepsilon + g(u^\varepsilon,v^\varepsilon) \big)\end{split}
\end{align*}
we observe that in the limit $\varepsilon\to 0$ the expression $\varepsilon B v^\varepsilon$ might not be well-defined as $B$ is an unbounded operator. For more details and an example we refer to \cite[Section 3.3]{hummel2022slow}.

Therefore, we introduce a splitting in the slow variable space
\begin{align*}
    Y= Y^\zeta_F\oplus Y^\zeta_S
\end{align*}
into a fast part $Y^\zeta_F$ and a slow part $Y^\zeta_S$, where  $\zeta>0$ is a small parameter, such that 
\begin{itemize}
    \item The spaces $Y^\zeta_F$ and $Y^\zeta_S$ are closed in $Y$ and the projections $\pr_{Y^\zeta_F}$ and $\pr_{Y^\zeta_S}$ commute with $B$ on $Y_1$.
    \item The spaces $Y^\zeta_F\cap Y_1$ and $Y^\zeta_S\cap Y_1$ are closed subspaces of $Y_1$ and are endowed with the norm $\|\cdot\|_{Y_1}$.
    \item The realization of $B$ in $Y^\zeta_F$, i.e. 
    \begin{align*}
        B_{Y^\zeta_F}:D(Y^\zeta_F)&\subset Y^\zeta_F\to Y^\zeta_F,\quad v\mapsto Bv
        \intertext{with}
        D(Y^\zeta_F)&:= \{v_0\in Y^\zeta_F\,:\, Bv_0\in Y^\zeta_F\}
    \end{align*}
    has $0$ in its resolvent set.
    \item The realization of $B$ in $Y^\zeta_S$ generates a $C_0$-group $(\textnormal{e}^{tB_{Y^\zeta_S}})_{t\in \mathbb{R}}\subset \mathcal{B}((Y^\zeta_S,\|\cdot\|_{Y}) )$ which for $t\geq 0$ satisfies $\textnormal{e}^{tB_{Y^\zeta_S}}=\textnormal{e}^{tB}$ on $Y^\zeta_S$.
    \item The fast subspace $Y^\zeta_F$ contains the parts of $Y_1$ that decay under the semigroup $(\textnormal{e}^{tB})_{t\geq 0}$ almost as fast as functions in the fast variable space $X_1$ under the semi-group $(\textnormal{e}^{\zeta^{-1}t\tilde A_\varepsilon})_{t\geq 0}$ generated by the modified operator $\tilde A_\varepsilon$.
    The space $Y^\zeta_S$ on the other hand contains the parts of $Y_1$ that do not decay or which decay only slowly under the semigroup $(\textnormal{e}^{tB})_{t\geq 0}$ compared to $X_1$ under $(\textnormal{e}^{\zeta^{-1}t \tilde A_\varepsilon})_{t\geq 0}$. 
    Hence, there are constants $C_B,\,M_B>0$ such that for $\zeta>0$ small enough there are constants $N_F^\zeta,N_S^\zeta$ satisfying   
    $$0\leq N_F^\zeta< N_S^\zeta\leq |\zeta^{-1}(\varepsilon\omega_A+\lambda_0)|$$
    such that for all $t\geq 0$ and $y_F\in Y^\zeta_F,\, y_S\in Y^\zeta_S$ we have 
    \begin{align*}
        \|\textnormal{e}^{tB}y_F\|_{Y_1}&\leq C_B t^{\delta -1} \textnormal{e}^{(N_F^\zeta +\zeta^{-1}(\varepsilon\omega_A+\lambda_0))t}\|y_F\|_{Y_\delta},\\
        \|\textnormal{e}^{-tB}y_S\|_{Y_1}&\leq M_B \textnormal{e}^{-(N_S^\zeta +\zeta^{-1}(\varepsilon\omega_A+\lambda_0))t}\|y_S\|_{Y_1}.
    \end{align*}
    \item A sufficient condition on the size of the parameter $\zeta>0$ is the relation  $\varepsilon \zeta^{-1}\leq c <1$.
    This assures that the following inequality holds true
         $$(1-\varepsilon \zeta^{-1})(\varepsilon\omega_A+\lambda_0)-\frac{\varepsilon}{2}(N_S^\zeta+N_F^\zeta)<0.$$
        
    \item Lastly, the following spectral gap condition holds
    \begin{align}\label{spectral gap condition}
         L:=  \frac{2^\gamma L_{\tilde f} C_A \Gamma(\gamma)}{(2(\varepsilon\zeta^{-1} -1)(\varepsilon\omega_A+\lambda_0) + \varepsilon (N_S^\zeta+N_F^\zeta) )^\gamma} + \frac{2^\delta L_gC_B \Gamma(\delta)}{(N_S^\zeta-N_F^\zeta)^\delta}+\frac{2L_g M_B \Gamma(\delta)}{N_S^\zeta-N_F^\zeta}< 1
    \end{align}
\end{itemize}
\begin{remark}
Here, we take a closer look at the terms arising in the spectral gap condition \eqref{spectral gap condition}. 
As $\varepsilon \zeta^{-1}\leq c <1$ holds and thus $(1-\varepsilon \zeta^{-1})(\varepsilon \omega_A+\lambda_0)>$ the first term can always be controlled by a constant $C<1$.\\
The quantities $N_S^\zeta,\, N^\zeta_F$ can be seen as shifted eigenvalues of the operator $\tilde A_\varepsilon$ and therefore $N_S^\zeta-N^\zeta_F$  can be considered as the spectral gap of this operator.
It relates to the parameter $\zeta$ through $N_S^\zeta-N^\zeta_F = C(\zeta)$, where $C(\zeta)\to \infty$ as $\zeta\to 0$, (see Subsection \ref{subsection example} for an example).\\
However, this spectral gap condition may be satisfied in low dimensions only.
Recalling that the Weyl asymptotic for the eigenvalues of the Laplacian reads $\lambda_m \sim C m^{2/d}$ yields for a domain $\Omega\subset\mathbb{R}$ that $N_S^\zeta-N^\zeta_F =\mathcal{O}( \zeta^{-1/2})$. 
For a general domain $\Omega\subset \mathbb{R}^d$, $d\geq 2$ this asymptotic may fail. Yet, for special domains such as $\Omega=[0,2\pi]^2$ a number theoretical result from \cite{richards1982gaps} yields that $N_S^\zeta-N^\zeta_F = \mathcal{O}(\log(\zeta))$.
Methods to overcome this problem are discussed in \cite{mallet1988inertial,zelik2014inertial,kostianko2021kwak}.
\end{remark}
With this splitting we can rewrite the fast reaction system in the following way
\begin{align}\label{full eq splitting}
    \begin{split}
             \partial_t u^\varepsilon(t)&=\frac{1}{\varepsilon} \tilde A_\varepsilon u^\varepsilon(t)+ \tilde f(u^\varepsilon(t),v_F^\varepsilon(t),v_S^\varepsilon(t)),\\
           \partial_t v^\varepsilon_F(t)&=B v^\varepsilon_F(t)+ \pr_{Y_F^\zeta} g(u^\varepsilon(t),v_F^\varepsilon(t),v_S^\varepsilon(t)),\\
            \partial_t v^\varepsilon_S(t)&=B v^\varepsilon_S(t)+ \pr_{Y_S^\zeta} g(u^\varepsilon(t),v_F^\varepsilon(t),v_S^\varepsilon(t)),\\
            u^\varepsilon(0)&=u_0,\quad     v^\varepsilon_F(0)=\pr_{Y_F^\zeta} v_0,\quad ~\, v^\varepsilon_S(0)=\pr_{Y_S^\zeta} v_0.
    \end{split}
\end{align}
   

\subsection{Existence of slow manifolds}
We construct a family of slow manifolds $S_{\varepsilon,\zeta}$ as 
\begin{align}
    S_{\varepsilon,\zeta}:=\{(h^{\varepsilon,\zeta}(v_0),v_0):\, v_0\in Y_1\cap Y_S^\zeta\},
\end{align}
where the function $h^{\varepsilon,\zeta}$ maps
\begin{align*}
    h^{\varepsilon,\xi}: (Y_S^\zeta\cap Y_1)\to  X_1
    \times(Y_F^\zeta\cap Y_1).
\end{align*}
The next step is to construct the functions $h^{\varepsilon,\zeta}$ using a Lyapunov-Perron method (cf. \cite{chow1991smooth,henry2006geometric}).
To this end, we introduce the operator
\begin{align}\label{eq: Lyapunov-Perron}
\begin{split}
    \mathcal{L}_{v_0,\varepsilon,\zeta}:C_\eta&\to C_\eta,\\
    \begin{pmatrix}
    u\\v_F\\v_S
    \end{pmatrix}&\mapsto \left[ t\mapsto \begin{pmatrix}
    \varepsilon^{-1}\int_{-\infty}^t \textnormal{e}^{(t-s)\tilde A_\varepsilon} \tilde f(u(s),v_F(s),v_S(s))\, \textnormal{d}s\\ \int_{-\infty}^t \textnormal{e}^{(t-s)B}\pr_{Y^\zeta_F}g(u(s),v_F(s),v_S(s))\, \textnormal{d}s\\ \textnormal{e}^{tB}v_0+\int_{0}^t \textnormal{e}^{(t-s)B}\pr_{X^\zeta_S}g(u(s),v_F(s),v_S(s))\, \textnormal{d}s
    \end{pmatrix}\right],
    \end{split}
\end{align}
 where $v_0\in Y_1\cap Y^\zeta_S$ and 
 \[C_\eta:= C((-\infty,0],\textnormal{e}^{\eta t}; X_1
    \times(Y_F^\zeta\cap Y_1)\times (Y_1\cap Y^\zeta_S)) \]
    is the space of $(u,v_F,v_S)\in C((-\infty,0]; X_1
    \times(Y_F^\zeta\cap Y_1)\times ( Y^\zeta_S \cap Y_1) $ such that
    \begin{align*}
        \|(u,v_F,v_S)\|_{C_\eta}:=\sup_{t\leq 0} \textnormal{e}^{-\eta t}(\|u\|_{X_1}+\|v_F\|_{Y_1}+\|v_S\|_{Y_1})<\infty.
    \end{align*}

\begin{prop} \label{Prop 4.2}
    Let $v_0\in Y_1\cap Y^\zeta_S$. Then the operator $\mathcal{L}_{v_0,\varepsilon,\zeta}$ has a unique fixed point in $C_\eta$.
\end{prop}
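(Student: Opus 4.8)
The plan is to show that $\mathcal{L}_{v_0,\varepsilon,\zeta}$ is a contraction on the complete metric space $C_\eta$ for a suitable choice of the exponential weight $\eta$, and then invoke Banach's fixed point theorem. This mirrors the strategy already used in the proof of Proposition \ref{Prop 3.1}, but now the three component maps live on half-line exponentially weighted spaces and involve, respectively, the semigroup $(\textnormal{e}^{tA})_{t\geq 0}$ on $X_1$, the backward-type convolution against $(\textnormal{e}^{tB})_{t\geq 0}$ on the fast slow-subspace $Y_F^\zeta$, and the forward convolution against the $C_0$-group $(\textnormal{e}^{tB_{Y_S^\zeta}})_{t\in\mathbb{R}}$ on the slow slow-subspace $Y_S^\zeta$. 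First I would check that $\mathcal{L}_{v_0,\varepsilon,\zeta}$ maps $C_\eta$ into itself: for the $u$-component one uses the smoothing estimate $\|\textnormal{e}^{tA}\|_{\mathcal{B}(X_\gamma,X_1)}\leq C_A t^{\gamma-1}\textnormal{e}^{\omega_A t}$ together with $f(0,0)=0$ and the global Lipschitz bound on $f$, so that the integral $\varepsilon^{-1}\int_{-\infty}^t \|\textnormal{e}^{(t-s)A}\|_{\mathcal{B}(X_\gamma,X_1)}\,\textnormal{d}s$ converges as a Gamma integral provided $\eta>\omega_A$ (here one must also exploit that $v_0\in Y_S^\zeta$ so the slow-subspace part is separated out); for the $v_F$-component one uses the fast-decay bound $\|\textnormal{e}^{tB}y_F\|_{Y_1}\leq C_B t^{\delta-1}\textnormal{e}^{(N_F^\zeta+\zeta^{-1}(\varepsilon\omega_A+\lambda_0))t}\|y_F\|_{Y_\delta}$ on $Y_F^\zeta$ together with $g:X_1\times Y_1\to Y_\delta$ Lipschitz and $g(0,0)=0$; and for the $v_S$-component the group estimate $\|\textnormal{e}^{-tB}y_S\|_{Y_1}\leq M_B\textnormal{e}^{-(N_S^\zeta+\zeta^{-1}(\varepsilon\omega_A+\lambda_0))t}\|y_S\|_{Y_1}$ handles the $\int_0^t \textnormal{e}^{(t-s)B}$ term backward in time.

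Next I would estimate $\|\mathcal{L}_{v_0,\varepsilon,\zeta}(u,v_F,v_S)-\mathcal{L}_{v_0,\varepsilon,\zeta}(\tilde u,\tilde v_F,\tilde v_S)\|_{C_\eta}$. The $v_0$-dependent term $\textnormal{e}^{tB}v_0$ cancels in the difference, so only the convolution terms contribute. Each of the three differences is bounded by $L_f$ or $L_g$ times a semigroup-weighted integral of $\|(u,v_F,v_S)-(\tilde u,\tilde v_F,\tilde v_S)\|_{C_\eta}$, and the integrals again reduce to Gamma-type integrals such as $\int_0^\infty \sigma^{\gamma-1}\textnormal{e}^{-\sigma(\eta-\omega_A)}\,\textnormal{d}\sigma = \Gamma(\gamma)(\eta-\omega_A)^{-\gamma}$. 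One collects the prefactors: roughly $\varepsilon^{-1}L_f C_A \Gamma(\gamma)(\eta-\omega_A)^{-\gamma}$ for the first row, $L_g C_B \Gamma(\delta)(\eta + N_S^\zeta + \zeta^{-1}|\varepsilon\omega_A+\lambda_0|)^{-\delta}$-type bounds for the other two. Choosing $\eta$ large enough makes the sum of these Lipschitz constants strictly less than $1$, hence $\mathcal{L}_{v_0,\varepsilon,\zeta}$ is a contraction; since $C_\eta$ is a closed subspace of a Banach space and hence complete, the Banach fixed point theorem yields a unique fixed point.

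The main obstacle I anticipate is the $\varepsilon^{-1}$ prefactor in the $u$-equation. Unlike in Proposition \ref{Prop 3.1}, here the weight $\eta$ must be taken very large to absorb $\varepsilon^{-1}L_f C_A \Gamma(\gamma)(\eta-\omega_A)^{-\gamma}$, and one must check this is consistent with the decay required so that the improper integrals over $(-\infty,t]$ actually converge — in particular $\eta$ must stay above $\omega_A$ (and above the relevant $N^\zeta$-shifted rates for the $v_F$, $v_S$ rows) while being large. A cleaner route, which I would prefer, is to work with the modified system \eqref{modified fast reaction}: replacing $A$ by $\tilde A_\varepsilon = \varepsilon A + \textnormal{D}_x f(u^0,v^0)$ in the first component and $f$ by $\tilde f$, one has the better bound $\|\textnormal{e}^{t\tilde A_\varepsilon}\|_{\mathcal{B}(X_\gamma,X_1)}\leq C_{\tilde A}t^{\gamma-1}\textnormal{e}^{(\varepsilon\omega_A+\lambda_0)t}$ with $\varepsilon\omega_A+\lambda_0<0$, so that after rescaling time by $\zeta$ (as in the splitting assumptions) the first-row Lipschitz contribution becomes $\zeta^{-1}L_{\tilde f}C_{\tilde A}\Gamma(\gamma)(\eta + \zeta^{-1}|\varepsilon\omega_A+\lambda_0|)^{-\gamma}$, which stays controlled because the exponential rate itself grows like $\zeta^{-1}$; this is exactly the trick the authors already singled out as "the second approach". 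A secondary technical point is verifying that the image actually lands in the right subspaces, i.e. that $\pr_{Y_F^\zeta}g$-convolutions stay in $Y_F^\zeta\cap Y_1$ and $\pr_{Y_S^\zeta}g$-convolutions in $Y_S^\zeta\cap Y_1$; this follows from the assumption that the projections commute with $B$ on $Y_1$, so it is routine but should be stated.
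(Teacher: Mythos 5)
Your overall strategy coincides with the paper's: pass to the modified system \eqref{modified fast reaction} (replacing $A$ by $\varepsilon^{-1}\tilde A_\varepsilon$ and $f$ by $\tilde f$) and run a Banach fixed point / Lyapunov--Perron argument on the weighted space $C_\eta$; the paper simply delegates the contraction estimates to \cite[Prop.~5.1]{hummel2022slow} after fixing $\eta= \zeta^{-1}(\varepsilon\omega_A+\lambda_0) +\tfrac{1}{2}(N_S^\zeta+N_F^\zeta)$. Your identification of the $\varepsilon^{-1}$ prefactor as the main obstacle, and of the modified operator as the cure, is exactly right.

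There is, however, one genuine error in how you propose to close the contraction: the weight $\eta$ \emph{cannot} be "chosen large enough." Unlike in Proposition \ref{Prop 3.1}, the space $C_\eta$ lives on the half-line $(-\infty,0]$ and the third component of $\mathcal{L}_{v_0,\varepsilon,\zeta}$ contains both the inhomogeneous term $\textnormal{e}^{tB}v_0$ and the integral $\int_0^t$ run \emph{backward} from $0$. For $t\leq 0$ one has $\textnormal{e}^{-\eta t}\|\textnormal{e}^{tB}v_0\|_{Y_1}\leq M_B\textnormal{e}^{(N_S^\zeta+\zeta^{-1}(\varepsilon\omega_A+\lambda_0)-\eta)t}\|v_0\|_{Y_1}$, which stays bounded as $t\to-\infty$ only if $\eta\leq N_S^\zeta+\zeta^{-1}(\varepsilon\omega_A+\lambda_0)$; the same \emph{upper} bound on $\eta$ is needed for the backward convolution against the group on $Y_S^\zeta$ to be uniformly convergent. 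So your statement that $\eta$ must lie "above the relevant $N^\zeta$-shifted rates for the $v_F$, $v_S$ rows" has the wrong sign for the $v_S$ row: $\eta$ is pinned \emph{inside} the spectral gap, above $N_F^\zeta+\zeta^{-1}(\varepsilon\omega_A+\lambda_0)$ and $\varepsilon^{-1}(\varepsilon\omega_A+\lambda_0)$ but below $N_S^\zeta+\zeta^{-1}(\varepsilon\omega_A+\lambda_0)$, which is why the paper takes the midpoint. Consequently the smallness of the Lipschitz constant of $\mathcal{L}_{v_0,\varepsilon,\zeta}$ is not obtained by sending $\eta\to\infty$; it comes from the first row being $\mathcal{O}(\varepsilon^{-\gamma}(\eta-\varepsilon^{-1}(\varepsilon\omega_A+\lambda_0))^{-\gamma})=\mathcal{O}(1)$ thanks to $\varepsilon\zeta^{-1}<1$, and from the $v_F$, $v_S$ rows being controlled by inverse powers of the gap $N_S^\zeta-N_F^\zeta$, which is large for $\zeta$ small by the spectral-gap assumption. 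This is precisely the content of the condition $L<1$ recorded in the proposition that follows; without invoking the gap, your argument does not close.
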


\begin{proof}
As a first step we rewrite the system into the modified fast-slow system while maintaining the splitting in the slow variable $v$, where we replace the operator $A$ by $\frac{1}{\varepsilon}\tilde A_\varepsilon$ and the nonlinear function $f$ by $\tilde f$.
Then, we show that the Lyapunov-Perron operator introduced in equation \eqref{eq: Lyapunov-Perron} has a unique fixed point.
To do this, we apply the existence result shown in \cite[Prop. 5.1]{hummel2022slow}, where we set
\begin{align*}
    \eta= \zeta^{-1}(\varepsilon\omega_A+\lambda_0) +\frac{N_S^\zeta+N_F^\zeta}{2}.
\end{align*}
The function $h^{\varepsilon,\zeta}$ used to construct the family of slow manifolds is given by the first two components of the fixed point of the Lyapunov-Perron operator $\mathcal{L}_{v_0,\varepsilon,\zeta}$ evaluated at $t=0$, i.e. $h^{\varepsilon,\zeta}(v_0)= (u^{v_0}(0),v^{v_0}_F(0))$.
This completes the first part of the statement (GF1) in Theorem \ref{general fenichel thm}.
In addition, the invariance of the slow manifold follows from the construction of the Lyapunov-Perron operator, where we consider all solutions in backward time that can develop from the initial condition in the slow component.
\end{proof}

\begin{prop}
Let $(u^{v_0},v_F^{v_0},v_S{v_0})$ be the unique fixed point of $\mathcal{L}_{v_0,\varepsilon,\zeta}$ evaluated at $t=0$ and assume that 
\begin{align*}
  L:=  \frac{2^\gamma L_{\tilde f} C_A \Gamma(\gamma)}{(2(\varepsilon\zeta^{-1} -1)(\varepsilon\omega_A+\lambda_0) + \varepsilon (N_S^\zeta+N_F^\zeta) )^\gamma} + \frac{2^\delta L_gC_B \Gamma(\delta)}{(N_S^\zeta-N_F^\zeta)^\delta}+\frac{2L_gM_B \Gamma(\delta)}{N_S^\zeta-N_F^\zeta}< 1.
\end{align*}
Then, the mapping
\begin{align*}
    h^{\varepsilon,\zeta}: (Y^\zeta_S\cap Y_1)\to X_1\times (Y^\zeta_F\cap Y_1),\, v_0\mapsto (u^{v_0}(0), v_F^{v_0}(0))
\end{align*}
is Lipschitz continuous with Lipschitz constant $L_\zeta= M_B/(1-L)$.
\end{prop}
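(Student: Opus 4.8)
The plan is to estimate the distance between the fixed points of $\mathcal{L}_{v_0,\varepsilon,\zeta}$ and $\mathcal{L}_{\tilde v_0,\varepsilon,\zeta}$ in the space $C_\eta$ and then read off the Lipschitz constant of $h^{\varepsilon,\zeta}$ by evaluating at $t=0$, where the weight $\textnormal{e}^{-\eta t}$ equals $1$. Fix $v_0,\tilde v_0\in Y_1\cap Y^\zeta_S$ and let $z=(u,v_F,v_S)$, $\tilde z=(\tilde u,\tilde v_F,\tilde v_S)\in C_\eta$ denote the corresponding fixed points from Proposition \ref{Prop 4.2}. Writing
\begin{align*}
 z-\tilde z=\mathcal{L}_{v_0,\varepsilon,\zeta}(z)-\mathcal{L}_{\tilde v_0,\varepsilon,\zeta}(\tilde z)=\big(\mathcal{L}_{v_0,\varepsilon,\zeta}(z)-\mathcal{L}_{v_0,\varepsilon,\zeta}(\tilde z)\big)+\big(\mathcal{L}_{v_0,\varepsilon,\zeta}(\tilde z)-\mathcal{L}_{\tilde v_0,\varepsilon,\zeta}(\tilde z)\big)
\end{align*}
splits the analysis into a contraction term and a data-dependent term.

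For the contraction term one shows that, uniformly in $v_0\in Y_1\cap Y^\zeta_S$, the map $\mathcal{L}_{v_0,\varepsilon,\zeta}$ is Lipschitz on $C_\eta$ with constant $L$. This is the quantitative version of the estimate already underlying Proposition \ref{Prop 4.2} (cf. \cite[Prop. 5.1]{hummel2022slow}, applied to the modified system \eqref{modified fast reaction} with $A$ replaced by $\varepsilon^{-1}\tilde A_\varepsilon$ and $f$ by $\tilde f$): one bounds the three integral components using the semigroup estimates for $\textnormal{e}^{t\tilde A_\varepsilon}$ in $\mathcal{B}(X_\gamma,X_1)$, for $\textnormal{e}^{tB}$ on the fast part $Y^\zeta_F$ in $\mathcal{B}(Y_\delta,Y_1)$, and for $\textnormal{e}^{-tB}$ on the slow part $Y^\zeta_S$, together with the Lipschitz constants $L_{\tilde f},L_g$ and the choice $\eta=\zeta^{-1}(\varepsilon\omega_A+\lambda_0)+\tfrac12(N^\zeta_S+N^\zeta_F)$. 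After inserting the weight $\textnormal{e}^{-\eta t}$ and substituting $\sigma=t-s$, the three pieces reduce to integrals of the form $\int_0^\infty\sigma^{\gamma-1}\textnormal{e}^{-a\sigma}\,\textnormal{d}\sigma$ and $\int_0^\infty\sigma^{\delta-1}\textnormal{e}^{-b\sigma}\,\textnormal{d}\sigma$, which produce the three summands in the definition of $L$; the standing hypothesis $L<1$ is exactly what makes $\mathcal{L}_{v_0,\varepsilon,\zeta}$ a contraction. Hence $\|\mathcal{L}_{v_0,\varepsilon,\zeta}(z)-\mathcal{L}_{v_0,\varepsilon,\zeta}(\tilde z)\|_{C_\eta}\leq L\,\|z-\tilde z\|_{C_\eta}$.

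For the data-dependent term, observe that $\mathcal{L}_{v_0,\varepsilon,\zeta}(\tilde z)$ and $\mathcal{L}_{\tilde v_0,\varepsilon,\zeta}(\tilde z)$ coincide in the $u$- and $v_F$-components and differ in the $v_S$-component only through the term $\textnormal{e}^{tB}(v_0-\tilde v_0)$. Using that the realization of $B$ in $Y^\zeta_S$ generates a $C_0$-group together with the backward bound $\|\textnormal{e}^{tB}y_S\|_{Y_1}\leq M_B\textnormal{e}^{(N^\zeta_S+\zeta^{-1}(\varepsilon\omega_A+\lambda_0))t}\|y_S\|_{Y_1}$ for $t\leq0$, and noting that $N^\zeta_S+\zeta^{-1}(\varepsilon\omega_A+\lambda_0)-\eta=\tfrac12(N^\zeta_S-N^\zeta_F)\geq0$, we obtain
\begin{align*}
 \|\mathcal{L}_{v_0,\varepsilon,\zeta}(\tilde z)-\mathcal{L}_{\tilde v_0,\varepsilon,\zeta}(\tilde z)\|_{C_\eta}=\sup_{t\leq0}\textnormal{e}^{-\eta t}\|\textnormal{e}^{tB}(v_0-\tilde v_0)\|_{Y_1}\leq M_B\|v_0-\tilde v_0\|_{Y_1}.
\end{align*}
Combining the two estimates gives $\|z-\tilde z\|_{C_\eta}\leq L\|z-\tilde z\|_{C_\eta}+M_B\|v_0-\tilde v_0\|_{Y_1}$, hence $\|z-\tilde z\|_{C_\eta}\leq\frac{M_B}{1-L}\|v_0-\tilde v_0\|_{Y_1}$. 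Since $h^{\varepsilon,\zeta}(v_0)=(u(0),v_F(0))$ and the weight at $t=0$ equals $1$, we conclude $\|h^{\varepsilon,\zeta}(v_0)-h^{\varepsilon,\zeta}(\tilde v_0)\|_{X_1\times Y_1}\leq\|z-\tilde z\|_{C_\eta}\leq L_\zeta\|v_0-\tilde v_0\|_{Y_1}$ with $L_\zeta=M_B/(1-L)$.

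The main obstacle is the bookkeeping behind the uniform contraction estimate: one must apply each of the three semigroup bounds on the correct space, keep track of the fractional powers and the extra $\varepsilon^{-1}$-factor in the $u$-line, and verify that after inserting the weight $\textnormal{e}^{-\eta t}$ the decay rates $a,b$ in the Gamma-integrals are strictly positive — this is precisely where the choice of $\eta$ and the inequalities $N^\zeta_F<N^\zeta_S$, $\varepsilon\zeta^{-1}<1$ and $\varepsilon\omega_A+\lambda_0<0$ enter — so that the integrals converge and reproduce the three denominators appearing in $L$. Once the contraction constant $L$ is pinned down, the two-line argument above is routine.
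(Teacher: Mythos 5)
Your argument is correct and is exactly the standard Lipschitz-dependence-on-parameters argument for the Lyapunov--Perron fixed point that the paper invokes by citing Proposition \ref{Prop 4.2} and \cite[Prop. 5.2]{hummel2022slow}: split $z-\tilde z$ into the contraction part (with constant $L$) and the inhomogeneous part $\textnormal{e}^{tB}(v_0-\tilde v_0)$, bound the latter by $M_B\|v_0-\tilde v_0\|_{Y_1}$ using the backward group estimate on $Y_S^\zeta$ and the sign of $\tfrac12(N_S^\zeta-N_F^\zeta)t$ for $t\le 0$, and absorb. Your proof supplies the details the paper leaves to the reference, and the conclusion $L_\zeta=M_B/(1-L)$ follows as you state.
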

\begin{proof}
This follows from the estimates in Proposition \ref{Prop 4.2}, cf. \cite[Prop 5.2]{hummel2022slow}.
\end{proof}

\subsection{Distance to critical manifold}
\begin{prop}\label{Prop 4.6}
There exists a constant $C>0$ depending on the operator $A$ and the nonlinear functions $f$ and $g$ such that for all $\varepsilon,\,\zeta>0$ small enough satisfying $\varepsilon\zeta^{-1}<1$ and all initial data $v_0 \in Y_S^\zeta$ it holds that 
\begin{align}
    \bigg\|\begin{pmatrix}
   h^{\varepsilon,\zeta}_{X_1}(v_0)- h^0(v_0)\\ h^{\varepsilon,\zeta}_{Y_F}(v_0)
    \end{pmatrix}\bigg\|_{X_\alpha\times Y_1}&\leq C\bigg( \varepsilon^{1-\alpha} +\frac{1}{(N_S^\zeta-N_F^\zeta)^\delta}\bigg) \|v_0\|_{Y_1}.
\end{align}
\end{prop}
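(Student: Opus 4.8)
The plan is to exploit the fixed-point characterization of $h^{\varepsilon,\zeta}$ from Proposition \ref{Prop 4.2} together with the modified system \eqref{modified fast reaction}, following the pattern already used in Theorem \ref{Thm 3.6}. Let $(u^{v_0},v_F^{v_0},v_S^{v_0})$ denote the fixed point of $\mathcal{L}_{v_0,\varepsilon,\zeta}$ in $C_\eta$, so that in particular
\begin{align*}
    u^{v_0}(t) &= \varepsilon^{-1}\int_{-\infty}^t \textnormal{e}^{\varepsilon^{-1}(t-s)\tilde A_\varepsilon}\tilde f(u^{v_0}(s),v_F^{v_0}(s),v_S^{v_0}(s))\,\textnormal{d}s,\\
    v_F^{v_0}(t) &= \int_{-\infty}^t \textnormal{e}^{(t-s)B}\pr_{Y_F^\zeta}g(u^{v_0}(s),v_F^{v_0}(s),v_S^{v_0}(s))\,\textnormal{d}s.
\end{align*}
First I would estimate $\|h^{\varepsilon,\zeta}_{Y_F}(v_0)\| = \|v_F^{v_0}(0)\|_{Y_1}$ directly: using the decay estimate for $\textnormal{e}^{tB}$ on $Y_F^\zeta$ with rate $N_F^\zeta+\zeta^{-1}(\varepsilon\omega_A+\lambda_0)$, the boundedness of $g$ by $L_g$, $g(0,0)=0$, and the a priori bound on $\|(u^{v_0},v_F^{v_0},v_S^{v_0})\|_{C_\eta}$ coming from the fixed-point construction (which in turn is controlled by $\|v_0\|_{Y_1}$ via the $v_S$-equation), the convolution integral produces a factor of order $1/(N_S^\zeta-N_F^\zeta)$ times $\|v_0\|_{Y_1}$. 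This gives the $Y_F$-component of the claimed bound.

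For the $X_1$-component I would compare $u^{v_0}(0)$ with $h^0(v_0)$ by inserting the identity $h^0(v_0) = -(\tilde A_\varepsilon)^{-1}\big(\textnormal{D}_xf(h^0(v_0),v_0)-\tilde A_\varepsilon\big)h^0(v_0) = \ldots$, or more cleanly by using $0 = -\textnormal{D}_xf(h^0(v_0),v_0)h^0(v_0)+\tilde f(h^0(v_0),v_0)$ together with the resolvent identity $\textnormal{D}_xf(h^0(v^0),v^0)(\tilde A_\varepsilon)^{-1} = I-\varepsilon J$ with $J=(\tilde A_\varepsilon)^{-1}A$ bounded, exactly as in the estimate of term $\mathcal{III}$ in Theorem \ref{Thm 3.6}. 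Writing $h^0(v_0) = \varepsilon^{-1}\int_{-\infty}^0 \textnormal{e}^{-\varepsilon^{-1}s\tilde A_\varepsilon}\tilde f(h^0(v_0),v_0)\,\textnormal{d}s$ up to an $\mathcal{O}(\varepsilon)$ correction (using that $\tilde f(h^0(v_0),v_0) = \textnormal{D}_xf\cdot h^0(v_0)$ and the spectral bound $\varepsilon\omega_A+\lambda_0<0$), I subtract the fixed-point formula for $u^{v_0}(0)$. The difference splits into (i) a Lipschitz-in-$u$ term $\leq C_AL_{\tilde f}\int_{-\infty}^0 \frac{\textnormal{e}^{\varepsilon^{-1}(\varepsilon\omega_A+\lambda_0)(-s)}}{\varepsilon^\gamma(-s)^{1-\gamma}}\|u^{v_0}(s)-h^0(v_0)\|_{X_1}\,\textnormal{d}s$, (ii) a term measuring the dependence of $\tilde f$ on $v_F^{v_0}$ and on $v_S^{v_0}(s)-v_0$, and (iii) the $\mathcal{O}(\varepsilon)$ correction from the $I-\varepsilon J$ discrepancy. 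Term (i) is absorbed by a Gronwall/contraction argument on the weighted norm (the relevant constant is exactly the quantity $L<1$ appearing in the previous proposition); term (iii) yields $C\varepsilon\|v_0\|_{Y_1}\leq C\varepsilon^\gamma\|v_0\|_{Y_1}$; and term (ii) is controlled by the $Y_F$-bound just obtained plus a bound on $\|v_S^{v_0}(s)-v_0\|_{Y_1}$ for $s\leq 0$, which decays and integrates against the kernel to give another $1/(N_S^\zeta-N_F^\zeta)$ factor. Collecting everything and combining with the $Y_F$-estimate yields the stated inequality.

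The main obstacle I anticipate is controlling the slow convolution kernel $\varepsilon^{-1}\textnormal{e}^{\varepsilon^{-1}(\varepsilon\omega_A+\lambda_0)(-s)}(-s)^{\gamma-1}$ uniformly as $\varepsilon\to 0$: naively $\varepsilon^{-1}$ blows up, and one must instead change variables $\sigma = -s/\varepsilon$ so that $\int_0^\infty \varepsilon^{-1}\textnormal{e}^{\varepsilon^{-1}(\varepsilon\omega_A+\lambda_0)(-s)}(-s)^{\gamma-1}\,\textnormal{d}(-s) = \varepsilon^{\gamma-1}\Gamma(\gamma)/|\varepsilon\omega_A+\lambda_0|^\gamma$, which is where the genuine $\varepsilon^\gamma$ (as opposed to $\varepsilon$) arises and why the exponent in the statement is $\gamma$ rather than $1$. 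A secondary subtlety is that the a priori $C_\eta$-bound on the fixed point must be shown to be linear in $\|v_0\|_{Y_1}$ with a constant independent of $\varepsilon$ and $\zeta$ (for $\varepsilon\zeta^{-1}<1$), which requires tracking the choice $\eta = \zeta^{-1}(\varepsilon\omega_A+\lambda_0)+\tfrac12(N_S^\zeta+N_F^\zeta)$ through the estimates of Proposition \ref{Prop 4.2}; I would handle this by citing the corresponding bound from \cite[Prop.~5.1--5.2]{hummel2022slow} and checking that the modified operator $\tilde A_\varepsilon$ only improves the relevant decay rate.
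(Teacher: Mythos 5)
Your strategy is essentially the one the paper uses: the $Y_F$-component is handled exactly as in the paper (convolution against the $Y_F^\zeta$-decay of $\textnormal{e}^{tB}$, producing the $1/(N_S^\zeta-N_F^\zeta)$ factor), and the $X_1$-component is compared with $h^0$ via the algebraic identity $\tilde f(h^0(v),v)=-\textnormal{D}_xf\,h^0(v)$, the representation of $(\tilde A_\varepsilon)^{-1}$ as an integral of the semigroup, the perturbation identity $\textnormal{D}_xf\,(\tilde A_\varepsilon)^{-1}=I-\varepsilon J$, and a Gronwall absorption of the Lipschitz-in-$u$ term; your change-of-variables remark correctly identifies why the convolution kernel is uniformly integrable in $\varepsilon$. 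The one place you genuinely deviate is that you freeze the comparison point at $v_0$, writing $h^0(v_0)$ as a constant-in-$s$ integral, which forces you to estimate the extra term $\|v_S^{v_0}(s)-v_0\|_{Y_1}$ for $s\le 0$ and makes the Gronwall loop awkward: inside the integral the natural quantity is $\|u^{v_0}(s)-h^0(\bar v_S(s))\|_{X_1}$, not $\|u^{v_0}(s)-h^0(v_0)\|_{X_1}$, and the backward orbit $\bar v_S(s)$ is only controlled in the weighted $C_\eta$-norm (it may grow as $s\to-\infty$), so your claim that this term "decays and integrates to another $1/(N_S^\zeta-N_F^\zeta)$" is not justified as stated. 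The paper avoids this entirely by comparing $h^{\varepsilon,\zeta}_{X_1}(\bar v_S(t))$ with $h^0(\bar v_S(t))$ \emph{along the backward orbit} for $t\in[t_0,0]$ and integrating by parts on $[t_0,t]$, so that the time dependence of $\tilde f(h^0(\bar v_S(s)),0,\bar v_S(s))$ appears only through $\partial_s\tilde f$ multiplied by $(\tilde A_\varepsilon)^{-1}$, yielding the $\varepsilon^\gamma$ contribution directly, and the Gronwall inequality closes in the single quantity $\|h^{\varepsilon,\zeta}_{X_1}(\bar v_S(s))-h^0(\bar v_S(s))\|_{X_1}$ before letting $t_0\to-\infty$. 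I would recommend adopting that orbit-following decomposition; otherwise you must supply a separate (and not obviously available) bound on $\|v_S^{v_0}(s)-v_0\|_{Y_1}$. A further minor slip: in your final paragraph the kernel is written with $\varepsilon^{-1}$ instead of the $\varepsilon^{-\gamma}$ appearing (correctly) earlier in your own display, which is what makes the convolution integral $\mathcal{O}(1)$ rather than $\mathcal{O}(\varepsilon^{\gamma-1})$.
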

\begin{proof}
Let $(\bar u, \bar v_F, \bar v_S)\in C_\eta$ be the unique fixed point of $\mathcal{L}_{v_0,\varepsilon,\zeta}$.
That is we have 
\[(\bar u, \bar v_F, \bar v_S)=(h^{\varepsilon,\zeta}_{X_1}(\bar v_S),h^{\varepsilon,\zeta}_{Y_F}(\bar v_S)), \bar v_S).\]
Since $(\bar u, \bar v_F, \bar v_S)$ solves the system \eqref{full eq splitting} on $(-\infty,0]$ we have that $\bar v_S\in C^1((-\infty,0],\textnormal{e}^{\eta t};Y)$.
We estimate that
\begin{align*}
  \|h^{\varepsilon,\zeta}_{Y_F}(\bar v_S(t))\|_{Y_1}& = \bigg\|\int_{-\infty}^t \textnormal{e}^{(t-s)B}\pr_{Y_F^\zeta} g(h^{\varepsilon,\zeta}_{X_1}(\bar v_S(s)),h^{\varepsilon,\zeta}_{Y_F}(\bar v_S(s)),\bar v_S(s)) \, \textnormal{d} s\bigg\|_{Y_1}\\
  &\leq L_gC_B \textnormal{e}^{\eta t} \|h^{\varepsilon,\zeta}_{X_1}(\bar v_S(s)),h^{\varepsilon,\zeta}_{Y_F}(\bar v_S(s)),\bar v_S(s)\|_{C_\eta}\int_{-\infty}^t \frac{\textnormal{e}^{(t-s)(\zeta^{-1}\omega_A +N_F^\zeta -\eta)}}{(t-s)^{1-\delta}}\, \textnormal{d}s\\
  &\leq \frac{L_\zeta L_g C_B \Gamma(\delta)\textnormal{e}^{\eta t}}{ (\eta -\zeta^{-1}(\varepsilon\omega_A+\lambda_0)- N_F^\zeta)^\delta}\|v_0\|_{Y_1}\\
  &\leq C \frac{1}{N_S^\zeta-N_F^\zeta} \|v_0\|_{Y_1} .
\end{align*}
In addition, for $t_0\leq t\leq 0$ we compute
\begin{align*}
    h^{\varepsilon,\zeta}_{X_1}(\bar v_S(t))- h^0(\bar v_S(t))&=  \varepsilon^{-1}\int_{-\infty}^t \textnormal{e}^{(t-s)\varepsilon^{-1} \tilde A_\varepsilon}  \tilde f(h^{\varepsilon,\zeta}_{X_1}(\bar v_S(s)),h^{\varepsilon,\zeta}_{Y_F}(\bar v_S(s)),\bar v_S(s)) \, \textnormal{d} s - h^0(\bar v_S(t)) \\
    &=  \varepsilon^{-1}\int_{-\infty}^t \textnormal{e}^{(t-s)\varepsilon^{-1} \tilde A_\varepsilon}  \tilde f(h^{\varepsilon,\zeta}_{X_1}(\bar v_S(s)),h^{\varepsilon,\zeta}_{Y_F}(\bar v_S(s)),\bar v_S(s)) \, \textnormal{d} s\\
    &\quad  +\big(\textnormal{D}_x f(h^0(v^0),v^0)\big)^{-1} \tilde f( h^0(\bar v_S),0,\bar v_S) -\big( \tilde A_\varepsilon)^{-1} \tilde f( h^0(\bar v_S),0,\bar v_S)\\
    &\quad+ \big( \tilde A_\varepsilon)^{-1} \tilde f( h^0(\bar v_S),0,\bar v_S)\\
    \intertext{Using integration by parts yields}
    &= \varepsilon^{-1}\int_{-\infty}^{t_0} \textnormal{e}^{(t-s)\varepsilon^{-1} \tilde A_\varepsilon}  \tilde f(h^{\varepsilon,\zeta}_{X_1}(\bar v_S(s)),h^{\varepsilon,\zeta}_{Y_F}(\bar v_S(s)),\bar v_S(s)) \, \textnormal{d} s\\
    &\quad +\int_{t_0}^t  \textnormal{e}^{(t-s)\varepsilon^{-1} \tilde A_\varepsilon}\big(\tilde A_\varepsilon\big)^{-1}\partial_s \tilde f( h^0(\bar v_S(s)),0,\bar v_S(s))\, \textnormal{d}s  \\
    &\quad + \varepsilon^{-1}\int_{t_0}^{t} \textnormal{e}^{(t-s)\varepsilon^{-1} \tilde A_\varepsilon} \big( \tilde f(h^{\varepsilon,\zeta}_{X_1}(\bar v_S(s)),h^{\varepsilon,\zeta}_{Y_F}(\bar v_S(s)),\bar v_S(s))- \tilde f( h^0(\bar v_S(s)),0,\bar v_S(s))\big)\, \textnormal{d} s\\
    &\quad +\textnormal{e}^{(t-t_0)\varepsilon^{-1} \tilde A_\varepsilon}\big(\tilde A_\varepsilon\big)^{-1} \tilde f( h^0(\bar v_S(t_0)),0,\bar v_S(t_0))\\
    &\quad+\big((\textnormal{D}_x f(h^0(v^0),v^0))^{-1}- (\tilde A_\varepsilon)^{-1}\big) \tilde f( h^0(\bar v_S),0,\bar v_S). 
\end{align*}
Again we observe the additional terms in the last line due to the modified operator $\tilde A_\varepsilon$, which will lead again to a lower regularity of the estimate, when comparing this proof with the one in \cite{hummel2022slow}.
Hence, we estimate the above expression as follows
\begin{align*}
   \|h^{\varepsilon,\zeta}_{X_1}(\bar v_S)&- h^0(\bar v_S)\|_{X_\alpha} \leq \\
    & \leq C_A L_{\tilde f} \int_{-\infty}^{t_0} \frac{\textnormal{e}^{(t-s)\varepsilon^{-1} (\varepsilon\omega_A+\lambda_0)}}{\varepsilon^\gamma (t-s)^{1-\gamma}} \|h^{\varepsilon,\zeta}_{X_1}(\bar v_S(s)),h^{\varepsilon,\zeta}_{Y_F}(\bar v_S(s)),\bar v_S(s))\|_{X_1\times Y_1\times Y_1} \, \textnormal{d} s \\
    &\quad + \varepsilon \|\big(\tilde A_\varepsilon\big)^{-1}\|_{\mathcal{B}(X,X_{\gamma-(1-\alpha)})}\int_{t_0}^t \frac{\textnormal{e}^{(t-s)\varepsilon^{-1} (\varepsilon\omega_A+\lambda_0)}}{\varepsilon^\gamma (t-s)^{1-\gamma}} \, \textnormal{d} s \|\tilde f (h^0(\bar v_S),\bar v_S)\|_{L^\infty((-\infty,0];X)}\\
    &\quad+ C_A L_{\tilde f} \int_{t_0}^t \frac{\textnormal{e}^{(t-s)\varepsilon^{-1} (\varepsilon\omega_A+\lambda_0)}}{\varepsilon^\gamma (t-s)^{1-\gamma}} \|h^{\varepsilon,\zeta}_{X_1}(\bar v_S(s))-h^0(\bar v_S(s)),h^{\varepsilon,\zeta}_{Y_F}(\bar v_S(s))\|_{X_\alpha\times Y_1}\, \textnormal{d}s\\
    &\quad+ M_A L_{\tilde f} \textnormal{e}^{\varepsilon^{-1}(\varepsilon\omega_A+\lambda_0)(t-t_0)}  \|\big(\tilde A_\varepsilon\big)^{-1}\|_{\mathcal{B}(X_\gamma,X_1)} \| h^0(\bar v_S(t_0)),\bar v_S(t_0)\|_{X_1\times Y_1}\\
    &\quad +L_{\tilde f}\varepsilon \| \big(\textnormal{D}_x f(h^0(v^0),v^0)\big)^{-1}J\|_{\mathcal{B}(X_\gamma-(1-\alpha),X_1)}\| h^0(\bar v_S(t)),\bar v_S(t)\|_{X_1\times Y_1}\\
    & \leq  C_A L_{\tilde f}\textnormal{e}^{(t-t_0)\varepsilon^{-1} (\varepsilon\omega_A+\lambda_0)+\eta t_0}  \int_{-\infty}^{t_0} \frac{\textnormal{e}^{(t_0-s)\varepsilon^{-1} (\varepsilon\omega_A+\lambda_0-\varepsilon\eta )}}{\varepsilon^\gamma (t-s)^{1-\gamma}}\, \textnormal{d} s  \|h^{\varepsilon,\zeta}_{X_1}(\bar v_S),h^{\varepsilon,\zeta}_{Y_F}(\bar v_S),\bar v_S)\|_{C_\eta}\\
    &\quad + \varepsilon L_{\tilde f}\|\big(\tilde A_\varepsilon\big)^{-1}\|_{\mathcal{B}(X,X_{\gamma-(1-\alpha)})}\int_{t_0}^t \frac{\textnormal{e}^{(t-s)\varepsilon^{-1} (\varepsilon\omega_A+\lambda_0)}}{\varepsilon^\gamma (t-s)^{1-\gamma}} \, \textnormal{d} s \times\\
    &\qquad\times (1+L_h)\tilde C \textnormal{e}^{\omega_B t} \big(\|Bv_0\|_Y+ L_g(L_h+1)\|v_0\|_Y + \|v_0\|_{Y_1}\big)\\
    &\quad+ C_A L_{\tilde f} \int_{t_0}^t \frac{\textnormal{e}^{(t-s)\varepsilon^{-1} (\varepsilon\omega_A+\lambda_0)}}{\varepsilon^\gamma (t-s)^{1-\gamma}} \|h^{\varepsilon,\zeta}_{X_1}(\bar v_S(s))-h^0(\bar v_S(s))\|_{X_\alpha}\, \textnormal{d}s\\
    &\quad  + C_A L_{\tilde f} \frac{L_\zeta L_g C_B \Gamma(\delta)}{ (\eta -\zeta^{-1}( \varepsilon\omega_A+\lambda_0)- N_F^\zeta)^\delta}\int_{t_0}^t \frac{\textnormal{e}^{(t-s)\varepsilon^{-1} (\varepsilon\omega_A+\lambda_0)}}{\varepsilon^\gamma (t-s)^{1-\gamma}} \textnormal{e}^{\eta s} \, \textnormal{d}s \|v_0\|_{Y_1}\\
    &\quad +M_A L_{\tilde f} \textnormal{e}^{\varepsilon^{-1}(\varepsilon\omega_A+\lambda_0)(t-t_0)+\eta t_0}  \|\big(\tilde A_\varepsilon\big)^{-1}\|_{\mathcal{B}(X_\gamma,X_1)} \| h^0(\bar v_S),0,\bar v_S\|_{C_\eta}\\
    &\quad +L_{\tilde f} \lambda_0^{-1}\varepsilon \|J\|_{\mathcal{B}(X_\alpha,X_1)} (L_h+1)\| \bar v_S(t)\|_{ Y_1}.
    \intertext{Using the Lipschitz continuity of $h^{\varepsilon,\zeta}$ and the observation of Remark \ref{remark 3.10} yields}
    &\leq C_A L_{\tilde f} \frac{\textnormal{e}^{t_0\varepsilon^{-1} (\varepsilon\eta -(\varepsilon\omega_A+\lambda_0))} \textnormal{e}^{t\varepsilon^{-1} (\varepsilon\omega_A+\lambda_0) } }{-\omega_A +\varepsilon^{-1}\lambda_0 +\eta} L_\zeta\|v_0\|_{Y_1}\\
    &\quad + \varepsilon^{2-\gamma-\alpha}C L_{\tilde f}\frac{\Gamma(\gamma)}{|\varepsilon\omega_A -\lambda_0|^{\gamma}}(1+L_h)\tilde C \textnormal{e}^{\omega_B t} \big(\|Bv_0\|_Y+ L_g(L_h+1)\|v_0\|_Y + \|v_0\|_{Y_1}\big)\\
    &\quad  + C_A L_{\tilde f} \frac{L_\zeta L_g C_B \Gamma(\delta)}{ (\eta -\zeta^{-1}( \varepsilon\omega_A+\lambda_0)- N_F^\zeta)^\delta} \frac{\Gamma(\gamma)}{|\varepsilon\omega_A -\lambda_0|^{\gamma}} \textnormal{e}^{\eta t}\|v_0\|_{Y_1}\\
    &\quad +M_A L_{\tilde f} L_\zeta \varepsilon^{\gamma-1} C\textnormal{e}^{t_0\varepsilon^{-1}(\varepsilon\eta -(\varepsilon\omega_A+\lambda_0))}  \textnormal{e}^{t \varepsilon^{-1}(\varepsilon\omega_A+\lambda_0)} \| v_0\|_{Y_1}\\
    &\quad +L_{\tilde f} \lambda_0^{-1}\varepsilon^{1-\alpha} C (L_h+1)M_B \textnormal{e}^{\omega_B t} \textnormal{e}^{C_BL_g(1+L_h)\Gamma(\delta)} \|v_0\|_{Y_1}\\
    &\quad+ C_A L_{\tilde f} \int_{t_0}^t \frac{\textnormal{e}^{(t-s)\varepsilon^{-1} (\varepsilon\omega_A+\lambda_0)}}{\varepsilon^\gamma (t-s)^{1-\gamma}} \|h^{\varepsilon,\zeta}_{X_1}(\bar v_S(s))-h^0(\bar v_S(s))\|_{X_\alpha}\, \textnormal{d}s.
\end{align*}
We observe that the right-hand side of the inequality is of the form
\begin{align*}
     \|h^{\varepsilon,\zeta}_{X_1}(\bar v_S)&- h^0(\bar v_S)\|_{X_\alpha} \leq R(t) + C_A L_{\tilde f} \int_{t_0}^t \frac{\textnormal{e}^{(t-s)\varepsilon^{-1} (\varepsilon\omega_A+\lambda_0)}}{\varepsilon^\gamma (t-s)^{1-\gamma}} \|h^{\varepsilon,\zeta}_{X_1}(\bar v_S(s))-h^0(\bar v_S(s))\|_{X_\alpha}\, \textnormal{d}s,
\end{align*}
  where $R(t)$ is non-decreasing in time and thus
 \begin{align*}
     \|h^{\varepsilon,\zeta}_{X_1}(\bar v_S(t))&- h^0(\bar v_S(t))\|_{X_\alpha}  \leq C_A L_{\tilde f} \frac{\textnormal{e}^{t_0\varepsilon^{-1} (\varepsilon\eta -(\varepsilon\omega_A+\lambda_0))} }{-\omega_A +\varepsilon^{-1}\lambda_0 +\eta} L_\zeta\|v_0\|_{Y_1}\\
    &\quad + \varepsilon^{2-\gamma-\alpha} C L_{\tilde f}\frac{\Gamma(\gamma)}{|\varepsilon\omega_A -\lambda_0|^{\gamma}}(1+L_h)\tilde C \big(\|Bv_0\|_Y+ L_g(L_h+1)\|v_0\|_Y + \|v_0\|_{Y_1}\big)\\
    &\quad  + C_A L_{\tilde f} \frac{L_\zeta L_g C_B \Gamma(\delta)}{ (\eta -\zeta^{-1}( \varepsilon\omega_A+\lambda_0)- N_F^\zeta)^\delta} \frac{\Gamma(\gamma)}{|\varepsilon\omega_A -\lambda_0|^{\gamma}} \|v_0\|_{Y_1}\\
    &\quad +M_A L_{\tilde f}  L_\zeta \varepsilon^{\gamma-1}\textnormal{e}^{t_0\varepsilon^{-1}(\varepsilon\eta -(\varepsilon\omega_A+\lambda_0))}  \| v_0\|_{Y_1}\\
    &\quad +L_{\tilde f} \lambda_0^{-1}\varepsilon^{1-\alpha} C (L_h+1)M_B \textnormal{e}^{C_BL_g(1+L_h)\Gamma(\delta)} \|v_0\|_{Y_1}.
 \end{align*}
Using the assumption that $\varepsilon \zeta^{-1}<1$ we have that $\varepsilon\eta -(\varepsilon\omega_A+\lambda_0)>0$. Hence, we can let $t_0\to -\infty$ and obtain
\begin{align*}
    C_1^{-1}  \|h^{\varepsilon,\zeta}_{X_1}(\bar v_S(t))&- h^0(\bar v_S(t))\|_{X_1}  \leq \\
    & \leq  \varepsilon^{2-\gamma-\alpha} L_{\tilde f}C\frac{\Gamma(\gamma)}{|\varepsilon\omega_A -\lambda_0|^{\gamma}}(1+L_h)\tilde C \big(\|Bv_0\|_Y+ L_g(L_h+1)\|v_0\|_Y + \|v_0\|_{Y_1}\big)\\
    &\quad  + C_A L_{\tilde f} \frac{L_\zeta L_g C_B \Gamma(\delta)}{ (\eta -\zeta^{-1}( \varepsilon\omega_A+\lambda_0)- N_F^\zeta)^\delta} \frac{\Gamma(\gamma)}{|\varepsilon\omega_A -\lambda_0|^{\gamma}} \|v_0\|_{Y_1}\\
    &\quad +L_{\tilde f} \lambda_0^{-1}\varepsilon^{1-\alpha} C (L_h+1)M_B \textnormal{e}^{C_BL_g(1+L_h)\Gamma(\delta)} \|v_0\|_{Y_1}.
\end{align*}
 Focusing on the terms in $\varepsilon$ and using $\eta= \zeta^{-1}(\varepsilon\omega_A+\lambda_0) +\frac{ N_F^\zeta+ N_S^\zeta}{2}$ yields the desired estimate
 \begin{align*}
      \|h^{\varepsilon,\zeta}_{X_1}(\bar v_S(t))- h^0(\bar v_S(t))\|_{X_\alpha}  &\leq C\bigg( \varepsilon^{1-\alpha} +\frac{1}{(N_S^\zeta-N_F^\zeta)^\delta}\bigg) \|v_0\|_{Y_1}.
 \end{align*}
 This shows statement (GF2) of the generalized Fenichel Theorem \ref{general fenichel thm}.
\end{proof}

\begin{remark}
    If we have additional knowledge on the convergence of $N_S^\zeta-N_F^\zeta$ we can improve the above result.
    Assuming that we have the  asymptotic $N_S^\zeta-N_F^\zeta= C \zeta^{-\beta}$, for some $\beta>0$ then we obtain for $\varepsilon>0$ small enough
    \begin{align*}
        \|h^{\varepsilon,\zeta}_{X_1}(\bar v_S(t))- h^0(\bar v_S(t))\|_{X_1} &\leq C\bigg(\min\{ \varepsilon^{1-\gamma},\varepsilon^{\alpha-1}\} +\zeta^\beta \bigg) \|v_0\|_{Y_\alpha},
    \end{align*}
    where we used the condition $\varepsilon\zeta^{-1}<1$;
    for more background on the importance of double limits in singular perturbations we also refer to~\cite{KUEHNdouble}.
    \end{remark}

\subsection{Attraction of trajectories and approximation by the slow flow}

One technical result is to show the differentiability of the slow manifold.
Here, we use the assumption that the nonlinearities $f$ and $g$ are continuously differentiable such that
\begin{align*}
    \|\textnormal{D} f(x,y)\|_{\mathcal{B}(X_1\times Y_1,X_\gamma)}\leq L_f,\qquad \|\textnormal{D} g(x,y)\|_{\mathcal{B}(X_1\times Y_1,Y_\delta)}\leq L_g.
\end{align*}
\begin{prop}
Under the previous assumptions in this section the slow manifold is differentiable, i.e. the mapping
\begin{align*}
    (Y_S^\zeta,\|\cdot\|_{Y_1})\to (X_1,\|\cdot\|_{X_1})\times (Y_F^\zeta,\|\cdot\|_{Y_1}),\, v_0\mapsto  (h_{X_1}^{\varepsilon,\zeta}(v_0),h_{Y_F^\zeta}^{\varepsilon,\zeta}(v_0))
\end{align*}
is differentiable.
\end{prop}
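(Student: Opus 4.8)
The strategy is the standard differentiability argument for Lyapunov--Perron graphs (cf.~\cite{henry2006geometric,chow1991smooth} and, in the present fast-slow framework, \cite{hummel2022slow}). Write $\Psi(v_0):=(u^{v_0},v_F^{v_0},v_S^{v_0})\in C_\eta$ for the unique fixed point of $\mathcal{L}_{v_0,\varepsilon,\zeta}$ from Proposition~\ref{Prop 4.2}, so that $h^{\varepsilon,\zeta}(v_0)=(u^{v_0}(0),v_F^{v_0}(0))$. Since the evaluation map $C_\eta\ni(u,v_F,v_S)\mapsto(u(0),v_F(0))\in X_1\times(Y_F^\zeta\cap Y_1)$ is bounded and linear, it suffices to show that $v_0\mapsto\Psi(v_0)$ is a $C^1$ map from $(Y_S^\zeta\cap Y_1,\|\cdot\|_{Y_1})$ to $C_\eta$; the derivative of $h^{\varepsilon,\zeta}$ is then obtained by composing with the evaluation at $t=0$. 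As in the proof of Proposition~\ref{Prop 4.2}, we work with the modified system \eqref{modified fast reaction}, replacing $A$ by $\varepsilon^{-1}\tilde A_\varepsilon$ and $f$ by $\tilde f$.

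\textbf{Candidate derivative.} Differentiating the fixed-point identity $\Psi(v_0)=\mathcal{L}_{v_0,\varepsilon,\zeta}(\Psi(v_0))$ formally in a direction $w\in Y_S^\zeta\cap Y_1$ yields a \emph{linear} integral equation for the putative derivative $(\phi,\psi_F,\psi_S):=\textnormal{D}\Psi(v_0)w$, namely
\begin{align*}
\phi(t)&=\varepsilon^{-1}\!\!\int_{-\infty}^t\!\! \textnormal{e}^{(t-s)\varepsilon^{-1}\tilde A_\varepsilon}\,\textnormal{D}\tilde f(\Psi(v_0)(s))\big(\phi(s),\psi_F(s),\psi_S(s)\big)\,\textnormal{d}s,\\
\psi_F(t)&=\int_{-\infty}^t \textnormal{e}^{(t-s)B}\pr_{Y_F^\zeta}\textnormal{D} g(\Psi(v_0)(s))\big(\phi(s),\psi_F(s),\psi_S(s)\big)\,\textnormal{d}s,\\
\psi_S(t)&=\textnormal{e}^{tB}w+\int_0^t \textnormal{e}^{(t-s)B}\pr_{Y_S^\zeta}\textnormal{D} g(\Psi(v_0)(s))\big(\phi(s),\psi_F(s),\psi_S(s)\big)\,\textnormal{d}s .
\end{align*}
Denote by $\mathcal{M}_{v_0,w}$ the affine operator on $C_\eta$ defined by the right-hand side. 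Its linear part has exactly the integral-kernel structure estimated in Propositions~\ref{Prop 4.2} and the subsequent one, with $\textnormal{D}\tilde f,\textnormal{D} g$ in place of the difference quotients; using the uniform bounds $\|\textnormal{D} f\|_{\mathcal{B}(X_1\times Y_1,X_\gamma)}\le L_f$, $\|\textnormal{D} g\|_{\mathcal{B}(X_1\times Y_1,Y_\delta)}\le L_g$, the semigroup estimates for $\tilde A_\varepsilon$ and $B$ on the respective subspaces, the substitution $s\mapsto\varepsilon s$ in the fast integral, and the choice $\eta=\zeta^{-1}(\varepsilon\omega_A+\lambda_0)+\tfrac12(N_S^\zeta+N_F^\zeta)$ together with $\varepsilon\zeta^{-1}<1$, the linear part has operator norm bounded by the same constant $L<1$ appearing in the Lipschitz proposition above. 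Hence $\mathcal{M}_{v_0,w}$ is a contraction on $C_\eta$, its unique fixed point $\textnormal{D}\Psi(v_0)w$ depends linearly and boundedly on $w$ with $\|\textnormal{D}\Psi(v_0)\|\le M_B/(1-L)=L_\zeta$.

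\textbf{Identifying it as the Fr\'echet derivative.} To show that the operator just constructed is genuinely the derivative and depends continuously on $v_0$, we invoke the fiber-contraction principle: consider the map on $C_\eta\times\mathcal{B}(Y_S^\zeta\cap Y_1,C_\eta)$ sending $(\Xi,\Theta)$ to $\big(\mathcal{L}_{v_0,\varepsilon,\zeta}(\Xi),\,\Theta\mapsto$ the linearization of $\mathcal{L}_{v_0,\varepsilon,\zeta}$ at $\Xi$ applied to $\Theta\big)$; this is a fiber contraction over the base contraction $\mathcal{L}_{v_0,\varepsilon,\zeta}$, its iterates converge, and its fixed point is $(\Psi(v_0),\textnormal{D}\Psi(v_0))$. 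Continuity of $v_0\mapsto\Psi(v_0)$ (from the uniform contraction estimate, which is already implicit in Proposition~\ref{Prop 4.6}) together with the continuity of $\textnormal{D} f,\textnormal{D} g$ and their uniform continuity on the relevant bounded sets propagates to continuity of $v_0\mapsto\textnormal{D}\Psi(v_0)$, giving $\Psi\in C^1$; alternatively one estimates the difference quotient $\|\Psi(v_0+w)-\Psi(v_0)-\textnormal{D}\Psi(v_0)w\|_{C_\eta}$ directly, exploiting that $\tilde f,g\in C^1$ so that $\tilde f(\cdot)-\tilde f(\cdot)-\textnormal{D}\tilde f(\cdot)[\cdot]=o(\|\cdot\|)$, and closes the estimate via the same contraction constant $L<1$. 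Evaluating at $t=0$ then yields that $v_0\mapsto(h_{X_1}^{\varepsilon,\zeta}(v_0),h_{Y_F^\zeta}^{\varepsilon,\zeta}(v_0))$ is differentiable with derivative $w\mapsto(\phi(0),\psi_F(0))$.

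\textbf{Main obstacle.} The delicate point is not the abstract fiber-contraction machinery but the $\varepsilon$-bookkeeping in the fast component of the linearized equation: one must check that, after inserting the sharp bound $\|\textnormal{e}^{t\tilde A_\varepsilon}\|_{\mathcal{B}(X_\gamma,X_1)}\le C_{\tilde A}t^{\gamma-1}\textnormal{e}^{(\varepsilon\omega_A+\lambda_0)t}$ and rescaling, the $\varepsilon^{-1}$-prefactor is absorbed and the resulting bound is precisely the first term of $L$, so the derivative equation inherits the contraction property under the standing conditions $\varepsilon\zeta^{-1}<1$ and the prescribed $\eta$. The secondary subtlety is handling consistently the asymmetry between perturbing in $(Y_S^\zeta,\|\cdot\|_{Y_1})$ and landing in $X_1\times Y_1$, which is exactly where the $C^1$-regularity (rather than mere Lipschitz continuity) of $f$ and $g$ is used.
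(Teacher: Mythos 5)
Your argument is correct and is essentially the standard Lyapunov--Perron differentiability proof (formal linearization of the fixed-point equation, contraction of the linearized operator with the same constant $L<1$, identification via the fiber-contraction principle) that the paper itself does not reproduce but simply delegates to \cite[Prop.\ 5.4]{hummel2022slow}. Your $\varepsilon$-bookkeeping for the fast integral and the choice of $\eta$ match the estimates already established in Proposition \ref{Prop 4.2} and the Lipschitz proposition, so the proposal fills in precisely the details the paper outsources.
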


\begin{proof}
The proof can be found in \cite[Prop 5.4]{hummel2022slow} where the key idea is to show that the
derivative exists as the best local linear approximation of the graph of the manifold.
\end{proof}
This result together with Proposition \ref{Prop 4.6} yields the proof of statement (GF3) in Theorem \ref{general fenichel thm}.
Moreover, this will allow us to prove the attraction of trajectories and the approximation by the slow flow which is the goal of the reminder of the section.\\

Next, let $(h_{X_1}^{\varepsilon,\zeta}(v_0),h_{Y_F^\zeta}^{\varepsilon,\zeta}(v_0),v_0) \in S_{\varepsilon,\zeta}$.
Moreover, let $(u,v_F,v_S)$ be the strict solution of \eqref{full eq splitting} with initial data $(h_{X_1}^{\varepsilon,\zeta}(v_0),h_{Y_F^\zeta}^{\varepsilon,\zeta}(v_0),v_0)$, i.e. it holds that
\begin{align*}
  \partial_t u(t)= \frac{1}{\varepsilon}\tilde A_\varepsilon u(t) +\frac{1}{\varepsilon}\tilde{f} (u(t),v_F(t),v_S(t)) 
\end{align*}
and let $(u^\varepsilon,v_F^\varepsilon,v_S^\varepsilon)$ be the strict solution to \eqref{full eq splitting} with initial data $(u_0,v_{0,F},v_{0,S})$.
From the invariance and differentiability of the slow manifold it follows that $u(t)= h_{X_1}^{\varepsilon,\zeta}(v_S(t))$ and therefore
\begin{align*}
    \partial_t u(t)&= \partial_t  h_{X_1}^{\varepsilon,\zeta}(v_S(t))= \textnormal{D} h_{X_1}^{\varepsilon,\zeta}(v_S(t))[\partial_t v_S(t)]\\
    &=\textnormal{D} h_{X_1}^{\varepsilon,\zeta}(v_S(t))[B v_S(t)+ \pr_{Y_S^\zeta} g(u(t),v_F(t),v_S(t))].
\end{align*}
Combining both equations for $t=0$ yields that
\begin{align}\label{eq 4.4}
    h_{X_1}^{\varepsilon,\zeta} (v_0)= \varepsilon (\tilde A_\varepsilon)^{-1} \textnormal{D} h_{X_1}^{\varepsilon,\zeta}(v_0) \big[B v_0+ \pr_{Y_S^\zeta} g(h_{X_1}^{\varepsilon,\zeta}(v_0) ,h_{Y_F^\zeta}^{\varepsilon,\zeta}(v_0),v_0)\big] - (\tilde A_\varepsilon)^{-1} f( h_{X_1}^{\varepsilon,\zeta}(v_0), h_{Y_F^\zeta}^{\varepsilon,\zeta}(v_0),v_0).
\end{align}
Similarly we obtain
\begin{align}\label{eq 4.5}
    h_{Y_F^\zeta}^{\varepsilon,\zeta} (v_0)= B_{Y_F^\zeta}^{-1} \textnormal{D} h_{Y_F^\zeta}^{\varepsilon,\zeta}(v_0) \big[B v_0+ \pr_{Y_S^\zeta} g(h_{X_1}^{\varepsilon,\zeta}(v_0) ,h_{Y_F^\zeta}^{\varepsilon,\zeta}(v_0),v_0)\big] - B_{Y_F^\zeta}^{-1}\pr_{Y_F^\zeta} g( h_{X_1}^{\varepsilon,\zeta}(v_0), h_{Y_F^\zeta}^{\varepsilon,\zeta}(v_0),v_0).
\end{align}
We note that these two relations hold for arbitrary $v_0\in Y_S^\zeta$.
Therefore it also holds for $ v_0= v^\varepsilon_S(t)$ for fixed $t\geq 0$ and we have
\begin{align}\label{eq 4.6}
    \partial_t  h_{X_1}^{\varepsilon,\zeta}(v^\varepsilon_S(t)) &= \textnormal{D}   h_{X_1}^{\varepsilon,\zeta}(v^\varepsilon_S(t))\big[B v^\varepsilon_S(t)+ \pr_{Y_S^\zeta} g(u^\varepsilon(t),v^\varepsilon_F(t),v^\varepsilon_S(t))\big],\\ \label{eq 4.7}
     \partial_t  h_{Y_F^\zeta}^{\varepsilon,\zeta}(v^\varepsilon_S(t)) &= \textnormal{D}   h_{Y_F^\zeta}^{\varepsilon,\zeta}(v^\varepsilon_S(t))\big[B v^\varepsilon_S(t)+ \pr_{Y_S^\zeta} g(u^\varepsilon(t),v^\varepsilon_F(t),v^\varepsilon_S(t))\big].
\end{align}
Thus, we can express the solution as a flow on the slow manifold.

The next step in the generalization of the Fenichel theorem is to show that the fast components of the fast-reaction system approach the solutions on the slow manifold with an exponential rate in time.
\begin{prop}\label{Prop 4.9}
For $\varepsilon$ and $\zeta$ small enough satisfying $\varepsilon\zeta^{-1}<1$ there exist constants $C,\, c>0$ such that
\begin{align*}
    \bigg\|\begin{pmatrix}
    u^\varepsilon(t)-h_{X_1}^{\varepsilon,\zeta}(v^\varepsilon_S(t))\\ v^\varepsilon_F(t) -h_{Y_F^\zeta}^{\varepsilon,\zeta}(v^\varepsilon_S(t))
    \end{pmatrix} \bigg\|_{X_1\times Y_1} \leq C \textnormal{e}^{-ct} \bigg\|\begin{pmatrix}
    u_0- h_{X_1}^{\varepsilon,\zeta}(v_{0,S})\\ v_{0,F}-h_{X_1}^{\varepsilon,\zeta} (v_{0,S})
    \end{pmatrix}\bigg\|_{X_1\times Y_1}.
\end{align*}
\end{prop}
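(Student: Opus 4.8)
The plan is to measure the distance of a genuine trajectory $(u^\varepsilon,v_F^\varepsilon,v_S^\varepsilon)$ of \eqref{full eq splitting} from its ``shadow'' on the slow manifold, namely from $\bar u(t):=h_{X_1}^{\varepsilon,\zeta}(v_S^\varepsilon(t))$ and $\bar v_F(t):=h_{Y_F^\zeta}^{\varepsilon,\zeta}(v_S^\varepsilon(t))$, and to show that $(w,z):=(u^\varepsilon-\bar u,\,v_F^\varepsilon-\bar v_F)$ decays exponentially. First I would derive closed evolution equations for $w$ and $z$. Differentiability of $h^{\varepsilon,\zeta}$, the chain rule, and the slow equation $\partial_t v_S^\varepsilon=Bv_S^\varepsilon+\pr_{Y_S^\zeta}g(u^\varepsilon,v_F^\varepsilon,v_S^\varepsilon)$ give $\partial_t\bar u=\textnormal{D} h_{X_1}^{\varepsilon,\zeta}(v_S^\varepsilon)\big[Bv_S^\varepsilon+\pr_{Y_S^\zeta}g(u^\varepsilon,v_F^\varepsilon,v_S^\varepsilon)\big]$ and likewise for $\bar v_F$; applying $\tilde A_\varepsilon$ resp.\ $B_{Y_F^\zeta}$ to the invariance identities \eqref{eq 4.4} and \eqref{eq 4.5} evaluated along $v_S^\varepsilon(t)$ then rewrites these right-hand sides (with $g$ taken at $(\bar u,\bar v_F,v_S^\varepsilon)$) as $\tfrac1\varepsilon\tilde A_\varepsilon\bar u+\tfrac1\varepsilon\tilde f(\bar u,\bar v_F,v_S^\varepsilon)$ and $B_{Y_F^\zeta}\bar v_F+\pr_{Y_F^\zeta}g(\bar u,\bar v_F,v_S^\varepsilon)$. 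Subtracting from the equations for $u^\varepsilon$ and $v_F^\varepsilon$ yields the coupled system
\begin{align*}
\partial_t w&=\tfrac1\varepsilon\tilde A_\varepsilon w+\tfrac1\varepsilon\big[\tilde f(u^\varepsilon,v_F^\varepsilon,v_S^\varepsilon)-\tilde f(\bar u,\bar v_F,v_S^\varepsilon)\big]-\textnormal{D} h_{X_1}^{\varepsilon,\zeta}(v_S^\varepsilon)\big[\pr_{Y_S^\zeta}\big(g(u^\varepsilon,v_F^\varepsilon,v_S^\varepsilon)-g(\bar u,\bar v_F,v_S^\varepsilon)\big)\big],\\
\partial_t z&=B_{Y_F^\zeta}z+\pr_{Y_F^\zeta}\big[g(u^\varepsilon,v_F^\varepsilon,v_S^\varepsilon)-g(\bar u,\bar v_F,v_S^\varepsilon)\big]-\textnormal{D} h_{Y_F^\zeta}^{\varepsilon,\zeta}(v_S^\varepsilon)\big[\pr_{Y_S^\zeta}\big(g(u^\varepsilon,v_F^\varepsilon,v_S^\varepsilon)-g(\bar u,\bar v_F,v_S^\varepsilon)\big)\big],
\end{align*}
in which each nonlinear difference is bounded by $L_{\tilde f}(\|w\|_{X_1}+\|z\|_{Y_1})$ resp.\ $L_g(\|w\|_{X_1}+\|z\|_{Y_1})$, and $\|\textnormal{D} h^{\varepsilon,\zeta}\|$ is controlled by the Lipschitz constant $L_\zeta$ of the preceding propositions. (Should \eqref{eq 4.4} involve $f$ rather than $\tilde f$, the same $\mathcal{O}(\varepsilon)$ perturbation $I-\varepsilon J$ as in the proof of Proposition \ref{Prop 4.6} enters and is absorbed without loss.)

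Secondly I would pass to the Duhamel form of both equations, using the \emph{decaying} forward semigroups $(\textnormal{e}^{t\varepsilon^{-1}\tilde A_\varepsilon})_{t\ge0}$ on $X_1$ and $(\textnormal{e}^{tB})_{t\ge0}$ on $Y_F^\zeta\cap Y_1$, which by the assumptions of Sections \ref{sect 3.1}--\ref{sect 4.1} obey $\|\textnormal{e}^{t\varepsilon^{-1}\tilde A_\varepsilon}\|_{\mathcal{B}(X_\gamma,X_1)}\le C_{\tilde A}(\varepsilon^{-1}t)^{\gamma-1}\textnormal{e}^{\varepsilon^{-1}(\varepsilon\omega_A+\lambda_0)t}$ and $\|\textnormal{e}^{tB}y_F\|_{Y_1}\le C_B t^{\delta-1}\textnormal{e}^{(N_F^\zeta+\zeta^{-1}(\varepsilon\omega_A+\lambda_0))t}\|y_F\|_{Y_\delta}$, both exponents being strictly negative and, for $\varepsilon,\zeta$ small, of order $\mathcal{O}(\varepsilon^{-1})$ resp.\ $\mathcal{O}(\zeta^{-1})$. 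The troublesome prefactor $\varepsilon^{-1}$ in the $w$-integral is neutralized by the substitution $r=(t-s)/\varepsilon$, which gives $\varepsilon^{-1}\int_0^t\|\textnormal{e}^{(t-s)\varepsilon^{-1}\tilde A_\varepsilon}\|_{\mathcal{B}(X_\gamma,X_1)}\,\textnormal{d}s\le C_{\tilde A}\Gamma(\gamma)|\varepsilon\omega_A+\lambda_0|^{-\gamma}$ uniformly in $\varepsilon$. Setting $M(t):=\sup_{0\le s\le t}\textnormal{e}^{cs}(\|w(s)\|_{X_1}+\|z(s)\|_{Y_1})$ for a small $c>0$ and combining the two estimates, one obtains
\begin{align*}
M(t)\le C\big(\|w(0)\|_{X_1}+\|z(0)\|_{Y_1}\big)+\int_0^t\!\Big(\varepsilon^{-\gamma}(t-s)^{\gamma-1}\textnormal{e}^{(\varepsilon^{-1}(\varepsilon\omega_A+\lambda_0)+c)(t-s)}+(t-s)^{\delta-1}\textnormal{e}^{(N_F^\zeta+\zeta^{-1}(\varepsilon\omega_A+\lambda_0)+c)(t-s)}\Big)M(s)\,\textnormal{d}s,
\end{align*}
whose kernel has finite mass once $c$ lies below the two decay rates. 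A singular Gronwall inequality in the spirit of \cite[Lemma 2.10]{hummel2022slow} then produces $\|w(t)\|_{X_1}+\|z(t)\|_{Y_1}\le C\textnormal{e}^{-ct}(\|w(0)\|_{X_1}+\|z(0)\|_{Y_1})$; one may take $c$ as a fixed fraction of $\min\{|\varepsilon^{-1}(\varepsilon\omega_A+\lambda_0)|,\,|N_F^\zeta+\zeta^{-1}(\varepsilon\omega_A+\lambda_0)|\}$ reduced by the Gronwall correction, which remains positive because for $\varepsilon,\zeta$ small this minimum dominates the bounded constants $L_{\tilde f},L_g,C_{\tilde A},C_B,L_\zeta$ entering the correction. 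Since $w(0)=u_0-h_{X_1}^{\varepsilon,\zeta}(v_{0,S})$ and $z(0)=v_{0,F}-h_{Y_F^\zeta}^{\varepsilon,\zeta}(v_{0,S})$, this is precisely the asserted bound.

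The step I expect to be the main obstacle is controlling the cross-coupling: in the $w$-equation the Lipschitz difference of $\tilde f$ carries a factor $\varepsilon^{-1}$, and the feedback term $\textnormal{D} h^{\varepsilon,\zeta}(v_S^\varepsilon)[\pr_{Y_S^\zeta}(g(u^\varepsilon,\cdot)-g(\bar u,\cdot))]$ feeds $(w,z)$ back into \emph{both} equations, so one must ensure that neither spoils the closure of the Gronwall loop. The $\varepsilon^{-1}$ is swallowed by the $\mathcal{O}(\varepsilon)$-width of the fast exponentially decaying kernel via the rescaling above, and the feedback term by the uniform bound $\|\textnormal{D} h^{\varepsilon,\zeta}\|\le L_\zeta$ from the Lipschitz and differentiability propositions, provided that $\textnormal{D} h^{\varepsilon,\zeta}$ actually takes values in the interpolation space on which the semigroup smoothing estimates act. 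One also has to confirm at each use of the $Y_F^\zeta$-bound that $N_F^\zeta+\zeta^{-1}(\varepsilon\omega_A+\lambda_0)<0$, which follows from $\varepsilon\zeta^{-1}<1$ together with $\varepsilon\omega_A+\lambda_0<0$ and the boundedness of $N_F^\zeta$.
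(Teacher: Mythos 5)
Your proposal is correct and follows essentially the same route as the paper: both exploit the invariance identities \eqref{eq 4.4}--\eqref{eq 4.7} to reduce the right-hand sides to Lipschitz differences of $f$ and $g$ evaluated at the trajectory versus its projection onto the slow manifold, pass to Duhamel form with the decaying semigroups $(\textnormal{e}^{t\varepsilon^{-1}\tilde A_\varepsilon})_{t\geq 0}$ and $(\textnormal{e}^{tB})_{t\geq 0}$ on $Y_F^\zeta$, and close the argument with a singular Gronwall inequality for the combined quantity $\|w\|_{X_1}+\|z\|_{Y_1}$. The only cosmetic difference is that the paper performs the integration by parts of $\int_0^t\partial_s\big(\textnormal{e}^{\varepsilon^{-1}(t-s)\tilde A_\varepsilon}h_{X_1}^{\varepsilon,\zeta}(v_S^\varepsilon(s))\big)\,\textnormal{d}s$ directly inside the Duhamel representation rather than first writing down the evolution equations for $w$ and $z$, which is an equivalent manipulation.
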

\begin{proof}
Starting with the first component we compute
\begin{align*}
    u^\varepsilon(t)-h_{X_1}^{\varepsilon,\zeta}(v_S^\varepsilon(t))&= \textnormal{e}^{\varepsilon^{-1}t \tilde A_\varepsilon}(u_0-h_{X_1}^{\varepsilon,\zeta}(v_{0,S}))+\textnormal{e}^{\varepsilon^{-1}t \tilde A_\varepsilon}h_{X_1}^{\varepsilon,\zeta}(v_{0,S}) - h_{X_1}^{\varepsilon,\zeta}(v_S^\varepsilon(t))\\
    &\quad+ \varepsilon^{-1} \int_0^t \textnormal{e}^{\varepsilon^{-1}(t-s) \tilde A_\varepsilon} \tilde f(u^\varepsilon(s),v^\varepsilon_F(s),v^\varepsilon_S(s))\, \textnormal{d}s\\
    &= \textnormal{e}^{\varepsilon^{-1}t \tilde A_\varepsilon}(u_0-h_{X_1}^{\varepsilon,\zeta}(v_{0,S})) - \int_0^t \partial_s \big(\textnormal{e}^{\varepsilon^{-1}(t-s) \tilde A_\varepsilon}h_{X_1}^{\varepsilon,\zeta}(v^\varepsilon_S(s))\big) \, \textnormal{d}s\\
    &\quad+ \varepsilon^{-1} \int_0^t \textnormal{e}^{\varepsilon^{-1}(t-s) \tilde A_\varepsilon} \tilde f(u^\varepsilon(s),v^\varepsilon_F(s),v^\varepsilon_S(s))\, \textnormal{d}s\\
    &=\textnormal{e}^{\varepsilon^{-1}t \tilde A_\varepsilon}(u_0-h_{X_1}^{\varepsilon,\zeta}(v_{0,S})) + \varepsilon^{-1}\int_0^t  \textnormal{e}^{\varepsilon^{-1}(t-s) \tilde A_\varepsilon}  \tilde A_\varepsilon h_{X_1}^{\varepsilon,\zeta}(v^\varepsilon_S(s)) \, \textnormal{d}s\\
    &\quad-\int_0^t \textnormal{e}^{\varepsilon^{-1}(t-s) \tilde A_\varepsilon} \partial_s h_{X_1}^{\varepsilon,\zeta}(v^\varepsilon_S(s)) \, \textnormal{d}s + \varepsilon^{-1} \int_0^t \textnormal{e}^{\varepsilon^{-1}(t-s) \tilde A_\varepsilon} \tilde f(u^\varepsilon(s),v^\varepsilon_F(s),v^\varepsilon_S(s))\, \textnormal{d}s.
    \intertext{Applying relations \eqref{eq 4.4} and \eqref{eq 4.6} yields}
    &=\textnormal{e}^{\varepsilon^{-1}t \tilde A_\varepsilon}(u_0-h_{X_1}^{\varepsilon,\zeta}(v_{0,S})) \\
    &\quad+ \int_0^t \textnormal{e}^{\varepsilon^{-1}(t-s) \tilde A_\varepsilon} \textnormal{D}h_{X_1}^{\varepsilon,\zeta}(v^\varepsilon_S(s))\pr_{Y_S^\zeta}\big[g(u^\varepsilon,v^\varepsilon_F,v^\varepsilon_S)-g(h_{X_1}^{\varepsilon,\zeta}(v^\varepsilon_S(s)),h_{Y_F^\zeta}^{\varepsilon,\zeta}(v^\varepsilon_S(s)),v^\varepsilon_S(s))\big]\, \textnormal{d}s\\
    &\quad + \varepsilon^{-1}\int_0^t \textnormal{e}^{\varepsilon^{-1}(t-s) \tilde A_\varepsilon} \big[f(u^\varepsilon,v^\varepsilon_F,v^\varepsilon_S)-f(h_{X_1}^{\varepsilon,\zeta}(v^\varepsilon_S(s)),h_{Y_F^\zeta}^{\varepsilon,\zeta}(v^\varepsilon_S(s)),v^\varepsilon_S(s))\big]\, \textnormal{d}s.
\end{align*}
Similarly, we obtain for the second component
\begin{align*}
    v^\varepsilon_F(t)-h_{Y_F^\zeta}^{\varepsilon,\zeta}(v_S^\varepsilon(t))&= \textnormal{e}^{t B}(v_{0,F}-h_{Y_F^\zeta}^{\varepsilon,\zeta}(v_{0,S})) \\
    &\quad+ \int_0^t \textnormal{e}^{(t-s) B} \textnormal{D}h_{Y_F^\zeta}^{\varepsilon,\zeta}(v^\varepsilon_S(s))\pr_{Y_S^\zeta}\big[g(u^\varepsilon,v^\varepsilon_F,v^\varepsilon_S)-g(h_{X_1}^{\varepsilon,\zeta}(v^\varepsilon_S(s)),h_{Y_F^\zeta}^{\varepsilon,\zeta}(v^\varepsilon_S(s)),v^\varepsilon_S(s))\big]\, \textnormal{d}s\\
    &\quad + \int_0^t \textnormal{e}^{(t-s) B} \pr_{Y_F^\zeta} \big[g(u^\varepsilon,v^\varepsilon_F,v^\varepsilon_S)-g(h_{X_1}^{\varepsilon,\zeta}(v^\varepsilon_S(s)),h_{Y_F^\zeta}^{\varepsilon,\zeta}(v^\varepsilon_S(s)),v^\varepsilon_S(s))\big]\, \textnormal{d}s.
\end{align*}
Setting $z(t)= \| u^\varepsilon(t)-h_{X_1}^{\varepsilon,\zeta}(v_S^\varepsilon(t))\|_{X_1}+ \| v^\varepsilon_F(t)-h_{Y_F^\zeta}^{\varepsilon,\zeta}(v_S^\varepsilon(t))\|_{Y_1}$
we can estimate the two components as follows
\begin{align*}
    z(t)&\leq M_A \textnormal{e}^{\varepsilon^{-1}(\varepsilon\omega_A+\lambda_0)}  \| u^\varepsilon(t)-h_{X_1}^{\varepsilon,\zeta}(v_S^\varepsilon(t))\|_{X_1} + M_B  \textnormal{e}^{(\zeta^{-1}(\varepsilon\omega_A+\lambda_0) +N_F^\zeta)t} \| v^\varepsilon_F(t)-h_{Y_F^\zeta}^{\varepsilon,\zeta}(v_S^\varepsilon(t))\|_{Y_1} \\
    &\quad + C_A L_f(\varepsilon L_g+1) \int_0^t \frac{\textnormal{e}^{\varepsilon^{-1}(\varepsilon\omega_A -\lambda_0)(t-s)}}{\varepsilon^\gamma (t-s)^{1-\gamma}} z(s)\, \textnormal{d}s\\
    &\quad + C_B L_g( L_g+1) \int_0^t \frac{\textnormal{e}^{(\zeta^{-1}(\varepsilon\omega_A+\lambda_0) +N_F^\zeta )(t-s)}}{(t-s)^{1-\delta}} z(s)\, \textnormal{d}s.
\end{align*}
Applying a version of Gronwall's inequality yields
\begin{align*}
    z(t)\leq C \textnormal{e}^{-ct}\bigg\|\begin{pmatrix}
    u_0- h_{X_1}^{\varepsilon,\zeta}(v_{0,S})\\ v_{0,F}-h_{X_1}^{\varepsilon,\zeta} (v_{0,S})
    \end{pmatrix}\bigg\|_{X_1\times Y_1}.
\end{align*}
This completes the proof.
\end{proof}

The last step in the generalization of Fenichel's theorem is to show that the semi-slow flow is a good approximation of the semi-flow of the original fast-reaction system.

So far we have measured the distance of the slow manifold to the subset $S_{0,\zeta}$ of the critical manifold given by
\begin{align*}
    S_{0,\zeta}:=\{ (h^0(v_0),v_0)\in S_0\,:\, \pr_{Y_F^\zeta}v_0=0\}.
\end{align*}
However, this subset is in general not invariant under the slow flow. 
Hence, we introduce the reduced slow subsystem
\begin{align}
\label{eq. reduced slow}
\begin{split}
    0&= f(h^0(v^0_\zeta(t)),v^0_\zeta(t)),\\
    0&= \pr_{Y_F^\zeta} v^0_\zeta(t),\\
    \partial_t v^0_\zeta(t)&= B v^0_\zeta(t)+ \pr_{Y_S^\zeta} g(h^0(v^0_\zeta(t)),v^0_\zeta(t)),\\
    v^0_\zeta(0)&= \pr_{Y_S^\zeta}v_0,
\end{split}
\end{align}
 and we observe that $S_{0,\zeta}$ is forward invariant under the flow generated by the slow subsystem.

\begin{prop}
For all $T>0$, there exists a constant $C>0$ such that for all $t\in [0,T]$ and all $\zeta>0$ small enough the following holds
\begin{align*}
    \|v^0(t)-v^0_\zeta(t)\|_{Y_1} \leq  C\bigg(\|\pr_{Y_F^\zeta} v_0\|_{Y_1}+ \frac{\|v_0\|_{Y_1}}{(\omega_B -\zeta^{-1}\lambda_0 -N_F^\zeta)^\delta}\bigg).
\end{align*}
\end{prop}
\begin{proof}
The well-posedness of the slow subsystem follows from Prop \ref{Prop 3.2} and thus we can write the solution as
\begin{align*}
    v^0_\zeta(t)= \textnormal{e}^{tB} \pr_{Y_S^\zeta} v_0+ \int_0^t \textnormal{e}^{(t-s)B} \pr_{Y_S^\zeta} g(h^0(v^0_\zeta(s)),v^0_\zeta(s))\, \textnormal{d}s.
\end{align*}
Hence
\begin{align*}
    v^0(t)-v^0_\zeta(t)=& \textnormal{e}^{tB} \pr_{Y_F^\zeta} v_0+ \int_0^t \textnormal{e}^{(t-s)B} \pr_{Y_S^\zeta}\big( g(h^0(v^0(s)),v^0(s))-g(h^0(v^0_\zeta(s)),v^0_\zeta(s))\big)\, \textnormal{d}s\\
    &- \int \textnormal{e}^{(t-s)B}\pr_{Y_F^\zeta} g(h^0(v^0_\zeta(s)),v^0_\zeta(s))\, \textnormal{d}s.
\end{align*}
Now, we can estimate
\begin{align*}
    \|v^0(t)-v^0_\zeta(t)\|_{Y_1} &\leq M_B \textnormal{e}^{(\zeta^{-1}\lambda_0 +N_F^\zeta)t} \|\pr_{Y_F^\zeta} v_0\|_{Y_1}\\
    &\quad + L_gC_B\int_0^t \frac{\textnormal{e}^{(\zeta^{-1}\lambda_0 +N_F^\zeta)(t-s)}}{(t-s)^{1-\delta}}\big( \|h^0(v^0(s))\|_{X_1}+ \|v^0(s)\|_{Y_1}\big)\, \textnormal{d}s\\
    &\quad+ L_GC_B \int_0^t \frac{\textnormal{e}^{\omega_B(t-s)}}{(t-s)^{1-\delta}} \big( \|h^0(v^0(s))-h^0(v^0_\zeta(s))\|_{X_1}+ \|v^0(s)-v^0_\zeta(s)\|_{Y_1}\big)\, \textnormal{d}s\\
    &\leq M_B \textnormal{e}^{(\zeta^{-1}\lambda_0+N_F^\zeta)t} \|\pr_{Y_F^\zeta} v_0\|_{Y_1}\\
    &\quad+  L_g(L_h+1) M_B C_B \textnormal{e}^{C_BL_g(1+L_h)\Gamma(\delta)} \textnormal{e}^{\omega_B t} \|v_0\|_{Y_1} \int_0^t \frac{\textnormal{e}^{(\zeta^{-1}\lambda_0 +N_F^\zeta-\omega_B)(t-s)}}{(t-s)^{1-\delta}}  \, \textnormal{d}s\\
    &\quad+ L_G (L_h+1) C_B \int_0^t \frac{\textnormal{e}^{\omega_B(t-s)}}{(t-s)^{1-\delta}}  \|v^0(s)-v^0_\zeta(s)\|_{Y_1}\, \textnormal{d}s\\
    &\leq C \textnormal{e}^{\omega_B t}\bigg(\|\pr_{Y_F^\zeta} v_0\|_{Y_1}+ \frac{\|v_0\|_{Y_1}}{(\omega_B -\zeta^{-1}\lambda_0 -N_F^\zeta)^\delta}\bigg)\\
    &\quad+L_G (L_h+1) C_B \int_0^t \frac{\textnormal{e}^{\omega_B(t-s)}}{(t-s)^{1-\delta}}  \|v^0(s)-v^0_\zeta(s)\|_{Y_1}\, \textnormal{d}s.
\end{align*}
Applying Gronwall's inequality yields the desired result.
\end{proof}

\begin{remark}
    In the case that the operator $B$ is dissipative, i.e. $\| \textnormal{e}^{tB}\|_{\mathcal{B}(Y_1)}\leq \textnormal{e}^{\omega_B t}$ with $\omega_B<0$ the above result reduces to 
    \begin{align*}
        \|v^0(t)-v^0_\zeta(t)\|_{Y_1} \leq  C \textnormal{e}^{\omega_B t} \bigg(\|\pr_{Y_F^\zeta} v_0\|_{Y_1}+ \frac{\|v_0\|_{Y_1}}{|\omega_B -\zeta^{-1}\lambda_0 -N_F^\zeta|^\delta}\bigg).
    \end{align*}
\end{remark}

\begin{prop}
For all $T>0$ there exists a constant $C>0$ such that for all $t\in [0,T]$ and all $\zeta>0$ small enough it holds that
\begin{align*}
    \bigg\| \begin{pmatrix}
    u^\varepsilon(t)- h^0(v^0_\zeta(t)\\ v^\varepsilon(t)- v_\zeta^0(t)
    \end{pmatrix}\bigg \|_{X_\alpha \times Y_1} &\leq C  {\textnormal{e}^{\textnormal{e}^{\omega_{\small{B}} t}+ct}} t^\delta   \varepsilon^{1-\alpha}  (\textnormal{e}^{\omega_B t}  + \varepsilon^{-\gamma}t^\gamma\textnormal{e}^{\varepsilon^{-1}(\varepsilon\omega_A+\lambda_0) t}) \|v_0\|_{Y_1}\\
     &\quad +C{\textnormal{e}^{\textnormal{e}^{\omega_{\small{B}} t}+ct}} C M_A\textnormal{e}^{\varepsilon^{-1}(\varepsilon\omega_A+\lambda_0) t}\|u_0- h^0(v_0)\|_{X_1}\\
     &\quad+C \textnormal{e}^{\omega_B t} \textnormal{e}^{\textnormal{e}^{\omega_B t}}\bigg(\|\pr_{Y_F^\zeta} v_0\|_{Y_1}+ \frac{\|v_0\|_{Y_1}}{(\omega_B -\zeta^{-1}\lambda_0 -N_F^\zeta)^\delta}\bigg).
\end{align*}
If in addition the initial values are on the slow manifold it holds that
\begin{align*}
    &\bigg\| \begin{pmatrix}
    u^\varepsilon(t)- h^0(v^0_\zeta(t)\\ v^\varepsilon(t)- v_\zeta^0(t)
    \end{pmatrix}\bigg \|_{X_\alpha \times Y_1}\leq \\
    &\quad\leq C \textnormal{e}^{(\omega_B+c)t}  \|v_0\|_{Y_1}\bigg(\varepsilon^{1-\alpha} t^\delta + \varepsilon^{-\gamma}t^\gamma\textnormal{e}^{\varepsilon^{-1}(\varepsilon\omega_A+\lambda_0)t}+\frac{1}{(\omega_B -\zeta^{-1}\lambda_0 -N_F^\zeta)^\delta}+ \frac{1}{(N_S^\zeta-N_F^\zeta)^\delta}\bigg).
\end{align*} 

\end{prop}
\begin{proof}
The first estimate is a consequence of Theorem \ref{Thm 3.6} and Proposition \ref{Prop 4.9}. 
For the second estimate we apply the triangle inequality to the terms
\begin{align*}
    \|\pr_{Y_F^\zeta} v_0\|_{Y_1}&\leq \|pr_{Y_F^\zeta} v_0-h_{Y_F^\zeta}^{\varepsilon,\zeta}(v_0)\|_{Y_1}+ \|h_{Y_F^\zeta}^{\varepsilon,\zeta}(v_0)\|_{Y_1},\\
    \|u_0-h^0(v_0)\|_{X_1}&\leq \|u_0-h^{\varepsilon,\zeta}_{X_1}(v_0)\|_{X_1}+ \|h^{\varepsilon,\zeta}_{X_1}(v_0)-h^0(v_0)\|_{X_1}.
\end{align*}
The first term in each of the two estimates equals to zero since the initial data is on the slow manifold and the second term can be estimated by Proposition \ref{Prop 4.6}.
This then proofs the fourth and last statement (GF4) of the generalized Fenichel Theorem \ref{general fenichel thm}.
\end{proof}

\begin{remark}
Again we want to put the final result into context. 
To this end we assume that the constant $N_F^\zeta$ is bounded independent of $\zeta$ and that in the asymptotic limit $N_S^\zeta-N_F^\zeta= C \zeta^{-\beta}$.
Then, using the assumption that $\varepsilon \zeta^{-1}<1$ we obtain
\begin{align*}
   &\bigg\| \begin{pmatrix}
    u^\varepsilon(t)- h^0(v^0_\zeta(t)\\ v^\varepsilon(t)- v_\zeta^0(t)
    \end{pmatrix}\bigg \|_{X_\alpha \times Y_1}\leq 
    C \textnormal{e}^{(\omega_B+c)t}  \|v_0\|_{Y_1}\big(\varepsilon^{\alpha-1}  + \varepsilon^{-\gamma}\textnormal{e}^{\varepsilon^{-1}(\varepsilon\omega_A+\lambda_0)t}+\zeta^{\beta} +\zeta^{\beta\delta} \big).  
\end{align*}
Letting $\varepsilon,\,\zeta\to 0$ yields an explicit rate for the convergence of the semi-flow fast-reaction system to the semi-flow of the slow subsystem.

\end{remark}

\subsection{Example}\label{subsection example}

As a final step we apply our method to a model from biology, the Beddington-DeAngelis type predator-prey model \cite{cantrell2001dynamics,conforto2018reaction}.
Let $\Omega\subset \mathbb{R}^n$, then the system reads
\begin{align*}
    \partial_t p_h^\varepsilon &= d_h \Delta p_h^\varepsilon+\frac{1}{\varepsilon}\big(a N^\varepsilon p_s^\varepsilon -\tilde \gamma p_h^\varepsilon\big) -\mu p_h^\varepsilon,\\
    \partial_t p_s^\varepsilon &= d_s \Delta p_s^\varepsilon-\frac{1}{\varepsilon}\big(a N^\varepsilon p_s^\varepsilon -\tilde \gamma p_h^\varepsilon\big )-\mu p_s^\varepsilon +\Gamma p^\varepsilon_h,\\
    \partial_t N^\varepsilon &= d_N \Delta N^\varepsilon +r_0(1-\eta N^\varepsilon)N^\varepsilon- a N^\varepsilon p_s^\varepsilon,
\end{align*}
subject to homogeneous Neumann boundary conditions.

To bring this system in the form of \eqref{full eq} we introduce a new variable $P^\varepsilon= p_h^\varepsilon+p_s^\varepsilon$ and add the first and second equation to obtain the evolution of $P^\varepsilon$. Moreover, we assume that the diffusion coefficients satisfy $d_h=d_s$. This ensures the semi-linear structure of the equations and we have
\begin{align}\label{Beddington system}
    \begin{split}
    \partial_t p_h^\varepsilon &= d \Delta p_h^\varepsilon+\frac{1}{\varepsilon}\big(a N^\varepsilon (P^\varepsilon-p_h^\varepsilon) -\tilde \gamma p_h^\varepsilon\big) -\mu p_h^\varepsilon,\\
    \partial_t P^\varepsilon &= d \Delta P^\varepsilon -\mu P^\varepsilon +\Gamma p^\varepsilon_h,\\
    \partial_t N^\varepsilon &= d_N \Delta N^\varepsilon +r_0(1-\eta N^\varepsilon)N^\varepsilon- a N^\varepsilon(P^\varepsilon-p_h^\varepsilon).
    \end{split}
\end{align}
The limiting system as $\varepsilon\to 0$ has the form
\begin{align}\label{Beddington limit}
    \begin{split}
        0 &=a N^0 (P^0-p_h^0) -\tilde \gamma p_h^0,\\
    \partial_t P^0&= d \Delta P^0-\mu P^0 +\Gamma p^0_h,\\
    \partial_t N^0 &= d_N \Delta N^0 +r_0(1-\eta N^0)N^0- a N^0(P^0-p_h^0).
    \end{split}
\end{align}
The case where $d_h\neq d_s$ is studied in an upcoming work.
We note that in \cite{conforto2018reaction,soresina2023fast} the convergence of solutions of \eqref{Beddington system} to \eqref{Beddington limit} in some $L^p$-spaces was shown using entropy methods. However, we seek the convergence in more regular spaces and also aim to construct the slow manifold in these more regular spaces. 

Moreover, as we require global Lipschitz continuity in the nonlinear terms we modify the above system. Let $\nu>0$ be a small parameter. 
Then, the modified system reads
\begin{align}\label{modified Beddington system}
    \begin{split}
    \partial_t p_h^\varepsilon &= (d \Delta-\mu\textnormal{Id}) p_h^\varepsilon+\frac{1}{\varepsilon}\bigg(a \frac{N^\varepsilon (P^\varepsilon-p_h^\varepsilon)}{1+\nu((N^\varepsilon)^2+(P^\varepsilon)^2)} -\tilde \gamma p_h^\varepsilon\bigg),\\
   \partial_t P^\varepsilon &= d \Delta P^\varepsilon -\mu P^\varepsilon +\Gamma p^\varepsilon_h,\\
    \partial_t N^\varepsilon &= d_N \Delta N^\varepsilon +r_0(1-\eta N^\varepsilon)N^\varepsilon- a \frac{N^\varepsilon (P^\varepsilon-p_h^\varepsilon)}{1+\nu((N^\varepsilon)^2+(P^\varepsilon)^2)}.
    \end{split}
\end{align}
Here, the new limiting system as $\varepsilon\to 0$ has the form
\begin{align}\label{modified Beddington limit}
    \begin{split}
        0 &=a \frac{N^0(P^0-p_h^0)}{1+\nu( (N^0)^2+(P^0)^2)}-\tilde \gamma p_h,\\
    \partial_t P^0&= d \Delta P^0-\mu P^0 +\Gamma p_h^0,\\
    \partial_t N^0 &= d_N \Delta N^0 +r_0(1-\eta N^0)N^0- a \frac{N^0(P^0-p_h^0)}{1+\nu( (N^0)^2+(P^0)^2)}.
    \end{split}
\end{align}
Now, let $\Omega=\mathbb{T}^n=[0,2\pi]^n$ with $n=1,2,3$ and let $X=Y=L^2(\Omega)$ be the underlying Banach space with $X_1=Y_1=H^2(\Omega)$.
Moreover, let the initial data $N_0,P_0,p_{h,0}\in H^2(\Omega)$ satisfy the critical manifold constraint
\begin{align} \label{initial data constraint}
  -\tilde \gamma p_{h,0} +a \frac{N_0 (P_0-p_{h,0})}{1+\nu( (N_0)^2+(P_{0})^2)}=0.  
\end{align}

\begin{theorem}
Let $(p_h^\varepsilon,P^\varepsilon,N^\varepsilon)$ be the solution of \eqref{Beddington system} and denote $(p_h^0,P^0,N^0)$ the solution of the limit system \eqref{Beddington limit}.
Then there exist constants $C,\, c>0$ such that for all $\varepsilon\in (0,\varepsilon_0]$ and all $N_0,P_0,p_{h,0} \in H^2(\Omega)$ satisfying \eqref{initial data constraint} with $\|N_0\|_{H^2(\Omega)},\|P_0\|_{H^2(\Omega)},\|p_{h,0}\|_{H^2(\Omega)}\leq \sigma$ it holds
\begin{align*}
    \sup_{0\leq t\leq T(\sigma)}\big(  \|p_h^\varepsilon(t)- p_h^0(t)\|_{H^{2\alpha}(\Omega)} +  \|P^\varepsilon(t)- P^0(t)\|_{H^2(\Omega)}+ \|N^\varepsilon(t)- N^0(t)\|_{H^2(\Omega)} \big) \leq C\big(\varepsilon^{1-\alpha} +\textnormal{e}^{-\varepsilon^{-1} c t}\big),
\end{align*}
where $\alpha\in [0,1)$ such that $2\alpha>\frac{n}{2}$ and the final time $T(\sigma)$ is given by
\begin{align*}
    T(\sigma):= \inf\big\{ t\in [0,\infty):\max\{ &\|p_h^\varepsilon(t)\|_{H^2(\Omega)}, \|P^\varepsilon(t)\|_{H^2(\Omega)}, \|N^\varepsilon(t)\|_{H^2(\Omega)},\\
     & \|p_h^0(t)\|_{H^2(\Omega)}, \|P^0(t)\|_{H^2(\Omega)}, \|N^0 (t)\|_{H^2(\Omega)}\}>\sigma\big\}.
\end{align*}
\end{theorem}

\begin{theorem}
Let $n=1$ and consider the domain $\Omega=\mathbb{T}=[0,2\pi]$ and let the assumptions of the previous result hold. 
Moreover, there is a $\zeta_0$ such that for all $\zeta\in (0,\zeta_0]$ satisfying $\varepsilon\zeta^{-1}<\frac{1}{2}$ the following holds:
\begin{itemize}
    \item There exists a slow manifold $S_{\varepsilon,\zeta}$ that is $C^1$-regular and locally invariant under the semi-flow generated by \eqref{Beddington system}.
    \item Let $S_{0,\zeta}:=\{ (p_h,P,N)\in S_0\, :\, (P,N)\in B_{Y_S^\zeta}(0,\sigma)\}$ be the submanifold of the critical manifold which consists of all points whose slow components are in the bounded set $ B_{Y_S^\zeta}(0,\sigma)$.
    Then there exists a constant $C>0$ such that
    \begin{align*}
        \dist (S_{\varepsilon,\zeta},S_{0,\zeta})\leq C\big(\varepsilon^{1-\alpha}+ \zeta^{1/2}\big).
    \end{align*}
    \item Suppose that $(h^0(P_0,N_0),P_0,N_0)\in S_{\varepsilon,\zeta}$ satisfying $\|P_0\|_{H^2([0,2\pi])},\|N_0\|_{H^2([0,2\pi])} ,\|h^0(P_0,N_0)\|_{H^2([0,2\pi])}\leq \sigma$.
    Then there exists a constant $C>0$ such that 
    \begin{align*}
       \sup_{0\leq t\leq T(\sigma)}  \big(  \|p_h^\varepsilon(t)- p_h^0(t)\|_{H^{2\alpha}(\Omega)} +  \|P^\varepsilon(t)- P^0(t)\|_{H^2(\Omega)}+ \|N^\varepsilon(t)- N^0(t)\|_{H^2(\Omega)} \big) \leq C\big(\zeta^{1-\alpha} +\zeta^{1/2}\big).
    \end{align*}
\end{itemize}
\end{theorem}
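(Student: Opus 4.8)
The plan is not to re-run the machinery but to \emph{verify the abstract hypotheses of Sections~\ref{sect 3.1} and~\ref{sect 4.1}} for the system \eqref{eq 4.5.1} in one space dimension, and then to read the three assertions off Theorem~\ref{general fenichel thm}, Proposition~\ref{Prop 4.6}, and the final Proposition of Section~\ref{section 4}. We put $X=Y=L^2(\mathbb{T}^1)$ and $A=B=\Delta$, so that $X_1=Y_1=H^2(\mathbb{T}^1)$ and the interpolation--extrapolation scales are $X_\alpha=Y_\alpha=H^{2\alpha}(\mathbb{T}^1)$. The periodic Laplacian generates an analytic $C_0$-semigroup; in the Fourier basis $\{\textnormal{e}^{\textnormal{i}kx}\}_{k\in\mathbb{Z}}$ it acts by $-k^2$, which gives $\|\textnormal{e}^{t\Delta}\|_{\mathcal{B}(H^2)}\le 1$ and $\|\textnormal{e}^{t\Delta}\|_{\mathcal{B}(H^{2\gamma},H^2)}\le C_\gamma\,t^{\gamma-1}$, so $\omega_A=\omega_B=0$. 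Since $H^2(\mathbb{T}^1)$ is a Banach algebra, the nonlinearities $f(u,v)=v^2-u(u-1)$ and $g\equiv0$ map $H^2\times H^2$ into $H^2$, so we take $\gamma=\delta=1$; $f$ is polynomial (hence $C^\infty$) and on a ball $\{\|u\|_{H^2},\|v\|_{H^2}\le 2\sigma\}$ satisfies the Lipschitz estimates of Section~\ref{sect 3.1} with constants depending on $\sigma$, the reduction to genuinely global constants being effected by a cut-off; this is why the conclusions are only stated up to the exit time $T(\sigma)$.

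\textbf{Critical manifold and modified system.} Solving $f=0$, i.e.\ $u^2-u=v^2$, we single out the branch $h^0(v)=\tfrac12\bigl(1+\sqrt{1+4v^2}\bigr)$, which for $\|v\|_{H^2}$ small takes values near the constant $1$ (using $H^2(\mathbb{T}^1)\hookrightarrow C(\mathbb{T}^1)$). On this branch $\textnormal{D}_xf(h^0(v),v)$ is multiplication by $1-2h^0(v)=-\sqrt{1+4v^2}\le-1$ pointwise, hence a Banach-space isomorphism of $H^2$ with inverse bounded uniformly in $v$, and we may take $\lambda_0=-1<0$; thus the dissipativity $\varepsilon\omega_A+\lambda_0=-1<0$ holds for the modified operator $\tilde A_\varepsilon$ of \eqref{modified fast reaction}. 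Because the branch stays in an $\mathcal{O}(\sigma)$-neighbourhood of $(1,0)$, the residual nonlinearity $\tilde f$, which is the quadratic-and-higher Taylor remainder of $f$ about the critical manifold, has Lipschitz constant $L_{\tilde f}=\mathcal{O}(\sigma)$ there; this smallness is precisely what makes the contraction constants in Propositions~\ref{Prop 4.2}--\ref{Prop 4.6} less than one once $\sigma$ and $\varepsilon\zeta^{-1}$ are small. With these choices Propositions~\ref{Prop 3.1}, \ref{Prop 3.2} and Theorem~\ref{Thm 3.6} apply and already yield the convergence estimate of the preceding ($n\le3$) theorem; here we only use them as inputs.

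\textbf{The splitting of $Y$ and the dimension restriction.} We split $L^2(\mathbb{T}^1)$ along the Fourier modes: fix $K_\zeta:=\lfloor\zeta^{-1/2}\rfloor$ and let $Y_S^\zeta$ be the (finite-dimensional) space of trigonometric polynomials of degree $<K_\zeta$ and $Y_F^\zeta$ its $L^2$-orthogonal complement. These are closed and $B$-reducing, the Fourier projections commute with $B$, the realisation $B_{Y_F^\zeta}$ has $0$ in its resolvent (the kernel mode $k=0$ lies in $Y_S^\zeta$), and $B_{Y_S^\zeta}$, acting on a finite-dimensional space, is bounded and generates a $C_0$-group agreeing with $\textnormal{e}^{tB}$ for $t\ge0$. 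On $Y_F^\zeta$ one has $\|\textnormal{e}^{tB}y_F\|_{Y_1}\le C_B t^{\delta-1}\textnormal{e}^{-K_\zeta^2 t}\|y_F\|_{Y_\delta}$ and on $Y_S^\zeta$ the backward bound $\|\textnormal{e}^{-tB}y_S\|_{Y_1}\le M_B\textnormal{e}^{(K_\zeta-1)^2 t}\|y_S\|_{Y_1}$, so the constants of Section~\ref{sect 4.1} may be taken as $N_F^\zeta=\zeta^{-1}-K_\zeta^2\ge0$ and $N_S^\zeta=\zeta^{-1}-(K_\zeta-1)^2$ (using $\varepsilon\omega_A+\lambda_0=-1$), whence
\[
N_S^\zeta-N_F^\zeta=K_\zeta^2-(K_\zeta-1)^2=2K_\zeta-1\;\asymp\;\zeta^{-1/2},
\]
i.e.\ $\alpha=\tfrac12$ in the asymptotics $N_S^\zeta-N_F^\zeta\approx C\zeta^{-\alpha}$; we also require $\varepsilon\zeta^{-1}<1$. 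It is precisely this construction that forces $n=1$: by the Weyl asymptotics $\lambda_m\sim Cm^{2/n}$ for $-\Delta$ on $\mathbb{T}^n$, consecutive eigenvalue gaps stay bounded as soon as $n\ge2$, so no Fourier truncation yields a diverging gap $N_S^\zeta-N_F^\zeta$.

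\textbf{Conclusion.} With all hypotheses verified, Theorem~\ref{general fenichel thm} produces a locally invariant, $C^1$-regular slow manifold $S_{\varepsilon,\zeta}$ for \eqref{eq 4.5.1}, which is the first bullet. For the second, Proposition~\ref{Prop 4.6} with $\gamma=1$ gives, for $v_0\in B_{Y_S^\zeta}(0,\sigma)$,
\[
\Bigl\|\bigl(h^{\varepsilon,\zeta}_{X_1}(v_0)-h^0(v_0),\,h^{\varepsilon,\zeta}_{Y_F}(v_0)\bigr)\Bigr\|_{X_1\times Y_1}\le C\Bigl(\varepsilon+\tfrac{1}{N_S^\zeta-N_F^\zeta}\Bigr)\|v_0\|_{Y_1}\le C\bigl(\varepsilon+\zeta^{1/2}\bigr),
\]
which is $\dist(S_{\varepsilon,\zeta},S_{0,\zeta})\le C(\varepsilon+\zeta^{1/2})$. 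For the third bullet we apply the final Proposition of Section~\ref{section 4} with initial data on $S_{\varepsilon,\zeta}$: since $g\equiv0$, $\delta=1$, $\omega_B=0$, the term $(\omega_B-\zeta^{-1}(\varepsilon\omega_A+\lambda_0)-N_F^\zeta)^{-\delta}=K_\zeta^{-2}\asymp\zeta$ and $(N_S^\zeta-N_F^\zeta)^{-\delta}\asymp\zeta^{1/2}$, the factors $\textnormal{e}^{(\omega_B+c)t}$ and $\textnormal{e}^{\textnormal{e}^{\omega_B t}}$ are bounded on $[0,T(\sigma)]$, and $\|u_0-h^0(v_0)\|_{X_1}=\|h^{\varepsilon,\zeta}_{X_1}(v_0)-h^0(v_0)\|_{X_1}\le C(\varepsilon+\zeta^{1/2})\sigma$ by Proposition~\ref{Prop 4.6}, so the remaining $\textnormal{e}^{\varepsilon^{-1}(\varepsilon\omega_A+\lambda_0)t}$-term is itself $\mathcal{O}(\varepsilon+\zeta^{1/2})$ uniformly in $t$; collecting terms gives $\sup_{0\le t\le T(\sigma)}\bigl(\|u^\varepsilon(t)-u^0_\zeta(t)\|_{H^2}+\|v^\varepsilon(t)-v^0_\zeta(t)\|_{H^2}\bigr)\le C(\varepsilon+\zeta^{1/2})$. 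The main obstacle is the third paragraph: constructing the Fourier splitting with the sharp $\zeta$-dependence $N_S^\zeta-N_F^\zeta\asymp\zeta^{-1/2}$ and recognising that this spectral-gap requirement can only be met for $n=1$; the rest is bookkeeping — matching Sobolev norms, checking the Lipschitz/contraction constants on $B_\sigma$, and keeping the trajectory inside the implicit-function-theorem neighbourhood, which is what the exit time $T(\sigma)$ records.
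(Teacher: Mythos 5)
Your proposal follows essentially the same route as the paper: verify the hypotheses of Sections~\ref{sect 3.1} and~\ref{sect 4.1} for the periodic Laplacian and the cut-off polynomial nonlinearity, build the Fourier-mode splitting with $N_S^\zeta-N_F^\zeta\asymp\zeta^{-1/2}$ (forcing $n=1$ via the Weyl/spectral-gap obstruction), and then read the three bullets off Theorem~\ref{general fenichel thm}, Proposition~\ref{Prop 4.6} and the final proposition of Section~\ref{section 4}; your observation that on the slow manifold the transient term carries the factor $\|u_0-h^0(v_0)\|_{X_1}\le C(\varepsilon+\zeta^{1/2})\|v_0\|_{Y_1}$ is exactly how the paper removes the $\textnormal{e}^{-\varepsilon^{-1}ct}$ contribution in the third bullet. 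The one substantive divergence is the choice of branch of the critical manifold: you take $h^0(v)=\tfrac12\bigl(1+\sqrt{1+4v^2}\bigr)$ with $\partial_u f=1-2u=-\sqrt{1+4v^2}\le-1$, whereas the paper takes $h^0(v)=\tfrac12-\sqrt{\tfrac14+v^2}$ (the branch through $u=0$) and states $\partial_u f=-2u-1$. For $f(u,v)=v^2-u(u-1)$ as written, your derivative is the correct one, and the branch through $u=0$ is actually repelling ($\partial_u f=+\sqrt{1+4v^2}$ there), so your version is the internally consistent one; the paper's computation would instead match the nonlinearity $v^2-u(u+1)$. This does not change the structure of the argument, only which solution branch of $f=0$ plays the role of the attracting critical manifold.
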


What remains is to check the assumptions from sections \ref{sect 3.1} and \ref{sect 4.1}.
\begin{itemize}
    \item[(i)] The Laplacian generates an analytic $C_0$-semigroup $(\textnormal{e}^{t\Delta})_{t\geq 0}$ on $X_1=H^2(\Omega)\subset L^2(\Omega)$ and from complex interpolation it follows that the spaces $X_\alpha= H^{2\alpha}(\Omega)$ are a suitable choice for the Banach scales.
    By restricting $\alpha\in (\frac{n}{4},1)$ it follows that $H^{2\alpha}(\Omega)$ and $H^2(\Omega)$ are Banach algebras.

    \item[(ii)] In this example we have $A=B_1= d \Delta -\mu \textnormal{Id}$, which implies that the growth bound satisfies $\omega_A=\omega_{B_1}\leq -\mu$. Similarly, we have $B_2=d_N \Delta$ and $\omega_{B_2}\leq 0$.
    
    \item[(iii)] The nonlinearities of the system are 
    \begin{align*}
        f(p_h,P,N)&= a \frac{N (P-p_h)}{1+\nu( N^2+P^2)}-\tilde\gamma p_h ,\\
        g_1(p_h,P,N)&=\Gamma p_h,\quad\textnormal{and}\\
        g_2(p_h,P,N)&= r_0(1-\eta N)N -a \frac{N (P-p_h)}{1+\nu(N^2+P^2)}.
    \end{align*}
    As the nonlinearities do not decrease the regularity we set $\gamma=\delta=1$.
    In addition, the function $f$ is Fr\'echet differentiable as a mapping $f: H^{2\alpha}\times H^{2\alpha}\times H^{2\alpha}\to H^{2\alpha}$ and similar the functions $f,g_1,g_2$ are Fr\'echet differentiable as mappings $f,g_1,g_2: H^2\times H^2\times H^2\to H^{2}$.
    
    As $0\leq p_h\leq P$ by the definition of $P$ it follows that there exists a constant $L_f>0$ such that
    \begin{align*}
        \|\textnormal{D} f(x,y,z)\|_{\mathcal{B}(H^2\times H^2\times H^2,H^{2})}&\leq L_f, \quad\textnormal{and}\quad  \|\textnormal{D} f(x,y,z)\|_{\mathcal{B}(H^{2\alpha}\times H^{2\alpha}\times H^2,H^{2\alpha})}\leq L_f.
    \end{align*}
    This implies that the following inequalities hold globally
        \begin{align*}
             \|f(x_1,y_1,z_1)-f(x_2,y_2,z_1)\|_{H^{2}} &\leq L_f\big( \|x_1-x_2\|_{H^{2}}+\|y_1-y_2\|_{H^{2}} +\|z_1-z_2\|_{H^{2}} \big),\\
             \|f(\tilde x_1,\tilde y_1,\tilde z_1)-f(\tilde x_2,\tilde y_2,\tilde z_1)\|_{H^{2\alpha}}&\leq L_f\big( \|\tilde x_1-\tilde x_2\|_{H^{2\alpha}}+\|\tilde y_1-\tilde y_2\|_{H^{2\alpha}} +\|\tilde z_1-\tilde z_2\|_{H^{2\alpha}}\big),
        \end{align*}
        where $x_1,x_2,y_1,y_2,z_1,z_2\in H^2(\Omega)$ and $\tilde x_1,\tilde x_2,\tilde y_1,\tilde y_2,\tilde z_1,\tilde z_2\in H^{2\alpha}$ and where the Lipschitz constant is bounded by $L_f\leq \max\{a*\nu^{-1},\tilde \gamma\}$.
        
    Although in \cite{conforto2018reaction}and \cite{soresina2023fast} it was shown that $N$ has a global $L^\infty$-bound $N_\infty$, this is not enough to guarantee the global Lipschitz continuity for $g_2$ in the modified systems \eqref{modified Beddington system}.
    To ensure this we apply a cutoff technique. 
    Let $\sigma>0$ and choose a $C^\infty$-cutoff function $\chi : H^2(\Omega)\to [0,1]$ such that $\chi(x)=1$ if $x\in B(0,\sigma)$, $\chi(x)=0 $ if $x\in H^2(\Omega)\setminus B(0,2\sigma)$ and $\|\textnormal{D}\chi\|_{\mathcal{B}(H^2(\Omega),\mathbb{R})}\leq \sigma$.
    Then we have that
    \begin{align*}
        g_{2,\chi}: H^2\times H^2\times H^2&\to H^{2},\\
        (x,y,z)&\mapsto r_0\big(1-\eta \chi(\|z\|_{H^2})z\big)\chi(\|z\|_{H^2})z -a \frac{z(y-x)}{1+\nu(z^2+y^2)}
    \end{align*}
    satisfies
    \begin{align*}
        \|\textnormal{D} g_{2,\chi}(x,y,z)\|_{\mathcal{B}(H^2\times H^2\times H^2,H^{2})}&\leq L_{g_{2}}.
    \end{align*}
    which implies 
    \begin{align*}
         \|g_{2,\chi}(x_1,y_1,z_1)-g_{2,\chi}(x_2,y_2,z_1)\|_{H^{2}} &\leq L_{g_2}\big( \|x_1-x_2\|_{H^{2}}+\|y_1-y_2\|_{H^{2}} +\|z_1-z_2\|_{H^{2}} \big)
    \end{align*}
    for $x_1,x_2,y_1,y_2,z_1,z_2\in H^2(\Omega)$.
    \item[(iv)] We compute that $\textnormal{D}_{p_h}f= -a N -\tilde \gamma\leq -\tilde \gamma <0$ as solutions to \eqref{modified Beddington limit} satisfy $N\geq0$. Hence, the nonlinearity $f$ satisfies the assumptions of the implicit function theorem and we obtain the existence of a function $h^0:H^2\times H^2\to H^2$ such that $h^0(P^0,N^0)=p_h^0$ for all $(P^0,N^0)\in V\subset H^2\times H^2$.
  \item[(v)] The function $h^0$ given by the implicit function theorem can be computed explicitly, i.e. 
  \begin{align*}
      h^0(P^0,N^0)= \frac{a N^0 P^0}{\tilde \gamma+ aN^0+\tilde \gamma \nu ((N^0)^2+(P^0)^2)} .
  \end{align*}
  From this explicit form we immediately see that $h^0:H^2\times H^2\to H^2$ is Lipschitz continuous with constant $L_{h^0}$.
\end{itemize}
Thus the limit system \eqref{modified Beddington limit} can be written as
\begin{align}\label{modified beddington limit slow}
    \begin{split}
        \partial_t P^0&= d \Delta P^0-\mu P^0 +\Gamma\frac{a N^0 P^0}{\tilde \gamma+ aN^0+\tilde \gamma \nu ((N^0)^2+(P^0)^2)},\\
    \partial_t N^0 &= d_N \Delta N^0 +r_0(1-\eta N^0)N^0- \frac{aN\bigg(P-\frac{a N^0 P^0}{\tilde \gamma+ aN^0+\tilde \gamma \nu ((N^0)^2+(P^0)^2)}\bigg)}{1+\nu((N^0)^2+(P^0)^2)},
    \end{split}
\end{align}
defined on the slow manifold
\begin{align}\label{beddingto slow manifold}
    S_0=\bigg\{ (p_h,P,N)\in (H^2)^3: p_h=\frac{a N^0 P^0}{\tilde \gamma+ aN^0+\tilde \gamma \nu ((N^0)^2+(P^0)^2)} \}.
\end{align}
\begin{itemize}
    \item[(vi)] Using that $\textnormal{D}_{p_h}f(p_h,P,N)\leq -\tilde \gamma$, we can introduce the operator $\tilde A_\varepsilon$ as 
    \begin{align*}
        \tilde A_\varepsilon=\varepsilon d \Delta- (\varepsilon\mu+\tilde \gamma) \textnormal{Id}
    \end{align*}
    and the corresponding nonlinear function 
    \begin{align*}
        \tilde f= a \frac{N(P-p_h)}{1+\nu(N^2+P^2)}.
    \end{align*}
    It follows that $\tilde A_\varepsilon$ generates an analytic semigroup with growth bound $\varepsilon(\omega_A-\mu)-\tilde \gamma<0$.
    Moreover, we have that the Lipschitz constant of $\tilde f$ is bounded by $L_{\tilde f}\leq a\nu^{-1}$.
\end{itemize}
Thus, all assumptions in section \ref{sect 3.1} are satisfied and it remains to show that the Laplacian satisfies the assumptions for the splitting.

Making use of the Hilbert structure of $L^2([0,2\pi])$ we introduce the splitting as 
\begin{align*}
    L^2=Y= Y_S^\zeta\oplus Y_F^\zeta
\end{align*} 
by truncating at a certain Fourier mode.
Assuming that 
$$-(k_0+1)^2< \zeta^{-1}(\varepsilon (\omega_A-\mu) -\tilde \gamma)\leq - k_0^2$$
we set 
\begin{align*}
  Y_S^\zeta= \text{span}\{x\mapsto \textnormal{e}^{ikx}: |k|\leq k_0\},\qquad Y_F^\zeta= \text{cl}_{L^2}(\text{span} \{x\mapsto \textnormal{e}^{ikx}: |k|> k_0\}). 
\end{align*}

\begin{itemize}
    \item[(vii)] Since $Y_S^\zeta$ is a finite-dimensional space the shifted Laplacian $\Delta- \mu \textnormal{Id}$ generates a $C_0$-group on $Y_S^\zeta$, where $(\textnormal{e}^{t(\Delta- \mu \textnormal{Id})_{Y_S^\zeta}})_{t\in \mathbb{R}}$ coincides with $(\textnormal{e}^{t(\Delta- \mu \textnormal{Id})})|_{Y_S^\zeta}$ for $t\geq 0$.
    \item[(viii)] For the realization of the Laplacian in $Y_F^\zeta$ we have that $0$ is in its resolvent.
    This can be seen by the representation of the inverse, which is given by
    \begin{align*}
        \Delta_{Y_F^\zeta}^{-1}: L^2([0,2\pi]) \to  H^2([0,2\pi]),\quad \sum_{|k|\geq k_0} \hat f(k) \textnormal{e}^{ikx} \mapsto \sum_{|k|\geq k_0} \frac{\hat f(k) \textnormal{e}^{ikx}}{|k|^2}
    \end{align*}
    and is well-defined as $k=0$ does not appear in the sum.
    \item[(ix)] We note that the semigroup $(\textnormal{e}^{t\Delta})_{t\geq 0}$ can be expressed as
    \begin{align*}
         \textnormal{e}^{t(\Delta-\mu \textnormal{Id})} f= \bigg[ x\mapsto \sum_{k\in \mathbb{Z}} \textnormal{e}^{-(|k|^2+\mu) t} \hat f (k) \textnormal{e}^{ikx}\bigg].
    \end{align*}
    Applying Plancherel's theorem for $y_S\in Y_S^\zeta$ and $t\geq 0$ yields
    \begin{align*}
        \|\textnormal{e}^{-t(\Delta-\mu \textnormal{Id})} y_S\|_{H^2([0,2\pi])}\leq \textnormal{e}^{(k_0-1)^2t}\|y_S\|_{H^2([0,2\pi])}.
    \end{align*}
    Thus, we can take $N_S^\zeta= \zeta^{-1}\tilde \gamma -(k_0-1)^2$ and similarly we may take $N_F^\zeta= \zeta^{-1} \tilde \gamma- k_0^2$.
    Then,
    \begin{align*}
        N_S^\zeta-N_F^\zeta= 2 k_0-1 \geq 2 \sqrt{\zeta^{-1}\tilde \gamma}-C.
    \end{align*}
    Therefore, $N_S^\zeta-N_F^\zeta\approx \mathcal{O}(\zeta^{-1/2})$ for $\zeta$ small enough.
    \item[(x)] The last assumption we need to check is the spectral gap condition \eqref{spectral gap condition}. To this end let $\varepsilon\zeta ^{-1}\leq \frac{1}{2}$ and assume that the parameter $\nu$ is chosen large enough such that $C_A a \tilde\gamma <\frac{1}{2} \nu$.
    Then,
    $C(L_{g_1}+L_{g_2})<\frac{1}{2}( N_S^\zeta-N_F^\zeta)$ holds for all sufficiently small $\zeta>0$ and the spectral gap condition is satisfied.
\end{itemize}
This completes checking the assumptions for section \ref{sect 4.1}.

\begin{remark}
    The dissipative structure of the system $\eqref{Beddington system}$ is key that the we do not loose to much information on the solutions when we apply the cut-off method to obtain global Lipschitz bounds.  
\end{remark}
\begin{remark}
    We want to emphasize that the slow manifold for the original system exist only in the region of the phase space that satisfies the uniform bound already prior to the modification and cut-off procedure.
    This means that in the application at hand, the exponential attraction towards the slow manifold (cf. Proposition \ref{Prop 4.9}) is only valid in the aforementioned region of phase space.
    Hence, to see the slow manifold in numerical simulations it is crucial for the initial data to be already close to the critical manifold.
\end{remark}

\subsection{Remarks}
We conclude this example with some observations.

\begin{remark}
 As pointed out in \cite{debussche1991inertial,temam2012infinite} slow manifolds can be seen as a spatial case of inertial manifolds. And systems such as (1.1) with a large reaction term can have inertial manifolds as shown in \cite{zelik2014inertial} and the references therein.
 However, the existence of the invariant manifold (inertial or slow) plays only one part in this work and the slow manifold literature.
 The key question, more importantly, is the convergence of the slow manifold $S_{\varepsilon,\zeta}$ to the critical manifold $S_0$ in both a geometric and analytical way.
\end{remark}

\begin{remark}
    Here, we want to explain the above mentioned convergence in more detail.
    The critical manifold $S_0\subset X\times Y$ given by $S_0=\{(x,y)\in X\times Y: f(u,v)=0\}$ is a (possible) infinite dimensional manifold in the phase space on which the slow dynamics evolve.
    With the splitting of $Y$ into a fast component $Y_F^\zeta$ and a slow component $Y_S^\zeta$ we obtain that the slow component has a finite dimension, where $\dim Y_S^\zeta$ is related to $\zeta^{-1}$.
    As the slow manifold is constructed as a function $(h_{X_1}^{\varepsilon,\zeta}(v_S),h^{\varepsilon,\zeta}_{X_F^\zeta}(v_S))$ over the slow variable space it follows that $\dim S_{\varepsilon,\zeta}\leq  2\dim Y_S^\zeta $.
    Thus, with decreasing $\zeta$ the dimension of the slow variable space and therefore also of the slow manifold increases and in the limit $\zeta \to 0$ coincides with the critical manifold.
\end{remark}

\begin{remark}
    In applications one might to need to construct the explicit slow manifold. However, as the current existence proof of the slow manifold is non-constructive the question is whether this is possible. One method to obtain an approximate slow manifolds is via a Galerkin approximation and the ideas presented in \cite{engel2021connecting} for fast-slow systems also hold in the case of fast-reaction equations.
\end{remark}

\begin{remark}
    As in the theory for inertial manifolds, the question whether there are slow manifolds in the case when the spectral gap condition is violated remains open. We conjecture that the spectral gap condition is only a sufficient condition for the existence of the slow manifold and that there are cases similar to the one in \cite{zelik2014inertial} that give us examples of existence but also counterexamples.
\end{remark}

\begin{remark}
    We note some differences between the results obtained for fast-reaction systems in this work and the results in \cite{hummel2022slow} for fast-slow systems.
    One key issue are the restrictions on the operator $A$. 
    In this work the growth bound on the semigroup generated by the operator $A$ need not satisfy $\omega_A<0$ since the fast-reaction dominates the terms on the right-hand side.
    Therefore, we only require that $\| \textnormal{e}^{\varepsilon^{-1}(\varepsilon A -\textnormal{D}_x f(h^0(v^0),v^0))t} \|_{\mathcal{B}(X_1)}\leq M_A \textnormal{e}^{\varepsilon^{-1}(\varepsilon \omega_A- \lambda_0)t} $ where $\varepsilon \omega_A- \lambda_0< 0$.
    On the other hand, we need more requirements on the nonlinear function $f$, one of which is, that $f$ has to satisfy the implicit function theorem in a neighborhood around the equilibrium $f(u,v)=0$. 
    Another difference is that, with the current method, we loose regularity in the estimates for the slow manifold, i.e. the distance result (Prop. \ref{Prop 4.6}) and the convergence results (Prop. \ref{Prop 4.9}) only hold in the space $X_\alpha\times Y_1$ and not in $X_1\times Y_1$.
\end{remark}

\begin{appendix}
\section{Tools from Functional Analysis}\label{appendix A}
In this section we present useful definitions and theorems from the theory of semigroups that are used throughout the paper.
The details to the brief summary we present here can be found in \cite{amann1995linear} and \cite{ lunardi2012analytic} and the references therein.

\begin{definition}
Let $X$ be a Banach space. 
Then a family of linear, bounded operators $\big( S(t)\big)_{t\geq 0}\subset \mathcal{L}(X)$, where $S(t): X\to X$ for all $t\geq 0$, is called a semigroup if
\begin{align*}
    S(0)= I,\qquad S(s+t)=S(s)S(t) \quad \forall t\geq 0.
\end{align*}
It is called a strongly continuous or $C_0$-semigroup if in addition
\begin{align*}
    \|S(t)x-x\|_X\to 0,\quad t\to 0^+ \quad \forall x\in X.
\end{align*}
\end{definition}

\begin{definition}
Define the set $D(A):= \{ x\in X :\, \exists \, \lim_{t\to 0^+} \frac{S(t)x-x}{t}\}$ and $Ax:= \lim_{t\to 0^+} \frac{S(t)x-x}{t}$ for $x\in D(A)$.
Then, $A$ is called the infinitesimal generator of $\big( S(t)\big)_{t\geq 0}$.
\end{definition}

\begin{theorem}
Let $x\in D(A)$ and assume that $\sup_{t\geq 0} \|S(t)\|<\infty$.
Then
\begin{itemize}
    \item[i)] $S(t)x \in D(A)$ for all $t\geq 0$.
    \item[ii)] $AS(t)x = S(t)A x$ for all $t\geq 0$.
    \item[iii)] The mapping $t\mapsto S(t)x$ is differentiable for each $t\geq 0$ and $\frac{\textnormal{d}}{\textnormal{d}t} S(t)x= AS(t)x$ for all $t\geq 0$.
\end{itemize}
\end{theorem}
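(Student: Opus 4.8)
The plan is to derive all three statements directly from the definition of the infinitesimal generator, the semigroup law $S(s+t)=S(s)S(t)$, and the uniform bound $M:=\sup_{t\geq 0}\|S(t)\|<\infty$. First I would record an auxiliary fact that is used repeatedly: every $C_0$-semigroup is strongly continuous on all of $[0,\infty)$, not merely at $0$. Indeed, for $y\in X$, $t\geq 0$ and $h>0$ one has $\|S(t+h)y-S(t)y\|=\|S(t)(S(h)y-y)\|\leq M\|S(h)y-y\|\to 0$, and for $0<h\leq t$, $\|S(t)y-S(t-h)y\|=\|S(t-h)(S(h)y-y)\|\leq M\|S(h)y-y\|\to 0$.

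For (i) and (ii), fix $x\in D(A)$ and $t\geq 0$. Using the semigroup law, for $h>0$,
\[
\frac{S(h)\big(S(t)x\big)-S(t)x}{h}=S(t)\,\frac{S(h)x-x}{h}.
\]
Since $S(t)\in\mathcal{L}(X)$ is bounded and $\frac{S(h)x-x}{h}\to Ax$ as $h\to 0^+$, the right-hand side converges to $S(t)Ax$. Hence the limit defining the generator exists at the point $S(t)x$, which yields both $S(t)x\in D(A)$ and $AS(t)x=S(t)Ax$.

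For (iii) I would treat the right and left difference quotients separately. The right derivative at $t\geq 0$ is handled by the same computation: $\frac{S(t+h)x-S(t)x}{h}=S(t)\frac{S(h)x-x}{h}\to S(t)Ax=AS(t)x$ as $h\to 0^+$. For the left derivative at $t>0$, take $0<h<t$ and write
\[
\frac{S(t)x-S(t-h)x}{h}-AS(t)x=S(t-h)\Big(\frac{S(h)x-x}{h}-Ax\Big)+\big(S(t-h)-S(t)\big)Ax.
\]
The first term has norm at most $M\big\|\frac{S(h)x-x}{h}-Ax\big\|\to 0$, and the second tends to $0$ by the strong continuity of $S(\cdot)$ noted above (applied to the vector $Ax$). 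Thus the left and right derivatives coincide and equal $AS(t)x$, so $t\mapsto S(t)x$ is differentiable on $[0,\infty)$ with $\frac{\textnormal{d}}{\textnormal{d}t}S(t)x=AS(t)x$.

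There is no substantial obstacle here; the only place the hypothesis $\sup_{t\geq 0}\|S(t)\|<\infty$ is genuinely needed is in bounding the first term of the left-derivative identity, where one needs a uniform bound on $\|S(t-h)\|$ as $h\to 0^+$. I would remark that boundedness of $\|S(\cdot)\|$ on the compact interval $[0,t]$ — which holds for every $C_0$-semigroup — already suffices, so the stated global bound is more than enough.
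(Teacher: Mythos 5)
Your proof is correct and is exactly the classical argument (difference quotients plus the semigroup law) found in the standard references the paper cites for this appendix; the paper itself gives no proof of this theorem, deferring to \cite{amann1995linear} and \cite{lunardi2012analytic}. Your closing remark is also accurate: local boundedness of $\|S(\cdot)\|$ on compact intervals, which every $C_0$-semigroup enjoys by the uniform boundedness principle, already suffices for the left-derivative estimate, so the global bound hypothesis is stronger than needed.
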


\begin{theorem}
Let $\big( S(t)\big)_{t\geq 0}$ be a $C_0$-semigroup on $X$. 
Then there exists an $\omega\in \mathbb{R}$ and $M\geq 1$ such that
\begin{align*}
    \|S(t)\|_{\mathcal{B}(X)}\leq M \textnormal{e}^{\omega t}\quad \forall t\geq 0.
\end{align*}
\end{theorem}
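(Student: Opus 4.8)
The plan is to derive the global exponential bound from purely local information about the semigroup near $t=0$. Concretely, I would first use the Banach--Steinhaus theorem together with strong continuity to bound $\|S(t)\|_{\mathcal{B}(X)}$ uniformly on a small interval $[0,\delta]$, and then propagate this bound to all $t\geq 0$ by means of the algebraic semigroup law $S(s+t)=S(s)S(t)$, finally reorganizing the resulting estimate into the form $M\,\textnormal{e}^{\omega t}$.

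\textbf{Step 1: a uniform bound near the origin.} I claim there is $\delta>0$ with $M:=\sup_{t\in[0,\delta]}\|S(t)\|_{\mathcal{B}(X)}<\infty$. Suppose this fails; then for every $k\in\mathbb{N}$ the function $\|S(\cdot)\|_{\mathcal{B}(X)}$ is unbounded on $[0,1/k]$, so one can choose $s_k\in[0,1/k]$ with $\|S(s_k)\|_{\mathcal{B}(X)}\geq k$, yielding a sequence $s_k\downarrow 0$ with $\|S(s_k)\|_{\mathcal{B}(X)}\to\infty$. If $\sup_k\|S(s_k)x\|_X<\infty$ held for every $x\in X$, then the Banach--Steinhaus theorem would force $\sup_k\|S(s_k)\|_{\mathcal{B}(X)}<\infty$, a contradiction; hence there exists $x\in X$ with $\sup_k\|S(s_k)x\|_X=\infty$. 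But strong continuity gives $S(s_k)x\to S(0)x=x$ in $X$, so $(S(s_k)x)_k$ is bounded, again a contradiction. This proves the claim; note moreover that $M\geq\|S(0)\|_{\mathcal{B}(X)}=\|I\|=1$.

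\textbf{Step 2: globalization and the exponential form.} Fix $t\geq 0$ and write $t=n\delta+r$ with $n=\lfloor t/\delta\rfloor\in\mathbb{N}_0$ and $r\in[0,\delta)$. Iterating the semigroup identity gives $S(t)=S(r)\,S(\delta)^{\,n}$, so by submultiplicativity of the operator norm
\[
\|S(t)\|_{\mathcal{B}(X)}\leq\|S(r)\|_{\mathcal{B}(X)}\,\|S(\delta)\|_{\mathcal{B}(X)}^{\,n}\leq M\cdot M^{\,n}=M^{\,n+1}.
\]
Set $\omega:=\delta^{-1}\ln M\in\mathbb{R}$, which satisfies $\omega\geq 0$ because $M\geq 1$. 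Since $n\leq t/\delta$ and $\ln M\geq 0$, we obtain $M^{\,n+1}=\exp\bigl((n+1)\ln M\bigr)\leq\exp\bigl((t/\delta+1)\ln M\bigr)=M\,\textnormal{e}^{\omega t}$. Therefore $\|S(t)\|_{\mathcal{B}(X)}\leq M\,\textnormal{e}^{\omega t}$ for all $t\geq 0$, with $M\geq 1$ and $\omega\in\mathbb{R}$, which is the assertion.

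The only step carrying genuine functional-analytic content is Step 1: the passage from the pointwise bounds $\sup_k\|S(s_k)x\|_X<\infty$ (themselves a consequence of strong continuity at $0$) to a uniform operator-norm bound requires the uniform boundedness principle and cannot be avoided; Step 2 is an elementary manipulation of the semigroup law. As an alternative to the contradiction argument in Step 1 one could instead note that each orbit $t\mapsto S(t)x$ is continuous, hence bounded, on the compact interval $[0,1]$ and apply Banach--Steinhaus there; this is slightly less economical, because checking continuity of the orbits away from $t=0$ already relies on the local bound established in Step 1.
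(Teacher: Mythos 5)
Your proof is correct. The paper does not prove this statement itself --- it is listed in Appendix \ref{appendix A} as a standard fact with the proofs deferred to the cited references --- and your argument is precisely the classical one found there: a uniform bound on $[0,\delta]$ via strong continuity at $0$ and the Banach--Steinhaus theorem, followed by globalization through the semigroup law and the choice $\omega=\delta^{-1}\ln M$. Both steps are carried out without gaps.
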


\begin{theorem}
Let $A$ be the generator of a $C_0$-semigroup $\big( S(t)\big)_{t\geq 0}$ on $X$.
If $B\in \mathcal{B(X)}$, then $A+B$ is the infinitesimal generator a $C_0$-semigroup $\big( T(t)\big)_{t\geq 0}$ on $X$ satisfying
\begin{align*}
   \|T(t)\|_{\mathcal{B}(X)}\leq M \textnormal{e}^{(\omega +M\|B\|_{\mathcal{B}(X)})t}\quad \forall t\geq 0.
\end{align*}
\end{theorem}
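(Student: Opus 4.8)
The statement is the classical bounded perturbation theorem for $C_0$-semigroups (see e.g.\ \cite{lunardi2012analytic}), and the plan is to construct $(T(t))_{t\geq 0}$ explicitly as a Dyson--Phillips series. First I would rewrite the expected identity $\frac{\textnormal{d}}{\textnormal{d}t}T(t)x=(A+B)T(t)x$ in the integrated (mild) form
\begin{align*}
    T(t)x = S(t)x + \int_0^t S(t-s)\,B\,T(s)x\,\textnormal{d}s,
\end{align*}
and solve this Volterra equation by iteration: set $T_0(t):=S(t)$ and $T_{n+1}(t)x:=\int_0^t S(t-s)\,B\,T_n(s)x\,\textnormal{d}s$, then define $T(t):=\sum_{n\geq 0}T_n(t)$.

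The next step is the norm bound, which is a routine induction on $n$. Using $\|S(t)\|_{\mathcal{B}(X)}\leq M\textnormal{e}^{\omega t}$ one checks that
\begin{align*}
    \|T_n(t)\|_{\mathcal{B}(X)}\leq \frac{M^{n+1}\|B\|_{\mathcal{B}(X)}^n\,t^n}{n!}\,\textnormal{e}^{\omega t},
\end{align*}
so the series converges absolutely in $\mathcal{B}(X)$, uniformly for $t$ in compact intervals, and
\begin{align*}
    \|T(t)\|_{\mathcal{B}(X)}\leq M\textnormal{e}^{\omega t}\sum_{n\geq 0}\frac{\big(M\|B\|_{\mathcal{B}(X)}\,t\big)^n}{n!}=M\,\textnormal{e}^{(\omega+M\|B\|_{\mathcal{B}(X)})t},
\end{align*}
which is precisely the claimed estimate.

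Then I would verify that $(T(t))_{t\geq 0}$ is a $C_0$-semigroup. Strong continuity and $T(0)=I$ are immediate from the uniform convergence of the series and the strong continuity of $(S(t))_{t\geq 0}$. For the semigroup law $T(s+t)=T(s)T(t)$, the cleanest route is to observe that, for fixed $s$, both $t\mapsto T(s+t)x$ and $t\mapsto T(s)T(t)x$ satisfy the same Volterra integral equation in $t$ with the same value at $t=0$; uniqueness of solutions (a Gronwall argument applied to the difference) then forces equality.

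The final and most delicate point is identifying the generator, call it $C$. For $x\in D(A)$ one differentiates the integral equation at $t=0^{+}$: since $s\mapsto S(t-s)\,B\,T(s)x$ is continuous in $s$, the averaged integral converges to $Bx$, while $t^{-1}(S(t)x-x)\to Ax$; hence $x\in D(C)$ and $Cx=Ax+Bx$. The converse inclusion $D(C)\subset D(A)$ --- that the perturbation does not enlarge the domain --- follows from the \emph{reciprocity} of the construction: since $B$ is bounded, running the same scheme starting from $(T(t))_{t\geq 0}$ with $B$ replaced by $-B$ reproduces $(S(t))_{t\geq 0}$, so $A$ and $C$ have the same domain. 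I expect this last identification, rather than the norm estimate, to be the main obstacle, essentially because it requires handling the domains carefully rather than any genuinely new idea.
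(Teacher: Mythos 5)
Your argument is correct: it is the classical bounded-perturbation theorem proved via the Dyson--Phillips expansion, and every step checks out --- the inductive bound $\|T_n(t)\|_{\mathcal{B}(X)}\leq M^{n+1}\|B\|_{\mathcal{B}(X)}^n t^n\textnormal{e}^{\omega t}/n!$ sums exactly to the stated estimate, the semigroup law via uniqueness of the Volterra equation is sound, and the generator identification (including the reciprocity argument with $-B$ for the inclusion $D(C)\subset D(A)$) is the standard one. Note, however, that the paper itself offers no proof of this statement: it appears in Appendix \ref{appendix A} as background material, deferred to the cited references \cite{amann1995linear,lunardi2012analytic}, so there is no in-paper argument to compare against; your proof is essentially the one found in those references.
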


\begin{theorem}
Let $A$ be the generator of a $C_0$-semigroup $\big( S(t)\big)_{t\geq 0}$ on $X$ and let $x\in D(A)$.
Then the mapping $t\mapsto S(t)x $ solves the abstract Cauchy problem on X for $t\geq 0$
\begin{align*}
    \frac{\textnormal{d}}{\textnormal{d}t} y=Ay, \quad y(0)=x.
\end{align*}
\end{theorem}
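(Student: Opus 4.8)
The plan is to set $y(t) := S(t)x$ and to verify directly that it satisfies both the initial condition and the differential equation. The initial condition is immediate from the semigroup axiom $S(0)=I$, which gives $y(0)=S(0)x=x$. The substance of the statement is therefore that $t\mapsto S(t)x$ is differentiable on $[0,\infty)$ with derivative $AS(t)x$, and for this I would establish the right and left derivatives separately and then show that they coincide.

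For the right derivative at $t\geq 0$ I would use the semigroup property to write, for $h>0$,
\[
\frac{S(t+h)x - S(t)x}{h} = S(t)\,\frac{S(h)x - x}{h}.
\]
Since $x\in D(A)$, the difference quotient $\tfrac{S(h)x-x}{h}$ converges to $Ax$ as $h\to 0^+$ by the definition of the generator, and since $S(t)$ is a fixed bounded operator it may be pulled through the limit, yielding the right derivative $S(t)Ax$. An earlier result in this appendix gives $S(t)x\in D(A)$ together with $AS(t)x=S(t)Ax$, so the right derivative equals $AS(t)x$.

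For the left derivative at $t>0$ I would take $0<h<t$ and write
\[
\frac{S(t)x - S(t-h)x}{h} = S(t-h)\,\frac{S(h)x - x}{h}.
\]
Here the prefactor $S(t-h)$ itself depends on $h$, so I would split
\[
S(t-h)\,\frac{S(h)x - x}{h} - S(t)Ax = S(t-h)\!\left[\frac{S(h)x - x}{h} - Ax\right] + \big[S(t-h)Ax - S(t)Ax\big].
\]
The second bracket tends to $0$ as $h\to 0^+$ by strong continuity of the semigroup applied to the fixed vector $Ax$. For the first term, the exponential bound $\|S(s)\|_{\mathcal{B}(X)}\leq M\textnormal{e}^{\omega s}$ gives the uniform estimate $\|S(t-h)\|_{\mathcal{B}(X)}\leq M\textnormal{e}^{|\omega| t}$ for all $h\in[0,t]$, while the bracketed difference quotient tends to $0$ because $x\in D(A)$; hence the product vanishes in the limit. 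Therefore the left derivative also equals $S(t)Ax = AS(t)x$.

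Since the one-sided derivatives agree and equal $AS(t)x$, the map $y(t)=S(t)x$ is differentiable with $y'(t)=Ay(t)$, which completes the verification. The only delicate point is the left derivative: the varying prefactor $S(t-h)$ must be controlled, and this is exactly where the uniform exponential bound (to keep $\|S(t-h)\|_{\mathcal{B}(X)}$ bounded as $h\to 0^+$) and strong continuity (to handle the fixed-vector term) are both needed. The right derivative, by contrast, is essentially immediate from the definition of the generator.
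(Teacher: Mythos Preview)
Your argument is correct. The paper, however, does not supply a proof of this statement: it appears in Appendix~A as one of several background results quoted from \cite{amann1995linear} and \cite{lunardi2012analytic}. In fact, the differentiability assertion you establish is already recorded as part~(iii) of the preceding theorem in the same appendix (also stated without proof), so within the paper's logic the present statement follows in one line by combining that item with $S(0)=I$. Your proposal instead gives a self-contained verification of the differentiability, which is the standard textbook argument and exactly what underlies the cited references; there is nothing to correct.
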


\begin{definition}
Let $A$ be a densely defined closed linear operator on $X$ with $0\in \varrho(A)$.
Let $\theta \in (0,1)$.
Then we call $(\cdot,\cdot)_\theta$ an exact admissible interpolation functor, i.e. it is an exact interpolation functor such that $X_1$ is dense in $(X_0,X_1)_\theta$ whenever $X_1 \overset{d}{\hookrightarrow}X_0$.
\end{definition}

\begin{definition}
We define a family of Banach spaces $(X_\alpha)_{\alpha\in [-1,\infty)}$ and a family of operators $(A^\alpha)_{\alpha\in [-1,\infty)}\in \mathcal{B}(X_\alpha,X_{\alpha+1})$ as follows
\begin{itemize}
    \item for $k\in \mathbb{N}_0$ we set $X_k:= D(A^k)$ endowed with the norm $\|x\|_{X_k}:= \|A^k x\|_{X}$.
    In particular, $X_0=D(A^0)=X$;
    \item $X_{-1}$ is defined as the completion of $X$ with respect to the norm $\|x\|_{X_{-1}}:= \|A^{-1}x\|_{X}$;
    \item For $k\in \mathbb{N}_0$, $\theta\in (0,1)$ and $\alpha=k+\theta$ we define
    $X_\alpha:= (X_k,X_{k+1})_\theta$ and $A^\alpha= A^k\big|_{D(A^\alpha)}$, where
    $D(A^\alpha)=\{ x\in X_{k+1}:\, A^k x \in X_\alpha\}$.
\end{itemize}
The family $(X_\alpha,A^\alpha)_{\alpha\in [-1,\infty)}$ is a densely injected Banach scale in the sense that $X_\alpha\overset{d}{\hookrightarrow}X_\beta$ whenever $\alpha\geq \beta$ and $A^\alpha: X_{\alpha+1} \to X_\alpha$ is an isomorphism for all $\alpha\in \mathbb{R}$.
Moreover, $A^\alpha: X_{\alpha+1}\to X_\alpha$ is a densely defined closed operator with $0\in \varrho(A^\alpha)$ for all $\alpha\in \mathbb{R}$.
The family $(X_\alpha,A^\alpha)$ is called an interpolation-extrapolation scale.
\end{definition}

\begin{theorem}
Let $A$ be the generator of a $C_0$-semigroup $\big( S(t)\big)_{t\geq 0}$ on $X$ and let $\omega_S\in \mathbb{R}$ be the growth bound of the semigroup $S(t)$.
Then $A^\alpha:X_{\alpha+1}\to X_\alpha$ also generates a $C_0$-semigroup $\big( S_\alpha(t)\big)_{t\geq 0}$ with the same growth bound. 
For all $\alpha,\, \beta \in [0,\infty)$ with $\alpha\geq \beta$ the following diagram commutes
\[\begin{tikzcd}
	{X_\alpha} & {X_\alpha} \\
	{X_\beta} & {X_\beta}
	\arrow[hook, from=1-1, to=2-1]
	\arrow[hook, from=1-2, to=2-2]
	\arrow["{S_\alpha(t)}", from=1-1, to=1-2]
	\arrow["{S_\beta(t)}", from=2-1, to=2-2]
\end{tikzcd}\]
and for all $\omega>\omega_S$ there is a constant $C$ depending on $\alpha$ and $\beta$ such that
\begin{align*}
   \|S_\beta(t)\|_{\mathcal{B}(X_\beta,X_\alpha)} \leq C t^{\beta-\alpha} \textnormal{e}^{\omega t}\quad \forall t>0.
\end{align*}

\end{theorem}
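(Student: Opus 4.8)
The plan is to establish the statement one level at a time in the interpolation--extrapolation scale, following the construction in \cite{amann1995linear}. First I would treat the integer levels $\alpha=k\in\mathbb{N}_0$. For $x\in X_k=D(A^k)$ the operators $A$ and $S(t)$ commute, so $S(t)x\in D(A^k)$ and $A^kS(t)x=S(t)A^kx$, whence
\[
\|S(t)x\|_{X_k}=\|A^kS(t)x\|_X=\|S(t)A^kx\|_X\le M\textnormal{e}^{\omega t}\|x\|_{X_k}.
\]
Thus $S(t)$ restricts to a bounded operator $S_k(t)$ on $X_k$, the semigroup law and strong continuity are inherited, and the growth bound is at most $\omega_S$; equality of growth bounds follows from the fact that all the scale operators share the same spectrum (and in the holomorphic case relevant here the growth bound equals the spectral bound). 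The generator of $S_k(t)$ is the part of $A$ in $X_k$. For the negative level, $X_{-1}$ is the completion of $X$ in the norm $\|x\|_{X_{-1}}=\|A^{-1}x\|_X$, and the identical computation $\|S(t)x\|_{X_{-1}}=\|S(t)A^{-1}x\|_X\le M\textnormal{e}^{\omega t}\|x\|_{X_{-1}}$ shows that $S(t)$ extends uniquely to a $C_0$-semigroup $S_{-1}(t)$ on $X_{-1}$ with the same growth bound, generated by the extrapolated operator $A^{-1}\colon X\to X_{-1}$.

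Next I would pass to the fractional levels $\alpha=k+\theta$, $\theta\in(0,1)$, where $X_\alpha=(X_k,X_{k+1})_\theta$ for the fixed exact admissible interpolation functor. Since $S_k(t)$ is bounded on $X_k$ and, by restriction, on $X_{k+1}$ with operator norm at most $M\textnormal{e}^{\omega t}$ in both cases, exactness of the functor yields a bounded operator $S_\alpha(t)$ on $X_\alpha$ with $\|S_\alpha(t)\|_{\mathcal{B}(X_\alpha)}\le M\textnormal{e}^{\omega t}$; strong continuity on $X_\alpha$ then follows from density of $X_{k+1}$ in $X_\alpha$ (admissibility) together with this local uniform bound, and the generator is $A^\alpha\colon X_{\alpha+1}\to X_\alpha$. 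Commutativity of the diagram is a consistency statement: for $\alpha\ge\beta$ and $x\in X_\alpha$ the families $S_\alpha(t)$ and $S_\beta(t)$ are, by construction, restrictions or extensions of one and the same $S(t)$, so they coincide on a dense subspace (e.g.\ $X_{\lceil\alpha\rceil}$) and hence $\iota_{\alpha\beta}\,S_\alpha(t)=S_\beta(t)\,\iota_{\alpha\beta}$ for the inclusion $\iota_{\alpha\beta}\colon X_\alpha\hookrightarrow X_\beta$.

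The genuinely delicate point, and the one I expect to be the main obstacle, is the smoothing estimate $\|S_\beta(t)\|_{\mathcal{B}(X_\beta,X_\alpha)}\le Ct^{\beta-\alpha}\textnormal{e}^{\omega t}$ for $\alpha\ge\beta$ and $t>0$: it asserts that the semigroup maps $X_\beta$ into the strictly smaller space $X_\alpha$ for positive times with a quantitative gain, which is false for a merely strongly continuous semigroup. In the setting used throughout the paper $(S(t))_{t\ge0}$ is holomorphic, so $S(t)\colon X\to D(A)$ for $t>0$ with $\|AS(t)\|_{\mathcal{B}(X)}\le Ct^{-1}\textnormal{e}^{\omega t}$; shifting this through the scale gives $\|S_\beta(t)\|_{\mathcal{B}(X_\beta,X_{\beta+1})}\le Ct^{-1}\textnormal{e}^{\omega t}$, and interpolating against $\|S_\beta(t)\|_{\mathcal{B}(X_\beta)}\le M\textnormal{e}^{\omega t}$ produces the claimed bound for $\beta\le\alpha\le\beta+1$ with exponent $t^{\beta-\alpha}$; the case $\alpha>\beta+1$ then follows from $S_\beta(t)=S_\beta(t/n)^n$ by distributing the gain over the $n$ factors. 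This is precisely the dichotomy recorded in Section~\ref{sect 3.1}, where $\gamma\in(0,1]$ when the semigroup is holomorphic and $\gamma=1$ otherwise, the latter meaning a lift by at most one full level with no negative power of $t$. Since each of these steps is contained in Amann's treatment of interpolation--extrapolation scales, the cleanest write-up records the commutation identity and the interpolation step above and refers to \cite{amann1995linear} for the remaining technical details.
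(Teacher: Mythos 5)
The paper does not prove this statement at all: it appears in Appendix~\ref{appendix A} as part of a background summary whose proofs are delegated wholesale to \cite{amann1995linear} and \cite{lunardi2012analytic}, so there is no in-paper argument to compare against. Your sketch is the standard construction from Amann's treatment and is essentially correct: the commutation $A^kS(t)x=S(t)A^kx$ on $D(A^k)$ handles the integer levels, the density argument handles $X_{-1}$, exactness of the admissible functor transports the bound $M\textnormal{e}^{\omega t}$ to the fractional levels (with the reiteration identity $(X_\beta,X_{\beta+1})_\theta=X_{\beta+\theta}$ doing silent work there), and consistency on dense subspaces gives the commuting diagram. Two refinements are worth recording. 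First, for the equality of growth bounds you do not need the detour through spectra and the spectral-bound/growth-bound identity (which is false for general $C_0$-semigroups): since $\|x\|_{X_k}=\|A^kx\|_X$, the operator $A^k\colon X_k\to X$ is an isometric isomorphism and $S_k(t)=A^{-k}S(t)A^k$, so $\|S_k(t)\|_{\mathcal{B}(X_k)}=\|S(t)\|_{\mathcal{B}(X)}$ exactly. Second, your observation about the smoothing estimate is the substantive point: as literally stated, with $A$ only a generator of a $C_0$-semigroup, the bound $\|S_\beta(t)\|_{\mathcal{B}(X_\beta,X_\alpha)}\leq Ct^{\beta-\alpha}\textnormal{e}^{\omega t}$ for $\alpha>\beta$ is false (a translation semigroup does not regularize), and holomorphy must be assumed; your derivation via $\|AS(t)\|_{\mathcal{B}(X)}\leq Ct^{-1}\textnormal{e}^{\omega t}$, interpolation for $\alpha-\beta\leq 1$, and the factorization $S_\beta(t)=S_\beta(t/n)^n$ for larger gaps is the right argument and makes explicit a hypothesis the appendix statement leaves implicit (consistent with the paper's convention that $\gamma=1$ and no negative power of $t$ appears in the non-holomorphic case).
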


\section{A different approach in controlling the fast-reaction term} \label{Appendix B}

In this section we present an alternative approach in constructing the slow manifold.
We want to emphasize however that this is only a sketch of the approach and that the full development of this approach needs yet to be done.

The idea to the this approach comes form studying a finite dimensional fast-slow ODE.
To this end, we study the following planar system
\begin{align}
    \begin{split}
        \frac{\textnormal{d}}{\textnormal{d} t}\begin{pmatrix}u^\varepsilon\\v^\varepsilon \end{pmatrix}= \begin{pmatrix} -k^2 &0\\0&-1 \end{pmatrix} \begin{pmatrix} u^\varepsilon\\v^\varepsilon\end{pmatrix} +\begin{pmatrix} \frac{1}{\varepsilon} f(u^\varepsilon,v^\varepsilon)\\ 0\end{pmatrix},
    \end{split}
\end{align}
where $k > 0$ is a fixed constant. In this example let $f(u,v)= -u+v^2$.
Then, the ODE system has the explicit solution
\begin{align}
u^\varepsilon(t)=u_0 \textnormal{e}^{-t(k^2+\varepsilon^{-1})} + \frac{(v_0)^2 \textnormal{e}^{-2t}} {(-2+k^2+\varepsilon^{-1})\varepsilon}, \qquad v^\varepsilon(t)=v_0 \textnormal{e}^{-t}.
\end{align}
The limit system as $\varepsilon\to 0$, which reads
\begin{align}
     \frac{\textnormal{d}}{\textnormal{d} t} v^0=-v^0, 
\end{align}
defined on the critical manifold $S_0=\{(u,v)\in \mathbb{R}^2: u=v^2\}$, has the solution $v^0=v_0 \textnormal{e}^{-t}$.\\

Then, we can observe a different behavior of the solution in $u$ for different ratios between $k^2$ and $\varepsilon^{-1}$.
Assuming that $k^2\ll \varepsilon^{-1}$ and the initial condition satisfy $u_0=(v_0)^2$, then, the fast-reaction term is dominating and we obtain that $u^\varepsilon(t)= v^\varepsilon(t)^2 +\mathcal{O}(\varepsilon)$.
This implies that in this regime for $t>0$ we have
\begin{align*}
    f(u^\varepsilon(t),v^\varepsilon(t))&= (v_0)^2\bigg(\textnormal{e}^{-t(k^2+\varepsilon^{-1})} - \frac{(-2\varepsilon+k^2\varepsilon)\textnormal{e}^{-2t}} {-2\varepsilon+k^2\varepsilon+1} \bigg)\\
    &\leq \varepsilon C (v_0)^2 \textnormal{e}^{-2t} = \varepsilon C h^0(v^0(t)),
\end{align*}
where $h^0(v)= v^2$.This estimate will the be key to compensate the $\varepsilon^{-1}$-term coming from the fast-reaction.

In the other regime, when $k^2\geq \varepsilon^{-1}$, we observe that the linear (diffusion) term dominates and we have that $u^\varepsilon(t)$ decays to zero much faster than $v^\varepsilon(t)$ as $t$ increases.\\

The next step is to generalize this idea to the PDE setting, where we consider a simplified fast-reaction equation of the form
\begin{align}\label{appendix problem}
\begin{split}
    \partial_t u^\varepsilon &= A u^\varepsilon +\frac{1}{\varepsilon} f(u^\varepsilon, v^\varepsilon),\\
    u^\varepsilon(0)&= u_0,
    \end{split}
\end{align}
where $v^\varepsilon\in \mathbb{R}$ is a parameter depending on $\varepsilon$.
The corresponding limit problem is 
\begin{align}
    0= f(u^0,v^0).
\end{align}
In order for this problem to be well-defined, we assume that $f$ satisfies the assumptions of the implicit function theorem, such that there exists a function $h^0:\mathbb{R}\to X_1$ with $f(h^0(v^0),v^0)=0$. As before we further assume that the spectrum of $\textnormal{D}_x f(x,y)$ is located on the left half-plane.\\

Motivated by the finite-dimensional ODE example we introduce a splitting of the Banach space $X$ of the form
\begin{align*}
    X=X^\xi_F \oplus X^\xi_S
\end{align*}
into a fast part $X^\xi_F$, i.e. the case where the linear operator dominates the effect of the nonlinearity, and a slow part $X^\xi_S$, i.e. the case where the fast reaction term drives the flow of the solution.
Here $\xi>0$ is a small parameter encoding the splitting of the Banach space and we assume that the following holds
 \begin{itemize}
    \item The spaces $X^\xi_F$ and $X^\xi_S$ are closed in $X$ and the projections $\pr_{X^\xi_F}$ and $\pr_{X^\xi_S}$ commute with $A$ on $X_1$.
    \item The spaces $X^\xi_S\cap X_1$ and $X^\xi_F\cap X_1$ are closed subspaces in $X$ and will be endowed with the norm $\|\cdot\|_{X_1}$.
    \item The realization of the operator $A$ in $X^\xi_S$ is given by 
         $A_{X^\xi_S}: D(X^\xi_S)\subset X^\xi_S \to X^\xi_S$
    with 
    $$D(A_{X^\xi_S})=\{u\in X^\xi_S\cap D(A):\, Au\in X^\xi_S\}$$
    and similar for the realization of $A$ in $X^\xi_F$.
 \end{itemize}
 In addition, we make the following assumptions on the nonlinearity and the splitting based on the previous observations.
 \begin{itemize}
      \item The space $X^\xi_F$ contains the parts of $X_1$ that decay very fast under the semigroup $(\textnormal{e}^{tA})_{t\geq 0}$ satisfying the following estimate
    \begin{align*}
        \|\textnormal{e}^{tA} x_F\|_{X_1}\leq M_A \textnormal{e}^{\varepsilon^{-1} \omega_A t} \|x_F\|_{X_1},
    \end{align*}
  where we observe a much faster decay when compared with the general estimate.
    \item The nonlinearity $f$ satisfies
    \begin{align*}
        \|\pr_{X^\xi_S} [f(u_1,v_1)-f( u_2,v_2)]\|_{X_1}\leq   \varepsilon L_f\big(\|u_1-u_2\|_{X_1}+ |v_1-v_2|\big),
    \end{align*}
    where $\pr_{X^\xi_S}$ denotes the projection onto $X^\xi_S$ and $u_1,u_2\in X_1$ are solutions to \eqref{appendix problem} with parameters $v_1$ and $v_2$ respectively.
    \item We require the initial data under this splitting to satisfy 
    \begin{align*}
         \pr_{X^\xi_S} [u_0-h^0(v^0)]=0.
    \end{align*}
 \end{itemize}
Next, we show how to apply this method to equation \eqref{appendix problem}, when studying the convergence of $u^\varepsilon \to h^0(v^0)$ in $X_1$ as $\varepsilon \to 0$.
To simplify the computations, we assume that, as before in this section, the nonlinearity $f$ is given by $f(u,v)= -u+v^2$.
Then, setting $w^\varepsilon=u^\varepsilon-h^0(v^0)$ we obtain 
\begin{align*}
    \|w^\varepsilon(t)\|_{X_1} &= \bigg\|  \textnormal{e}^{At} w_0 +\varepsilon^{-1} \int_0^t \textnormal{e}^{A(t-s)} f(w^\varepsilon(s)+h^0(v^0),v^\varepsilon)\,\textnormal{d}s  \bigg\|_{X_1}\\
    &\leq M_A \int_0^t \bigg(\textnormal{e}^{\omega_A (t-s)} +\varepsilon^{-1} \textnormal{e}^{\varepsilon^{-1}\omega_A (t-s)}\bigg) \big( \|w^\varepsilon(s)\|_{X_1}+|h^0(v^0)-(v^\varepsilon)^2|\big)\,\textnormal{d}s \\
    &\quad + M_A \textnormal{e}^{\varepsilon^{-1}\omega_A t} \| \pr_{X^\xi_F} w_0\|_{X_1},
\end{align*}
where we used the splitting of the space $X$ to control the $\varepsilon^{-1}$-terms. Noting that $h^0(v^0)=(v^0)^2$ we obtain 
\begin{align*}
     \|w^\varepsilon(t)\|_{X_1} &\leq M_A \textnormal{e}^{\varepsilon^{-1}\omega_A t} \| \pr_{X^\xi_F} w_0\|_{X_1}+ M_A |\omega_A^{-1}| \big|(v^0)^2-(v^\varepsilon)^2\big| \\
     &\quad + M_A \int_0^t \bigg(\textnormal{e}^{\omega_A (t-s)} +\varepsilon^{-1} \textnormal{e}^{\varepsilon^{-1}\omega_A (t-s)}\bigg)  \|w^\varepsilon(s)\|_{X_1}\,\textnormal{d}s.
\end{align*}
This then yields
\begin{align*}
    \|w^\varepsilon(t)\|_{X_1} \leq C \big(\textnormal{e}^{\varepsilon^{-1}\omega_A t} \| \pr_{X^\xi_F} w_0\|_{X_1}+  \big|(v^0)^2-(v^\varepsilon)^2\big|\big)
\end{align*}
As the parameter $v^\varepsilon\to v^0$ as $\varepsilon\to 0$ we have $w^\varepsilon \to 0$, which shows the convergence of solutions.

\begin{remark}
    We want to point out that in this example we did not have to assume any global Lipschitz assumptions on the nonlinearity. However to compensate this lack of global control, more information on the specific problem at hand is required. 
    This reason makes the application of this method to general problems more difficult and we leave this to future projects to investigate.
\end{remark}





\end{appendix}

\section*{Acknowledgements}
The authors would like to thank F. Hummel, B.Q. Tang and B.-N. Tran for the fruitful discussions. 
Furthermore, the authors acknowledge the support of the DFG under grant No.$456754695$.
\printbibliography
\end{document}